\setlist[itemize]{leftmargin=*}
\theoremstyle{plain}
\newtheorem{theorem}{Theorem}[section]
\newtheorem{lemma}[theorem]{Lemma}
\newtheorem{corollary}[theorem]{Corollary}
\newtheorem{proposition}[theorem]{Proposition}
\newtheorem{assumption}[theorem]{Assumption}
\newtheorem{conjecture}[theorem]{Open Problem}
\newtheorem{definition}[theorem]{Definition}
\theoremstyle{remark}
\newtheorem{remark}[theorem]{Remark}
\newtheorem{example}{Example}[subsection]
\newtheorem{problem}[theorem]{Open Problem}
\numberwithin{equation}{section}
\title[Generalized Elephant Random Walks]{Asymptotic Properties of \\ Generalized Elephant Random Walks} 
\author[K. Maulik]{Krishanu Maulik}
\address{
    Theoretical Statistics and Mathematics Unit \\
    Indian Statistical Institute \\
    203 B. T. Road, Kolkata 700108 \\
    West Bengal, India 
}
\email{kmisical@gmail.com}
\author[P. Roy]{Parthanil Roy}
\address{
    Department of Mathematics \\
   Indian Institute of Technology Bombay \\
    Maharashtra 400076 \\
    India 
}
\email{parthanil.roy@gmail.com}
\author[T. Sadhukhan]{Tamojit Sadhukhan}
\address{
    Theoretical Statistics and Mathematics Unit  \\
   Indian Statistical Institute \\
    203 B. T. Road, Kolkata 700108 \\
    West Bengal, India
}
\email{tamojit96sadhukhan@gmail.com}
\thanks{The first author is partially supported by Matrics grant no MTR/2019/001448 from SERB, Govt of India. The second author is partially supported by a DST SwarnaJayanti Fellowship. The third author is partially supported by an IMU Breakout Graduate Fellowship.}
\begin{document}

\begin{abstract}
Elephant random walk is a special type of random walk that incorporates the memory of the past to determine its future steps. The probability of this walk taking a particular step ($+1$ or $-1$) at a time point, conditioned on the entire history, depends on a linear function of the proportion of steps of that type till that time point. {In this work, we consider a generalization of the elephant random walk where we investigate how the dynamics of the random walk will change if we replace this linear function with a generic map satisfying some analytic conditions. We propose a new model, called the multidimensional generalized elephant random walk, that includes several variants of elephant random walk in one and higher dimensions and generalizations thereof.} Using tools from the theory of stochastic approximation, we derive the asymptotic behavior of our model leading to newer results on the phase transition boundary between diffusive and {non-diffusive} regimes. In the process, we extend some results on one-dimensional stochastic approximation process, which can be of independent interest. We also mention a few open problems in this context.
\end{abstract}

\keywords{Elephant random walk ; phase transition ; anomalous diffusion ; time-inhomogeneous Markov chain ;  stochastic approximation ;  law of large numbers ;  central limit theorem ;  law of iterated logarithms ; recurrence and transience ; Jacobian matrix ; Jordan decomposition}

\subjclass[2020]{Primary: 60K50; Secondary: 60F05, 60F15, 82B26}

\maketitle

\section{Introduction}\label{sec:intro}

Random walk models find many applications in physics, biology, neuroscience, computer science, econometrics, etc. Among various random walk models, the simple random walk with steps independent of each other, has been studied extensively. Exhibiting diffusive behavior, it grows at a rate equal to the square root of the number of steps taken. However, anomalous diffusion appears in many physical, biological, or social systems whose analysis often requires theoretical models that include the memory of the process. Elephant random walk, as a random walk model with memory, was introduced in \cite{erw1} to study the effect of memory on random walks. Unlike the simple random walk, it exhibits anomalous diffusion due to the incorporation of the memory of its entire history. The elephant random walk has garnered considerable attention in the last two decades; see, e.g. \cite{Bercu_2018, bertoin2022counting, 10.1063/1.4983566, Coletti_2017, Coletti_2021}. 

Like the simple random walk, the elephant random walk is a one-dimensional discrete-time random walk in which the walker moves along the integer line $\mathbb{Z}$, one step ($\pm 1$) at a time. The walker in the elephant random walk is also referred to as an elephant as it retains and uses the memory of the entire history of the walk. The walk begins from the origin. At epoch 1, the elephant moves to the right ($+1$) with probability $q$ or to the left ($-1$) with probability $1 - q$, for some $0 < q < 1$. Subsequently, for all future epochs, it chooses a step uniformly at random from the previous steps and then repeats it with probability $p$, or moves in the direction opposite to the chosen past step with probability $1-p$. Here, $p \in (0,1)$ is the important parameter that indicates the strength of the memory of the elephant and $q \in (0,1)$ is the other (not so relevant) parameter that specifies its initial distribution. An important question regarding the elephant random walk pertains to the impact of the memory parameter $p$ on the asymptotic behavior of the model. Depending on the value of $p$, it exhibits distinct behavior and undergoes a phase transition at $p = 3/4$, where it changes from diffusive $(p < 3/4)$ to superdiffusive $(p \geq 3/4)$ growth due to the effect of memory. 

{Even though the process uses the history from the entire past,} there is a time-inhomogeneous Markovian structure inherent in the dynamics of the elephant random walk. The probability that the next step of the elephant is $+1$ (respectively,~$-1$), conditioned on the past, is a linear function of the current proportion of $+1$ (respectively,~$-1$) steps. Because of the nature of its propagation, the elephant random walk is ideal for modeling scenarios in which a series of actions are taken dynamically, among two (or more) choices of actions, such that the next action depends on the relative frequencies of each of the ones taken in the past. For example, a customer wants to buy a product over other competing products based on how often the product was purchased in the past, or YouTube suggests a video to the user based on how many times the user watched similar videos in the past, etc. 

However, one limitation of using the elephant random walk to model such situations is that the relative frequencies of the actions in the past may not be observable quantities. Instead, one can readily observe the values of a possibly nonlinear function of the same. For example, how frequently a product has been purchased in the past (subsequently, we refer to the relative frequency of past purchases of a product as the \emph{market share} of that product) is usually not known to the customers, but the current price of the product, which can be viewed as a nonlinear function of the present market share, is always known and can be used to select one of the products; see Section~\ref{subsec:econ-ldrw} below. Motivated by this limitation, in Section~\ref{sec:ldrw}, we propose a generalization of the elephant random walk (called the one-dimensional generalized elephant random walk) where we generalize the probability of the next step being $+1$ (respectively, $-1$). Instead of linearly depending on the current proportion of $+1$ (respectively, $-1$) steps, the probability of the next step being $+1$ (respectively, $-1$) depends on a potentially nonlinear function of the same.

Several variations of elephant random walk have also been considered in recent times. For example, a unidirectional elephant random walk model (known as the minimal random walk) was introduced in \cite{harbola2014memory} (see also \cite{coletti2019limit}). Elephant random walk with random step sizes was analyzed in \cite{erwrs2} and \cite{erwrs1}. Elephant random walk has been extended to higher dimensions in \cite{bercu2019multi}. In Section~\ref{sec:gerw}, we consider similar generalizations of these variations of elephant random walk. We propose a very general random walk model whose various special cases provide the generalizations of the above-mentioned variations. We call this general random walk model the multidimensional generalized elephant random walk. It is needless to say that the one-dimensional generalized elephant random walk is also a special case of this model and its behavior is analyzed more intricately than its multidimensional counterpart leading to finer results in the one-dimensional case.

We establish a connection between the multidimensional generalized elephant random walk and a class of recursive algorithms, known as stochastic approximation, for determining its asymptotic behavior. Stochastic approximation was originally introduced by \cite{robbins1951stochastic} 
as a method of finding the root of an unknown function in presence of random noise. 
Since then, this tool has become quite popular with wide applications in various areas including but not limited to econometrics, clinical trials, queuing networks, wireless communications, manufacturing systems, neural nets, etc. A survey of stochastic approximation algorithms and their applications can be found in \cite{harold1997stochastic}.

In summary, our main contributions are the following. We obtain a single random walk model, namely the multidimensional generalized elephant random walk, that brings the elephant random walk and many of its one and multidimensional variants and their extensions (including the generalizations proposed by us) under the same umbrella. {Borrowing tools and techniques from the theory of stochastic approximation, we establish almost sure convergence of the scaled location of the random walk (see Theorems~\ref{thm:slln},~\ref{thm:slln-gerw}) and study the phase transition %
of the fluctuations of the scaled location around its almost sure limit (see Theorems~\ref{thm:clt},~\ref{thm:clt-gerw}).} With the help of a detailed analysis, we also derive the law of iterated logarithm in the one-dimensional case (see Theorems~\ref{thm:lil},~\ref{thm:lil-gerw-cric}), which enables us to investigate the recurrence and transience of the one-dimensional generalized elephant random walk (see Proposition~\ref{prop:tra-rec}). Further, in this case, we provide an expansion of the scaled location of the walk around its almost sure limit in the superdiffusive regime. The order of the expansion depends on the smoothness of the underlying function, giving the probability of selecting a particular type of past step (see Theorems~\ref{thm:super-dev1},~\ref{thm:super-dev1m}). {We also give detailed proofs for the higher order expansions of the one-dimensional stochastic approximation process and provide expressions for the coefficients in the expansion (see Theorems~\ref{thm:A1},~\ref{thm:A2}).} We provide several interesting and illustrative examples (see Section~\ref{subsec:examples-ldrw}), which can be of independent interest. We also propose a bunch of conjectures and potential future directions (see Open Problems~\ref{conj1},~\ref{conj2},~\ref{conj3}).

The rest of the paper is organized as follows. In Section~\ref{sec:ldrw}, we describe the one-dimensional generalized elephant random walk model along with its possible applications and present the main results for this model. We formally define the multidimensional generalized elephant random walk in Section~\ref{sec:gerw} along with important examples and statements of the main theoretical results for the same. Section~\ref {sec:sa} discusses stochastic approximation procedures and their connections to our models, as well as all the supporting results and their proofs. Finally, the proofs of the main theoretical results are given in Section~\ref{sec:proofs}. 

\section{One-dimensional Generalized Elephant Random Walk}\label{sec:ldrw}

In this section, we consider the one-dimensional generalized elephant random walk introduced in Section~\ref{sec:intro}. Section~\ref{subsec:model-ldrw} depicts the dynamics of the 
random walk. In Section~\ref{subsec:econ-ldrw}, an application motivating the model is illustrated. One-dimensional generalized elephant random walk can be connected to some other existing processes in the literature, which we describe in Section~\ref{subsec:connection}. In Section~\ref{subsec:results-ldrw}, we describe the main results for this model. We conclude Section~\ref{sec:ldrw} by providing some interesting examples in Section~\ref{subsec:examples-ldrw}. 

We begin by recalling the formal description of the elephant random walk. Fix $p,q \in (0,1)$. Let $S_n$ denote the location of the walker (i.e. the elephant) at time $n \geq 0$. Then $S_0 = 0$ and for $n\geq 0$, 
\[
    S_{n+1} = S_{n} + X_{n+1},
\]
where, for each $n \geq 0$, $X_{n+1}$ is the $(n+1)$-th step of the elephant defined as follows:
\begin{align}\label{eq00}
           X_1 = \begin{cases}
               + 1 &\text{ with probability } q, \\ 
               - 1 &\text{ with probability } 1-q,
           \end{cases}
\end{align}
and for $n \geq 1$, with $U_{n+1} \sim$ Uniform$\{1, \ldots, n\}$ independent of $\{X_1, \ldots, X_n\}$,
\begin{align}\label{eq01}
           X_{n+1} = \begin{cases}
               + X_{U_{n+1}} &\text{ with probability } p, \\ 
               - X_{U_{n+1}} &\text{ with probability } 1-p.
           \end{cases}
\end{align}

The walk has the following Markovian structure. Given $n \geq 1$, let $V_n$ (respectively, $W_n$) be the number of $+1$ (respectively, $-1$) steps till time $n$. In other words, \[{V_n} := \#\left\{1 \leq i \leq n : X_i = +1\right\}, \quad {W_n} := \#\left\{1 \leq i \leq n : X_i = -1\right\} = n - V_n.\] At time $n+1$, the probability of selecting $+1$ (respectively, $-1$) step from the previous steps is $V_n/n$ (respectively, $W_n/n$). Thus for $n \geq 1$, given $V_1, \ldots, V_n$,
\begin{align}\label{eq4}
           V_{n+1} &= \begin{cases}
              V_n+1 &\text{ with probability } p{\frac{V_n}{n}} + (1-p)\left(1-{\frac{V_n}{n}}\right),\\ 
              V_n &\text{ with probability } p{\left(1-{\frac{V_n}{n}}\right)} + (1-p){\frac{V_n}{n}},
           \end{cases}\\ \text{ and } \hspace{1cm} W_{n+1} &= n+1 - V_{n+1}. \nonumber
\end{align}
This implies that $\{V_n\}_{n\geq1}$ is a time-inhomogeneous Markov chain 
and for $n \geq 1$, \begin{align}\label{neweq:1}\mathbb{P}\left(X_{n+1} = +1 \mid X_1, \ldots, X_n\right) = p{\frac{V_n}{n}} + (1-p)\left(1-{\frac{V_n}{n}}\right).\end{align} Our model replaces the term ${V_n}/{n}$ in \eqref{neweq:1} by a function of the same.

\subsection{The Model}\label{subsec:model-ldrw}

As before, fix $p, q \in (0,1)$. The walk starts from $S_0 = 0$ at time $n = 0$. For $n\geq 0$, the location $S_{n+1}$ of the elephant at time $n+1$ is given by,
\[
    S_{n+1} = S_{n} + X_{n+1},
\]
where, %
\begin{align*}
           X_1 = \begin{cases}
               + 1 &\text{ with probability } q, \\ 
               - 1 &\text{ with probability } 1-q,
           \end{cases}
\end{align*}
and for $n \geq 1$, %
given $X_1, \ldots, X_n$,
\begin{align}\label{eq2}
           X_{n+1} = \begin{cases}
              + 1 &\text{ with probability } p{f\left(\frac{V_n}{n}\right)} + (1-p)\left(1-{f\left(\frac{V_n}{n}\right)}\right),\\ 
              - 1 &\text{ with probability } p\left(1-{f\left(\frac{V_n}{n}\right)}\right) + (1-p){f\left(\frac{V_n}{n}\right)},
           \end{cases}
\end{align}
where $f : [0,1] \to [0,1]$ and ${V_n} := \#\left\{1 \leq i \leq n : X_i = +1\right\}$ as above. We interpret \eqref{eq2} in the following way. At time $n + 1$, $n \geq 1$, the elephant chooses a step from the set of possible steps $\{-1,+1\}$ as follows. Unlike the classical elephant random walk where it chooses the step uniformly from the previous steps, the chosen step is $+1$ (respectively, $-1$) with probability $f(V_n/n)$ (respectively, $1 - f(V_n/n)$). Then, the elephant {repeats the chosen step with probability $p$, or moves in the opposite direction with probability $1-p$}. This model reduces to the elephant random walk for $f(x) = x$.  
Note that, though the probability of choosing $+1$ step at epoch $n+1$ is $f(V_n/n)$, the same for $-1$ step, in general, is not equal to $f(W_n/n)$. So, there is an asymmetry in the model dynamics unless 
\begin{equation}\label{eq:sym} f(x)+f(1-x) = 1.\end{equation} We call the one-dimensional generalized elephant random walk \emph{symmetric} if $f$ satisfies~\eqref{eq:sym}. 

\begin{remark}\label{rem:dual}
    Consider the one-dimensional generalized elephant random walk with $f$ replaced by $f^*$, given by, $f^*(x) = 1-f(x)$ and $p$ replaced by $p^* = 1-p$. Then, the dynamics of the walk remain the same. Observe that the graph of $f^*$ is the reflection of the graph of $f$ with respect to the line $y = 1/2$. Additionally, if $f$ satisfies \eqref{eq:sym}, then $f^*(x) = f(1-x)$ and the graph of $f^*$ is the reflection of the graph of $f$ with respect to the point $(1/2, 1/2)$.
\end{remark}

\subsection{An Illustrative Application}\label{subsec:econ-ldrw}

As mentioned earlier, the one-dimensional generalized elephant random walk is useful for modeling situations where actions are taken dynamically, depending on past actions. A typical example of such a situation is the following. Consider a market with two competing brands $D$ and $S$ of the same product and a customer can choose between them. We want to know the difference between the market shares of $D$ and $S$ in the long run (recall that the market share of a brand is the relative frequency of purchases of that brand in the past). {One popular model for analyzing such markets is the Theory of Increasing Returns (see \cite{ldpir} and the references therein). %
Here we present a related but different model.} Let us denote the price of $D$ (respectively, $S$) by $\pi_D$ (respectively, $\pi_S$). We assume that the price of a brand is determined by its demand or the corresponding market share. %
If $x$ denotes the market share of $D$ at a given time (observe that $0 \leq x \leq 1)$, the market share of $S$ at that time is $1-x$. Thus, at that time, the price of $D$ (respectively, $S$) is $\pi_D(x)$ (respectively, $\pi_S(1-x)$). The customers initially decide which brand to buy depending on the difference between the prices of $D$ and $S$, namely, $\pi_D(x) - \pi_S(1-x)$. The exact dependence structure varies from market to market. For illustration, we use the following. If $\pi_D(x) - \pi_S(1-x)$ is less than a threshold $L < 0$ (respectively, more than a threshold $U > 0$), the customers initially prefer $D$ (respectively, $S$), as the price of $D$ is significantly less (respectively, greater) than $S$. If $\pi_D(x) - \pi_S(1-x) \in (L, U)$, i.e., when the prices are not much different, the customers follow a randomized rule and initially prefer $D$ if and only if $\pi_D(x) - \pi_S(1-x) < X$, where, $X$ is a Uniformly distributed random variable on $(L, U)$. But, other factors (e.g. tempting advertisements or discounts) may affect this initial preference and so, ultimately, the customers buy the initially preferred brand (respectively, the other brand) with probability $p$ (respectively, with probability $1-p$). The correspondence between the customers' decisions and the steps of the random walk is straightforward. For $n \geq 0$, if $D$ (respectively, $S$) is purchased at time $n+1$, we set $X_{n+1} = 1$ (respectively, $X_{n+1} = -1$). Then, $S_n = \sum_{i=1}^n X_i$ represents the market dominance of $D$ over $S$ at time $n$. It follows that $S_n$ is a one-dimensional generalized elephant random walk with the corresponding $f$ given by,
\begin{align*}
       f(x) = \begin{cases}
       1, & \pi_D(x) - \pi_S(1-x) \leq L, \\
       \frac{U - \pi_D(x) + \pi_S(1-x)}{U - L} & \pi_D(x) - \pi_S(1-x) \in (L,U), \\
       0, & \pi_D(x) - \pi_S(1-x) \geq U.
       \end{cases}
\end{align*}
In Section~\ref{subsec:examples-ldrw} (see Example~\ref{geg-0}), we illustrate the long-run behavior of such a market. We emphasize that the above situation can not be modeled using the classical elephant random walk. {We would like to mention that the Theory of Increasing Returns can also be modeled similarly as a one-dimensional generalized elephant random walk (see, for example, \cite{franchini} and Section~\ref{sss:urn}).} %

\subsection{Connections with Other Processes}\label{subsec:connection}

The one-dimensional generalized elephant random walk can be represented in terms of various other processes existing in the literature. 

\subsubsection{Generalized Urn Process}\label{sss:urn}

A connection of the elephant random walk to the Urn Model has been established by \cite{baur2016elephant} (see also \cite{MR2413290}). A similar one-to-one correspondence exists between the one-dimensional generalized elephant random walk and the generalized urn process, which was introduced in \cite{hill1980strong}. The generalized urn process describes the discrete-time evolution of an urn containing balls of two colors, say, red and black. The urn composition at time $n \geq 1$ is denoted by $U_n = (R_n, B_n)$, where $R_n$ and $B_n$ are the numbers of red balls and black balls, respectively, at that time. We start with an empty urn. At time $n=1$, a red ball is added to the urn with probability $q$, or a black ball is added to the urn with probability $1 - q$. At time $n \geq 2$, a red ball (respectively, a black ball) is drawn from the urn with probability $f(R_n/n)$ (respectively, $1 - f(R_n/n))$. Then the drawn ball is returned to the urn along with another ball of the same color with probability $p$ or a ball of the opposite color with probability $1 - p$. The following relation describes the above-mentioned one-to-one correspondence \[(S_n : n \geq 1) \stackrel{d}{=} (R_n - B_n : n \geq 1).\]
{When the walk is symmetric (i.e. $f$ satisfies~\eqref{eq:sym}), the corresponding generalized urn process also becomes symmetric (see, for example, \cite{sym} for symmetric generalized urn process).} 
\subsubsection{Location-dependent Random Walk}\label{sss:rwre}

The dynamics of the one-dimensional generalized elephant random walk can also be described as a location-dependent random walk in the sense that the next location of the elephant depends on its current (time-normalized) location. For $n \geq 1$, as $S_n = 2V_n - n$, we can rewrite~\eqref{eq2} as: 
\begin{align}\label{eq1}
           S_{n+1} = \begin{cases}
             S_n + 1 &\text{ with probability } \frac{1}{2} + \left(p-\frac{1}{2}\right)g\left({\frac{S_{n}}{n}}\right),\\ 
              S_n - 1 &\text{ with probability } \frac{1}{2} - \left(p-\frac{1}{2}\right)g\left({\frac{S_{n}}{n}}\right),
           \end{cases}
\end{align}
where $g:[-1,1] \to [-1,1]$ is given by \begin{align}\label{eq:gf}g(x) = 2f\left(\frac{x+1}{2}\right) - 1.\end{align} This puts one-dimensional generalized elephant random walk in one-to-one correspondence with this location-dependent random walk and we shall study them interchangeably. Note that $f$ satisfies \eqref{eq:sym} if and only if $g$ is an odd function, as evident from \eqref{eq:gf}. %

\subsubsection{Sequence of Dependent Bernoulli Random Variables}\label{subsec:bernoulli}

The sequence $\left(Y_n\right)_{n\geq 1}$, given by, $Y_1 = V_1$, $Y_n = V_n - V_{n-1}$, $n \geq 2$, is a sequence of dependent Bernoulli variables. By rewriting~\eqref{eq2}, the evolution of this sequence is given by, for $n \geq 1$,
\begin{align*}\mathbb{P}\left(Y_{n+1} = +1 \mid Y_1, \ldots, Y_n \right) = (1-p) + (2p-1)f\left({\frac{1}{n}}\sum_{i=1}^n Y_i\right).\end{align*}
For a discussion on such sequences of Bernoulli variables, see \cite{wu2012asymptotics} and the references therein.

\subsection{Main Results}\label{subsec:results-ldrw}

The function $f$, and equivalently $g$, plays a crucial role in determining the dynamics of the walk. Define $h : [0,1] \mapsto [0,1]$, given by, \begin{align}\label{eq:h} h(x) := (1-p) + (2p-1)f(x) = \frac{1}{2}+\left(p-\frac{1}{2}\right)g(2x-1). \end{align} From $\eqref{eq:sym}$, it follows that the one-dimensional generalized elephant random walk is symmetric if and only if $h(x)+h(1-x) = 1$. Our first result gives a sufficient condition for the almost sure convergence of the walk. 
\begin{theorem}[Almost sure convergence]\label{thm:slln}
Suppose there exists unique $y_0 \in (0,1)$ such that $h(y_0) = y_0$ 
and for any closed $C \subseteq (0,1)\setminus \{y_0\}$, 
\begin{align}\label{eq3}
    \sup_{y \in C} \left\{(y-y_0)\left(h(y) - y\right)\right\} < 0.
\end{align}
Then, 
\begin{align}\label{eq:ldrw-conv}
\frac{S_n}{n} \stackrel{a.s.}{\to} s_0 := 2y_0 - 1.
\end{align}
\end{theorem}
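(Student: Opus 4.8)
The plan is to recognize that the quantity $V_n/n$ is essentially a stochastic approximation scheme and to exploit the convergence theory for such recursions. Writing $Z_n := V_n/n$, I would first derive the recursion governing $Z_n$. Since $V_{n+1} = V_n + Y_{n+1}$ where $Y_{n+1} = \mathbf{1}\{X_{n+1} = +1\}$, we have
\begin{align}\label{eq:my-recursion}
Z_{n+1} = Z_n + \frac{1}{n+1}\bigl(Y_{n+1} - Z_n\bigr).
\end{align}
Taking conditional expectation and using \eqref{eq2} together with the definition of $h$ in \eqref{eq:h}, one gets $\E[Y_{n+1} \mid X_1, \ldots, X_n] = h(Z_n)$, so that
\begin{align}\label{eq:my-drift}
\E[Z_{n+1} - Z_n \mid \mathcal{F}_n] = \frac{1}{n+1}\bigl(h(Z_n) - Z_n\bigr),
\end{align}
where $\mathcal{F}_n = \sigma(X_1, \ldots, X_n)$. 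This is precisely a Robbins--Monro stochastic approximation recursion with step size $\gamma_n = 1/(n+1)$ (satisfying $\sum \gamma_n = \infty$, $\sum \gamma_n^2 < \infty$), mean field given by the function $y \mapsto h(y) - y$, and a bounded martingale-difference noise term $\xi_{n+1} := Y_{n+1} - h(Z_n)$.

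The next step is to invoke the standard ODE method / almost sure convergence results for stochastic approximation, which I expect to be among the supporting lemmas developed in Section~\ref{sec:sa}. The associated ODE is $\dot{y} = h(y) - y$, whose equilibria are the fixed points of $h$; by hypothesis there is a unique interior fixed point $y_0$. Condition \eqref{eq3} is exactly a Lyapunov-type stability condition: it says that $h(y) - y$ has the sign of $y_0 - y$ uniformly on compact subsets of $(0,1) \setminus \{y_0\}$, so that $y_0$ is the unique globally attracting stable equilibrium on $(0,1)$. I would use $L(y) = (y - y_0)^2$ as a Lyapunov function; \eqref{eq3} guarantees $L'(y)\bigl(h(y)-y\bigr) = 2(y-y_0)(h(y)-y) < 0$ away from $y_0$, forcing the ODE trajectories toward $y_0$. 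The convergence theorem then yields $Z_n \to y_0$ almost surely, and since $S_n = 2V_n - n$ gives $S_n/n = 2Z_n - 1$, the conclusion \eqref{eq:ldrw-conv} follows immediately with $s_0 = 2y_0 - 1$.

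Two technical points require care. First, the iterates $Z_n$ must be confined to (a neighborhood of) the interval where the attraction hypothesis holds; here this is automatic because $Z_n = V_n/n \in [0,1]$ always, and the noise is bounded since $|\xi_{n+1}| \leq 1$, so the usual stability/boundedness prerequisites of the stochastic approximation theorems are met without extra work. Second, one must ensure the scheme does not get trapped away from $y_0$ near the boundary points $0$ or $1$, which could in principle be spurious equilibria of the clamped dynamics; the uniform strict-negativity in \eqref{eq3} over all closed $C \subseteq (0,1) \setminus \{y_0\}$ is what rules this out and prevents convergence to the boundary.

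The main obstacle I anticipate is not the algebra but correctly matching the hypotheses of whichever stochastic approximation convergence lemma is cited: in particular, verifying that condition \eqref{eq3}, phrased in terms of closed subsets, translates into the precise Lyapunov or attractor condition required by that lemma, and handling the possibility of convergence to the boundary of $[0,1]$. Provided the supporting lemma is stated with a global-attraction hypothesis matching \eqref{eq3}, the proof reduces to verifying \eqref{eq:my-recursion}, computing the drift \eqref{eq:my-drift}, checking the step-size and noise conditions, and quoting the lemma.
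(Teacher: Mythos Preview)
Your proposal is correct and follows essentially the same route as the paper. The paper factors the argument through the multidimensional framework (Proposition~\ref{prop:ldrw} and Theorem~\ref{thm:slln-gerw}), but at bottom it does exactly what you describe: write $\Gamma_n = V_n/n$ as a Robbins--Monro recursion~\eqref{eq-sa2}, verify the step-size and martingale-difference noise conditions (Lemma~\ref{lem-noiseanddrift2}), observe that~\eqref{eq3} is the required global-attractor condition~\eqref{eq-relaxcontpf2}, and then cite a standard stochastic approximation convergence theorem (specifically Theorem~5.1.1 of \cite{MR1082341}).
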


\begin{remark}
Note that if $h$ (equivalently, $f$ and $g$) is continuous and 
\[
   (y-y_0)\left(h(y) - y\right) < 0 \text{ for all } y \neq y_0,
\]
then \eqref{eq3} automatically holds for all closed set $C \subseteq (0,1)\setminus \{y_0\}$. In that case, the graph of $h$ crosses the diagonal at $y_0$ from above to below. {Following \cite{hill1980strong}, $y_0$ is called a \emph{downcrossing} and \eqref{eq3} is referred to as downcrossing condition.}
\end{remark}

\begin{remark}
Suppose the walk is symmetric (i.e., $f$ satisfies \eqref{eq:sym}) so that $h(1/2) = 1/2$. If \eqref{eq3} is satisfied with $y_0 = 1/2$, then \[
   \frac{S_n}{n} \stackrel{a.s.}{\to} 0.
\]
\end{remark}

\begin{remark}
    {In case of continuous $f$, Theorem~\ref{thm:slln} follows from Theorem 6.1 of \cite{hill1980strong}. However, Theorem~\ref{thm:slln} holds for a more general class of functions and we give a different proof.}
\end{remark}

If the walk almost surely converges to some non-random $s_0 \in (-1,1)$, then the second-order behavior of the walk depends on $\eta := h'((s_0+1)/2)$ (assuming $h$ to be differentiable in a neighborhood of $(s_0+1)/2$) and different behaviors are observed for different values of $\eta$. If the assumptions of Theorem~\ref{thm:slln} holds with $s_0 = 2y_0 - 1$ and $h$ is differentiable in a a neighborhood of $y_0 = (s_0+1)/2$, then, from~\eqref{eq3}, we necessarily have
\begin{align*}
    \eta = h'(y_0) & 
    = \lim_{y\to y_0} \frac{h(y) - y}{y-y_0} + 1
    = \lim_{y\to y_0} \frac{\left(y-y_0\right)\left(h(y) - y\right)}{\left(y-y_0\right)^2} + 1
    \leq 1.
\end{align*} Similar to the elephant random walk, a phase transition happens at $\eta = 1/2$. We call the regimes diffusive, critical, or supercritical accordingly as $\eta$ $<$, $=$ or $>$ $1/2$ (see Theorem~\ref{thm:clt} below). The following result describes the rate of almost sure convergence of $S_n/n$ in the diffusive and critical regimes. For the same in the supercritical regime, see Theorem~\ref{thm:super-dev1}. 
 
\begin{theorem}[Law of iterated logarithm]\label{thm:lil}
Suppose the assumptions of Theorem~\ref{thm:slln} are satisfied so that almost surely ${S_n}/{n}$ converges to some $s_0\in (-1,1)$. Let $h$ be differentiable in a neighborhood of $(s_0+1)/2$ 
with $\eta := h'((s_0+1)/2) \leq 1/2$. Then, we have the following.
\begin{enumerate}
\item[a)] Diffusive regime: if $\eta < {1}/{2}$, almost surely
\begin{align}\label{eq:lil-ldrw1}
    &\limsup_{n \to \infty}\sqrt{\frac{n}{2\log\log n}} \left(\frac{S_n}{n} - s_0\right)  \nonumber 
    \\ = - &\liminf_{n \to \infty}\sqrt{\frac{n}{2\log\log n}} \left(\frac{S_n}{n} - s_0\right) = \sqrt{\frac{1-s^2_0}{1-2\eta}}. 
\end{align}
\item[b)] Critical regime: if $\eta = {1}/{2}$ and $h$ is further twice differentiable at $(s_0+1)/2$, then almost surely
\begin{align}\label{eq:lil-ldrw2}
    &\limsup_{n \to \infty}\left(\frac{n}{2\log n\log\log\log n}\right)^{1/2} \left(\frac{S_n}{n} - s_0\right) \nonumber \\ = - &\liminf_{n \to \infty}\left(\frac{n}{2\log n\log\log\log n}\right)^{1/2} \left(\frac{S_n}{n} - s_0\right) = \sqrt{{1-s^2_0}}. 
\end{align}
\end{enumerate}
\end{theorem}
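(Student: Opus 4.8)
The plan is to realize the normalized walk as a stochastic‑approximation (Robbins–Monro) recursion and then import the known law‑of‑iterated‑logarithm (LIL) asymptotics for such recursions in both the diffusive and critical regimes. Writing $M_n := V_n/n$ (so that $S_n/n = 2M_n - 1$ and $y_0 = (s_0+1)/2$), one checks from \eqref{eq2} and the definition \eqref{eq:h} of $h$ that
\begin{align*}
M_{n+1} - M_n = \frac{1}{n+1}\Bigl(h(M_n) - M_n\Bigr) + \frac{1}{n+1}\,\xi_{n+1},
\end{align*}
where $\xi_{n+1} := Y_{n+1} - \mathbb{E}[Y_{n+1}\mid \mathcal{F}_n]$ is a bounded martingale‑difference noise with $\mathbb{E}[\xi_{n+1}\mid\mathcal{F}_n]=0$. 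This is exactly the canonical stochastic‑approximation scheme with step size $\gamma_n = 1/(n+1)$ and mean field $g_0(y) := h(y)-y$. Theorem~\ref{thm:slln} already guarantees $M_n \to y_0$ almost surely, so near the limit we may linearize: $g_0(y) = (\eta - 1)(y - y_0) + o(y-y_0)$, since $h'(y_0)=\eta$ and hence $g_0'(y_0) = \eta - 1 < 0$. The stability exponent governing the fluctuations is therefore $\eta - 1$, and the asymptotic conditional variance of the noise at the fixed point is $\operatorname{Var}(Y_\infty) = h(y_0)(1-h(y_0)) = y_0(1-y_0) = (1-s_0^2)/4$.

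First I would establish the scaling rate. In the \textbf{diffusive regime} $\eta < 1/2$, the classical LIL for Robbins–Monro recursions (which I will quote from the stochastic‑approximation lemmas developed in Section~\ref{sec:sa}) gives that $\sqrt{n}\,(M_n - y_0)$ obeys an iterated‑logarithm law with normalizer $\sqrt{2\log\log n}$ and limiting constant $\sqrt{\sigma^2/(1 - 2(1-\eta))} = \sqrt{\sigma^2/(2\eta-1)}$ — but here the relevant stability condition requires $1-\eta > 1/2$, i.e. $\eta<1/2$, so the denominator is $1 - 2(1-\eta)$ which I must track in sign carefully; the correct form yields $\sqrt{\sigma^2/(1-2\eta)}$. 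Substituting $\sigma^2 = (1-s_0^2)/4$ and using $S_n/n - s_0 = 2(M_n - y_0)$ converts the constant to $2\sqrt{\tfrac{(1-s_0^2)/4}{1-2\eta}} = \sqrt{(1-s_0^2)/(1-2\eta)}$, matching \eqref{eq:lil-ldrw1}. In the \textbf{critical regime} $\eta = 1/2$, the first‑order linear term is too weak to dominate, so the normalizer degrades to $\sqrt{n/(2\log n \log\log\log n)}$; the extra $\log n$ factor is the signature of the boundary case $g_0'(y_0) = -1/2$ and comes from the critical LIL for stochastic approximation, with limiting constant $\sqrt{4\sigma^2} = \sqrt{1-s_0^2}$ after the factor‑of‑two rescaling. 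The twice‑differentiability hypothesis at $y_0$ enters here to control the quadratic remainder $g_0(y) + \tfrac12(y-y_0) = O((y-y_0)^2)$, ensuring the second‑order term does not disturb the critical rate.

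The main obstacle will be verifying that the generic mean field $g_0 = h - \mathrm{id}$ satisfies the precise regularity and noise hypotheses demanded by the off‑the‑shelf LIL for stochastic approximation, rather than the specific linear structure of the classical elephant walk. Concretely, I would need: (i) boundedness and the almost‑sure convergence of the conditional noise variance $\mathbb{E}[\xi_{n+1}^2\mid\mathcal{F}_n] \to \sigma^2$, which follows because $M_n \to y_0$ and $Y_{n+1}$ is Bernoulli with the continuous success probability $h(M_n)$; (ii) a local‑linearization control $g_0(y) = (\eta-1)(y-y_0) + r(y)$ with $r(y) = o(|y-y_0|)$ in the diffusive case (mere differentiability at $y_0$) and $r(y)=O(|y-y_0|^2)$ in the critical case (twice differentiability); and (iii) an argument that the trajectory spends all but finitely much time in the linearizable neighborhood of $y_0$, which is supplied by the almost‑sure convergence of Theorem~\ref{thm:slln}. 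Assembling these and matching constants is where the bookkeeping is delicate; once the hypotheses are checked, the two displays \eqref{eq:lil-ldrw1} and \eqref{eq:lil-ldrw2} follow from the corresponding general LIL statements quoted in Section~\ref{sec:sa}.
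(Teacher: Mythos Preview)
Your proposal is correct and follows essentially the same approach as the paper: both cast $V_n/n$ as a Robbins--Monro recursion with drift $h(\cdot)-\mathrm{id}$ and bounded Bernoulli noise, verify the almost-sure convergence, noise-variance, and local-linearization hypotheses, and then invoke the known LIL for stochastic approximation (the paper cites Gapo\v{s}kin--Krasulina for the diffusive case and Krasulina for the critical case) before rescaling by the factor $2$ to pass from $V_n/n - y_0$ to $S_n/n - s_0$. The paper merely routes this through its general multidimensional Theorem~\ref{thm:lil-gerw-cric} (with $s=1$, $A=2$, $b=-1$, $\mu=\Sigma=1$, so that $A\sqrt{x_0\mu^{-1}\Sigma - x_0^2} = \sqrt{1-s_0^2}$), but the underlying argument and the constants are identical to yours.
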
 

The following is an immediate consequence of Theorem~\ref{thm:slln} and Theorem~\ref{thm:lil}.

\begin{corollary}\label{cor:rec}
Suppose the assumptions of Theorem~\ref{thm:slln} are satisfied with $s_0 \neq 0$. Then $s_0 > 0$ (respectively, $s_0 < 0$) implies that almost surely 
\begin{align*}
\lim_{n\to\infty} S_n = \infty \text{ (respectively, }\lim_{n\to\infty} S_n = -\infty). 
\end{align*}
On the other hand, if the assumptions of Theorem~\ref{thm:lil} are satisfied with $s_0 = 0$ and $\eta \leq 1/2$, then almost surely
\[
      \left\{ \liminf_{n\to\infty} S_n = - \infty, \quad \limsup_{n\to\infty} S_n = \infty\right\}.
\]
\end{corollary}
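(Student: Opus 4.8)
The plan is to deduce both assertions directly from the two preceding theorems, handling the cases $s_0\neq 0$ and $s_0=0$ separately, since they require genuinely different inputs.

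For the first part, I would invoke only Theorem~\ref{thm:slln}. Suppose $s_0>0$. Because $S_n/n\to s_0$ almost surely, on a set of full probability one may choose, for each realization, an index $N$ beyond which $S_n/n>s_0/2$, so that $S_n>(s_0/2)\,n$ for all $n\geq N$. The right-hand side diverges, forcing $S_n\to+\infty$; the case $s_0<0$ is symmetric and gives $S_n\to-\infty$. It is worth noting that this produces an honest limit rather than merely a limsup, because the linear lower (respectively, upper) bound pushes $S_n$ past any threshold and keeps it there.

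The second part is where the law of iterated logarithm is essential, since with $s_0=0$ the strong law alone only yields $S_n=o(n)$. The key observation is that when $s_0=0$ the normalized quantity in Theorem~\ref{thm:lil} is exactly $S_n$ divided by a deterministic sequence tending to infinity, so a strictly positive limsup (respectively, strictly negative liminf) of that ratio forces $S_n$ itself to be unbounded above (respectively, below). Concretely, in the diffusive regime $\eta<1/2$ one rewrites
\[
\left(\frac{n}{2\log\log n}\right)^{1/2}\left(\frac{S_n}{n}-s_0\right)=\frac{S_n}{\sqrt{2n\log\log n}},
\]
and \eqref{eq:lil-ldrw1}, with $s_0=0$ so that $1-s_0^2=1$, gives almost surely
\[
\limsup_{n\to\infty}\frac{S_n}{\sqrt{2n\log\log n}}=\frac{1}{\sqrt{1-2\eta}}>0,\qquad \liminf_{n\to\infty}\frac{S_n}{\sqrt{2n\log\log n}}=-\frac{1}{\sqrt{1-2\eta}}<0.
\]
Since $\sqrt{2n\log\log n}\to\infty$, the positive limsup supplies a subsequence along which $S_n\to+\infty$, whence $\limsup_n S_n=+\infty$, while the negative liminf supplies a subsequence along which $S_n\to-\infty$, whence $\liminf_n S_n=-\infty$. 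The critical regime $\eta=1/2$ is treated identically: one replaces the normalization by $\sqrt{2n\log n\log\log\log n}$ and invokes \eqref{eq:lil-ldrw2} in place of \eqref{eq:lil-ldrw1}, where the relevant constant is $\sqrt{1-s_0^2}=1>0$, and the divergence of the scaling sequence closes the argument in exactly the same way.

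I do not anticipate a serious obstacle: the corollary is essentially a translation of the already-established asymptotics into statements about unboundedness. The only point deserving care is the logical separation between the two parts — in the first, the strong law delivers a true limit of $S_n$, whereas in the second it is precisely the sharper oscillation rates furnished by the law of iterated logarithm (rather than the strong law) that are needed to obtain the recurrent-type conclusion $\liminf_n S_n=-\infty$ and $\limsup_n S_n=+\infty$.
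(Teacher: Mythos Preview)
Your proposal is correct and follows essentially the same approach as the paper: the first part uses only the strong law $S_n/n\to s_0$ to force $S_n\to\pm\infty$, and the second part specializes the law of iterated logarithm at $s_0=0$ to $S_n/\sqrt{2n\log\log n}$ (respectively $S_n/\sqrt{2n\log n\log\log\log n}$) and reads off that the strictly positive limsup and strictly negative liminf, against a diverging normalizer, give $\limsup_n S_n=+\infty$ and $\liminf_n S_n=-\infty$. The paper's write-up is essentially identical, only phrasing the subsequence step as ``for infinitely many $n$, $S_n/\sqrt{2n\log\log n}>\tfrac12(1-2\eta)^{-1/2}$''.
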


Corollary~\ref{cor:rec} suggests a study of whether the one-dimensional generalized elephant random walk is transient or recurrent. However, as the Markovian nature of the one-dimensional generalized elephant random walk is time-inhomogeneous, the notion of recurrence and transience for this walk needs to be recalled.

\begin{definition}[Transience and Recurrence]\label{def:trrec}
    We call the one-dimensional generalized elephant random walk transient (respectively, recurrent) if the walk, starting from the origin, returns to the origin finitely many times (respectively, infinitely often) with probability one.  
\end{definition}

Because of the time-inhomogeneous Markovian structure, it is not necessary that the one-dimensional generalized elephant random walk is either transient or recurrent in the sense of Definition~\ref{def:trrec}. However, the following result follows immediately from Corollary~\ref{cor:rec}.

\begin{proposition}[Transience and Recurrence]\label{prop:tra-rec}
    Suppose the assumptions of Theorem~\ref{thm:slln} are satisfied with $s_0 \neq 0$. Then the one-dimensional generalized elephant random walk is transient in all three regimes. On the other hand, if the assumptions of Theorem~\ref{thm:lil} are satisfied with $s_0 = 0$, then the one-dimensional generalized elephant random walk is recurrent in the diffusive and critical regimes.
\end{proposition}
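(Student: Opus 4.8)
The plan is to deduce both statements directly from Corollary~\ref{cor:rec}, leaning on the crucial structural fact that the walk has unit steps: since $S_{n+1}-S_n = X_{n+1}\in\{-1,+1\}$, the sequence $(S_n)$ changes by exactly one at each epoch. This discreteness is precisely what converts the almost-sure statements about limits supplied by Corollary~\ref{cor:rec} into assertions about how often the walk visits the origin.

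For the transience claim, I would assume the hypotheses of Theorem~\ref{thm:slln} with $s_0\neq 0$ and split into the cases $s_0>0$ and $s_0<0$. In the first case, Corollary~\ref{cor:rec} gives $\lim_{n\to\infty}S_n=\infty$ almost surely, so there is an almost surely finite (random) time $N$ beyond which $S_n\geq 1$; hence $S_n=0$ can occur only for $n<N$, i.e. finitely often, which is exactly transience. The case $s_0<0$ is symmetric, with $S_n\to-\infty$. I would emphasize that this argument uses only $s_0\neq 0$ and never the value of $\eta$, so it applies verbatim in the diffusive, critical, and superdiffusive regimes, yielding the first half of the proposition.

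For the recurrence claim, I would assume the hypotheses of Theorem~\ref{thm:lil} with $s_0=0$ and $\eta\leq 1/2$ (the diffusive and critical regimes). The second part of Corollary~\ref{cor:rec} then yields $\liminf_{n\to\infty}S_n=-\infty$ and $\limsup_{n\to\infty}S_n=\infty$ almost surely. At this point I would invoke a discrete intermediate value argument: because the walk moves by $\pm 1$, any transition from an arbitrarily large negative value to an arbitrarily large positive value must pass through $0$. Since $\limsup=\infty$ and $\liminf=-\infty$ force infinitely many such two-sided excursions, the walk returns to the origin infinitely often almost surely, which is recurrence.

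The deduction is essentially immediate, so I do not anticipate a genuine obstacle; the only points demanding care are the discrete intermediate value step and the observation that the superdiffusive regime ($\eta>1/2$) is deliberately excluded from the recurrence conclusion, because Corollary~\ref{cor:rec} furnishes the two-sided divergence of $S_n$ only under $\eta\leq 1/2$.
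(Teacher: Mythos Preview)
Your proposal is correct and follows exactly the approach the paper takes: the paper simply states that the proposition ``follows immediately from Corollary~\ref{cor:rec}'' without giving further details, and your write-up supplies precisely those details, including the unit-step intermediate value argument needed for recurrence.
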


We have the following open problem regarding the transience/recurrence of the walk in the supercritical regime.
\begin{conjecture}\label{conj1}
    If the assumptions of Theorem~\ref{thm:slln} are satisfied with $s_0 = 0$ and $h$ is differentiable around $1/2$ with $h'(1/2) > 1/2$, then transience or recurrence of the walk is an open problem. We conjecture that the walk is transient in this case. 
\end{conjecture}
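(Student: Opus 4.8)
This statement is posed as an open problem, so I can only outline the strategy I would pursue to establish the conjectured transience in the superdiffusive regime $\eta = h'(1/2) > 1/2$ with $s_0 = 0$. The starting point is the superdiffusive expansion of Theorem~\ref{thm:super-dev1}: in this regime the scaled walk satisfies $S_n/n^\eta \to L$ almost surely for some random variable $L$, where $\eta > 1/2$ (recall $\eta \le 1$ always). Since the walk is symmetric ($s_0 = 0$ forces $y_0 = 1/2$ and $h(x) + h(1-x) = 1$), the law of $L$ is symmetric about $0$. The key reduction is that transience follows once we know $\P(L = 0) = 0$: on the event $\{L \neq 0\}$ we have $|S_n| \to \infty$, so the walk visits the origin only finitely often, and if this event has full probability the walk is transient almost surely. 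Thus the entire problem collapses to showing that the limit $L$ carries no atom at $0$.

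To attack $\P(L=0)=0$, I would work with the (approximate) martingale $M_n := \gamma_n S_n$, where $\gamma_n := \prod_{k=1}^{n-1}(1+\eta/k)^{-1} \sim c\, n^{-\eta}$ is chosen to compensate the linearized drift $\E[S_{n+1} \mid \mathcal F_n] \approx S_n(1 + \eta/n)$ coming from $g'(0) = \eta/(2p-1) > 0$. Because $\eta > 1/2$, the increment variances $\E[(M_{n+1}-M_n)^2 \mid \mathcal F_n] \asymp n^{-2\eta}$ are summable, so $M_n \to L$ in $L^2$ and almost surely, with $\E[L^2] > 0$; this already yields $\P(L \neq 0) > 0$. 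Upgrading this to $\P(L \neq 0) = 1$ is the crux, and I see two routes. The first is to prove that $L$ is absolutely continuous (hence atomless) by controlling its characteristic function $\phi_L$ through the recursion induced by the conditional steps and showing $\phi_L$ is integrable. The second is a conditional anti-concentration argument via L\'evy's zero--one law: since $\P(L = 0 \mid \mathcal F_N) \to \bone_{\{L=0\}}$, it suffices to produce a uniform bound $\P(L = 0 \mid \mathcal F_N) \leq 1 - \delta$ for some $\delta > 0$, which would contradict the limit being $1$ on a set of positive probability. Such a bound would follow from showing that, from any configuration at a large time $N$, there is probability at least $\delta$ of a macroscopic excursion that forces $L$ away from $0$.

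The main obstacle is exactly this non-degeneracy. In the classical elephant random walk the corresponding limit is known explicitly (its law lies in a tractable family), so atomlessness is transparent; here $g$ is an arbitrary function satisfying only the downcrossing and differentiability hypotheses, the linearization $g(u) \approx g'(0)u$ is valid only near $0$, and the full nonlinear dynamics enter the distribution of $L$. Controlling $\phi_L$ for general $g$ requires propagating smoothing estimates through a recursion with vanishing step size near the fixed point, where the drift degenerates, and I expect this to be delicate. The conditional anti-concentration route faces the same degeneracy: near $S = 0$ the conditional drift is $o(S_n/n)$, so bounding the escape probability uniformly in $N$ is not routine. I would therefore first settle the conjecture under additional regularity (for instance $g$ real-analytic with $g'(0) > 0$), where a characteristic-function or stochastic-approximation argument should give atomlessness, and treat the fully general case as the genuinely hard residual part that keeps the problem open.
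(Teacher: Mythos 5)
This statement is Open Problem~\ref{conj1}; the paper contains no proof of it, so there is no argument of the authors to compare against, and you correctly treat it as open and offer a program rather than a proof. Your central reduction is sound and is the natural one: by Theorem~\ref{thm:clt}c), $n^{1-\eta}(S_n/n) \to L$ almost surely, so on $\{L \neq 0\}$ one has $|S_n| \sim |L|\,n^{\eta} \to \infty$ and the walk returns to the origin finitely often; hence $\P(L=0)=0$ implies transience. In fact the paper's own Theorem~\ref{thm:super-dev1} shows your reduction is sharp when $h$ is smooth enough: on the event $\{L=0\}$ every correction term $\beta_j\left(L/n^{1-\eta}\right)^{j+1}$ vanishes, so the law of iterated logarithm \eqref{eq:slil-ldrw} forces $S_n$ to change sign infinitely often there, and since the steps are $\pm 1$ the walk would hit $0$ infinitely often on that event. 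Thus (given enough derivatives of $h$) transience is \emph{equivalent} to $\P(L=0)=0$, and your diagnosis that everything collapses to atomlessness of $L$ at $0$ matches exactly why the authors record both this conjecture and Open Problem~\ref{conj2} (nothing is known about the law of $L$) as open.

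Three concrete gaps in the write-up. First, $s_0=0$ does \emph{not} force symmetry: the hypotheses only give $h(1/2)=1/2$ together with the downcrossing condition at $y_0=1/2$, not $h(x)+h(1-x)=1$, so your claim that the law of $L$ is symmetric about $0$ is unjustified (harmless to the plan, but wrong as stated). Second, a scope mismatch: the conjecture assumes only that $h$ is differentiable around $1/2$ with $h'(1/2)>1/2$, which permits $\eta=1$ and no second derivative, whereas the construction of $L$ in Theorem~\ref{thm:clt}c) requires $1/2<\eta<1$ \emph{and} twice differentiability at $1/2$ (via Assumption~\ref{clt:assump2} in the proof); your program therefore leaves untouched the case $\eta=1$ (itself Open Problem~\ref{conj3}) and merely-differentiable $h$, where even the existence of the limit $L$ is not available from the paper. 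Third, your step $\E(L^2)>0$ is not free: the compensated process $\gamma_n S_n$ is only an approximate martingale, and controlling the nonlinear remainder $(2p-1)\left(g(S_n/n)-g'(0)S_n/n\right)$ in $L^2$ requires quantitative moment bounds on $S_n/n$, not just the almost sure convergence $S_n/n \to 0$ that Theorem~\ref{thm:slln} provides; this needs an argument (e.g., via the stochastic approximation recursion \eqref{eq-sa2}) rather than an assertion. With these caveats, your proposal is a reasonable and honest roadmap for the genuinely open part, namely $\P(L=0)=0$ for nonlinear $h$.
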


Our next result depicts the behavior of fluctuations of the walk around the almost sure limit of Theorem~\ref{thm:slln}  in the three regimes.

\begin{theorem}[Fluctuations around the almost sure limit]\label{thm:clt}
Suppose the assumptions of Theorem~\ref{thm:slln} are satisfied so that almost surely ${S_n}/{n}$ converges to some $s_0\in (-1,1)$. Let $h$ be differentiable in a neighborhood of $(s_0+1)/2$ 
with $\eta := h'((s_0+1)/2) < 1$. Then, we have the following.
\begin{enumerate}
\item[a)] Diffusive regime: if $\eta < {1}/{2}$, 
\begin{align}\label{eq:clt-ldrw1}
    \sqrt{n}\left(\frac{S_n}{n} - s_0\right) \stackrel{d}{\to} N\left(0,\frac{1-s^2_0}{1-2\eta}\right).
\end{align}
\end{enumerate}

For $\eta \geq 1/2$, we further assume that $h$ is twice differentiable at $(s_0+1)/2$.

\begin{enumerate}
\item[b)] Critical regime: if $\eta = {1}/{2}$, 
\begin{align}\label{eq:clt-ldrw2}
    \sqrt{\frac{n}{\log n}}\left(\frac{S_n}{n} - s_0\right) \stackrel{d}{\to} N\left(0,1-s^2_0\right).
\end{align}
\item[c)] Supercritical regime: if $1/2 < \eta < 1$, there exists a finite random variable $L$ (that may depend on $h$ and $q$) such that
\begin{align}\label{eq:clt-ldrw3}
    n^{1-\eta}\left(\frac{S_n}{n} - s_0\right) \stackrel{a.s.}{\to} L.
\end{align}
\end{enumerate}
\end{theorem}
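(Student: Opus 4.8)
The plan is to recognize the proportion $Z_n := V_n/n$ as a Robbins--Monro stochastic approximation recursion and to read off all three regimes from a single integrating-factor analysis of the resulting martingale. Writing $Y_{n+1} := \bone(X_{n+1} = +1)$, so that $\P(Y_{n+1} = 1 \mid \mathcal{F}_n) = h(Z_n)$ by \eqref{eq2} and \eqref{eq:h} for the natural filtration $\mathcal{F}_n := \sigma(X_1,\dots,X_n)$, an elementary computation (common denominator $n(n+1)$) gives
\begin{equation*}
Z_{n+1} - Z_n = \frac{1}{n+1}\bigl(h(Z_n) - Z_n\bigr) + \frac{1}{n+1}\bigl(Y_{n+1} - h(Z_n)\bigr).
\end{equation*}
This is a stochastic approximation with step size $1/(n+1)$, mean field $G(z) := h(z) - z$, and bounded martingale-difference noise $\Delta M_{n+1} := Y_{n+1} - h(Z_n)$. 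By Theorem~\ref{thm:slln}, $Z_n \to y_0 = (s_0+1)/2$ almost surely, where $G(y_0) = 0$; the local attraction strength is $\alpha := -G'(y_0) = 1 - h'(y_0) = 1 - \eta > 0$, and the conditional noise variance $h(Z_n)(1-h(Z_n))$ converges to $\sigma^2 := y_0(1-y_0)$. Since $S_n/n = 2Z_n - 1$ and $s_0 = 2y_0 - 1$, we have $S_n/n - s_0 = 2(Z_n - y_0)$, so every assertion about $S_n$ follows from the corresponding one about $Z_n$ by multiplying the limit by $2$ and any variance by $4$.

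Next I would linearize $G$ about $y_0$ and introduce the integrating factor $\beta_n$ defined by $\beta_{n+1} = (1 - \alpha/(n+1))\beta_n$, which satisfies $\beta_n \sim c\,n^{-\alpha}$. Setting $\xi_n := (Z_n - y_0)/\beta_n$, the homogeneous part telescopes and one finds, modulo a remainder, that $\xi_n$ is a martingale with conditional increment variance of order $\beta_{n+1}^{-2}(n+1)^{-2}\sigma^2 \asymp n^{2\alpha - 2}$. The entire phase transition is now encoded in the summability of $\sum n^{2\alpha - 2}$, i.e. in the sign of $2\alpha - 1 = 1 - 2\eta$. When $\eta < 1/2$ ($\alpha > 1/2$), $\var(\xi_n)$ grows like $n^{2\alpha - 1}$ and a martingale central limit theorem gives $\xi_n/n^{\alpha - 1/2} \stackrel{d}{\to} N(0, \sigma^2/(2\alpha-1))$, hence $\sqrt{n}(Z_n - y_0) \stackrel{d}{\to} N(0, \sigma^2/(2\alpha - 1))$. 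When $\eta = 1/2$ ($\alpha = 1/2$), $\var(\xi_n) \sim (\sigma^2/c^2)\log n$ and the same theorem yields $\sqrt{n/\log n}\,(Z_n - y_0) \stackrel{d}{\to} N(0, \sigma^2)$. When $1/2 < \eta < 1$ ($\alpha < 1/2$), the increment variances are summable, so $\xi_n$ is $L^2$-bounded and converges almost surely, whence $n^{\alpha}(Z_n - y_0) = n^{1-\eta}(Z_n - y_0)$ converges almost surely to a finite random variable.

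Translating each limit through $S_n/n - s_0 = 2(Z_n - y_0)$ and using the elementary identities $4y_0(1-y_0) = (1+s_0)(1-s_0) = 1 - s_0^2$ and $2\alpha - 1 = 1 - 2\eta$ produces exactly \eqref{eq:clt-ldrw1}, \eqref{eq:clt-ldrw2}, and \eqref{eq:clt-ldrw3}; in the two Gaussian cases the arbitrary constant $c$ in $\beta_n \sim c\,n^{-\alpha}$ cancels, a reassuring consistency check. I would isolate the three martingale limit statements as lemmas in Section~\ref{sec:sa} so that the proof here is purely a verification-and-translation argument.

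The main obstacle is controlling the linearization remainder $r_n := G(Z_n) + \alpha(Z_n - y_0)$ so that, after weighting by $\beta_{n+1}^{-1}(n+1)^{-1}$, it does not contaminate the scaled limit. In the diffusive regime the fluctuations are already of order $n^{-1/2}$, so the crude bound $r_n = o(|Z_n - y_0|)$ coming from differentiability of $h$ at $y_0$ makes the remainder of negligible order $n^{-1}$ and suffices. In the critical and superdiffusive regimes the convergence of $Z_n$ is slower, and the quadratic term of $G$ must be estimated, not merely absorbed into $o(|Z_n - y_0|)$; this is precisely why twice differentiability of $h$ at $(s_0+1)/2$ is assumed there, giving $G(z) = -\alpha(z - y_0) + O((z-y_0)^2)$ whose contribution, after weighting, stays summable (superdiffusive) or of lower order than $\sqrt{\log n}$ (critical). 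By contrast, the martingale noise hypotheses---bounded increments, convergent conditional variance $\sigma^2$, and a Lindeberg condition---are routine because $|\Delta M_{n+1}| \le 1$, and the almost sure convergence $Z_n \to y_0$ needed to launch the whole analysis is already furnished by Theorem~\ref{thm:slln}.
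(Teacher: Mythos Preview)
Your approach and the paper's are conceptually aligned---both cast $Z_n = V_n/n$ as a stochastic approximation with drift $G(z) = h(z) - z$ and step size $1/(n+1)$, then extract the three regimes from the linearization at $y_0$---but the execution differs. The paper does not carry out the integrating-factor martingale argument directly; instead it recognizes the one-dimensional model as an instance of the multidimensional framework (Model~\ref{eg:ldrw}, Proposition~\ref{prop:ldrw}) with $H = h$, $A = 2$, $b = -1$, $\Sigma_0 = (1-s_0^2)/4$, and then reads off \eqref{eq:clt-ldrw1}--\eqref{eq:clt-ldrw3} from Theorem~\ref{thm:clt-gerw}, whose proof in turn invokes the general CLTs of Zhang~\cite{zhang2016central} for recursive stochastic algorithms. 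Your route is more self-contained and makes the phase transition (summability of $\sum n^{2\alpha-2}$) completely transparent; the paper's route is more modular and avoids redoing the martingale analysis, at the cost of citing an external black box.

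One point in your sketch deserves tightening. In the diffusive regime you write that the remainder $r_n = G(Z_n) + \alpha(Z_n - y_0)$ is ``of negligible order $n^{-1}$'' because fluctuations are of order $n^{-1/2}$; but that order is exactly what you are trying to establish, so invoking it is circular, and in any case $r_n = o(|Z_n - y_0|)$ gives only $o(n^{-1/2})$, not $O(n^{-1})$. What actually makes the argument go through is a Toeplitz-type estimate: for any $\epsilon > 0$ the weighted remainder sum is eventually bounded by $\epsilon \sum_{k \le n} |\xi_k|/(k+1)$, and an a priori rate on $|\xi_k|$ (obtained either by a bootstrap or from the LIL of Theorem~\ref{thm:lil}) shows this is $O(\epsilon\, n^{\alpha - 1/2})$, hence negligible after multiplying by $\sqrt{n}\,\beta_n \sim c\, n^{1/2 - \alpha}$. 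This is routine but should be stated carefully; it is exactly the kind of bookkeeping that Zhang~\cite{zhang2016central} packages once and for all, which is why the paper chose to cite it rather than reprove it.
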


The above result naturally gives rise to the following open problems. 

\begin{problem}\label{conj2}
    For $1/2 < \eta < 1$, almost nothing is known about the properties of the distribution of the limiting random variable $L$ obtained in~\eqref{eq:clt-ldrw3}.
\end{problem}
\begin{problem}\label{conj3}
    Theorem~\ref{thm:clt} does not consider the case $\eta = 1$. In fact, for $\eta = 1$, the behavior of fluctuations of the walk around the almost sure limit of Theorem~\ref{thm:slln} is open.
\end{problem}

The next result deals with the behavior of the walk in the supercritical regime in a more elaborate manner. For a simpler version of this result, see Corollary~\ref{cor:supdiff}.

{
\begin{theorem}\label{thm:super-dev1}
Suppose the assumptions of Theorem~\ref{thm:slln} are satisfied so that almost surely ${S_n}/{n}$ converges to some $s_0\in (-1,1)$. Let $h$ be differentiable in a neighborhood of $(s_0+1)/2$ 
with $\eta := h'((s_0+1)/2) \in  (1/2,1)$ and $L$ be as in Theorem~\ref{thm:clt}. 
If $h$ has further $m$ derivatives, i.e. $(m+1)$ derivatives in all, around $(s_0+1)/2$, then there exist constants $\beta_1 = 1$, $\beta_2, \ldots, \beta_{m+1}$ %
recursively given by \begin{align}\label{eq:beta_ldrw}\beta_{j+1} &= -\frac{1}{j(1-\eta)}\sum_{i=2}^{j+1}\frac{h^{(i)}\left(\frac{s_0+1}{2}\right)}{2^{i-1}i!}\sum_{(c_1, \ldots, c_i) \in \mathcal{P}_{i,j+1}}\nu_{(c_1, \ldots, c_i)}\beta_{c_1}\ldots \beta_{c_i}, \quad j = 1, \ldots, m,\end{align} 
where, for $1 \leq i \leq t$, 
\begin{align}\label{eq:combn}
\mathcal{P}_{i,t} &= \left\{(c_1, \ldots, c_i) : \sum_{i=1}^i c_i = t, 1 \leq c_1 \leq \ldots \leq c_i\right\}, \nonumber\\ 
\nu_{(c_1, \ldots, c_i)} &= \# \text{ distinct arrangements of } (c_1, \ldots, c_i),
\end{align}
such that the following holds:
\begin{enumerate}
\item[a)] If \[ m \geq \frac{\eta - 1/2}{1-\eta}, \text{ then for } m_0 = \left\lfloor\frac{\eta - 1/2}{1-\eta}\right\rfloor,\]
we have, almost surely,
\begin{align}\label{eq:slil-ldrw}
    &\limsup_{n\to\infty}\sqrt{\frac{n}{2\log \log n}}\left(\left(\frac{S_n}{n} - s_0\right) - \sum_{j = 0}^{m_0} \beta_{j+1}\left(\frac{L}{n^{1-\eta}}\right)^{j+1} 
    \right) \nonumber \\
    =-&\liminf_{n\to\infty}\sqrt{\frac{n}{2\log \log n}}\left(\left(\frac{S_n}{n} - s_0\right) - \sum_{j = 0}^{m_0} \beta_{j+1}\left(\frac{L}{n^{1-\eta}}\right)^{j+1} \right)
   = \sqrt{\frac{1 - s^2_0}{2\eta -1}},
\end{align}
and
\begin{align}\label{eq:sclt-ldrw}
    \sqrt{n}\left(\left(\frac{S_n}{n} - s_0\right) - \sum_{j = 0}^{m_0} \beta_{j+1}\left(\frac{L}{n^{1-\eta}}\right)^{j+1}\right) \stackrel{d}{\to} {N}\left(0, \frac{1 - s^2_0}{2\eta -1}\right).
\end{align}
\item[b)] If \[ m < \frac{\eta - 1/2}{1-\eta},\] 
then, we have, almost surely,
\begin{align}\label{eq:s-ldrw}
    \left(\frac{S_n}{n} - s_0\right) - \sum_{j = 0}^{m} \beta_{j+1}\left(\frac{L}{n^{1-\eta}}\right)^{j+1} 
    {=}  o\left({n}^{-(1-\eta)(m+1)}\right).  
\end{align}
\end{enumerate}
\end{theorem}}

{
\begin{remark}\label{rem:0}
    Theorem~\ref{thm:super-dev1} gives an expansion of $S_n/n$ around $s_0$ in terms of powers of $n^{-(1-\eta)}L$. The number of terms in the expansion depends on the smoothness of the function $h$ around $(s_0+1)/2$. If there are sufficiently large number of derivatives of $h$, then we obtain the law of iterated logarithm and asymptotic normality of the error term. If the function $h$ lacks enough number of derivatives, we only have almost sure order bound for the error term.
\end{remark}}

{
\begin{remark}\label{rem:1}
   From \eqref{eq:beta_ldrw}, it follows that if $h^{(i+1)}((s_0+1)/2) = 0$ for $1 \leq i \leq m$, then $\beta_i = 0$ for $1 \leq i \leq m$. In the classical elephant random walk, $h(x) = (2p-1)x+1-p$ %
    is infinitely differentiable around $(s_0+1)/2$ (with $s_0 = 0$), and $\beta_i = 0$ for all $i \geq 1$. Further assume $1/2 < \eta  = 2p-1 < 1$ (or equivalently, $3/4 < p < 1$). Then Theorem~\ref{thm:super-dev1}a) applies with the sum in the expansion having only the linear term, which is $n^{-(1-\eta)}L = n^{-2(1-p)}L$.   
    This agrees with Theorem 2.3 of \cite{kubota2019gaussian}.
\end{remark}}

The following result is a special case of Theorem~\ref{thm:super-dev1} when $m = 1$.

\begin{corollary}\label{cor:supdiff}
   Suppose the assumptions of Theorem~\ref{thm:slln} are satisfied so that almost surely ${S_n}/{n}$ converges to some $s_0\in (-1,1)$. Let $h$ be twice differentiable in a neighborhood of $(s_0+1)/2$ 
with $\eta := h'((s_0+1)/2) \in  (1/2,1)$, $\eta_1 := h''((s_0+1)/2)$ and $L$ be as in Theorem~\ref{thm:clt}. 
Then the following results hold:
\begin{enumerate}
\item[a)] If ${1}/{2} < \eta < {3}/{4}$,
then almost surely,
\begin{align}\label{newq-1}
    \limsup_{n\to\infty}\sqrt{\frac{n}{2\log \log n}}&\left(\left(\frac{S_n}{n} - s_0\right) -  {n^{-(1-\eta)}}{L} 
    \right) \nonumber \\
    =-\liminf_{n\to\infty}\sqrt{\frac{n}{2\log \log n}}&\left(\left(\frac{S_n}{n} - s_0\right) - {n^{-(1-\eta)}}{L} \right)
   = \sqrt{\frac{1 - s^2_0}{2\eta -1}},\hspace{2.1cm}&
\end{align}
and
\begin{align}\label{newq-2}
    \sqrt{n}\left(\left(\frac{S_n}{n} - s_0\right) - {n^{-(1-\eta)}}{L}\right) \stackrel{d}{\to} {N}\left(0, \frac{1 - s^2_0}{2\eta -1}\right).
\end{align}
\item[b)] If $\eta = {3}/{4}$,
then almost surely,
\begin{align}\label{newq-3}
    \limsup_{n\to\infty}\sqrt{\frac{n}{2\log \log n}}&\left(\left(\frac{S_n}{n} - s_0\right) - {n^{-1/4}}{L} + {\eta_{1}}n^{-1/2}L^{2} 
    \right) \nonumber \\
    =-\liminf_{n\to\infty}\sqrt{\frac{n}{2\log \log n}}&\left(\left(\frac{S_n}{n} - s_0\right) - {n^{-1/4}}{L} + {\eta_{1}}n^{-1/2}L^{2} \right)
    = \sqrt{2\left(1- s^2_0\right)},\hspace{0.75cm}&
\end{align}
and 
\begin{align}\label{newq-4}
    \sqrt{n}\left(\frac{S_n}{n} - s_0\right) - {n^{1/4}}{L} + {{\eta_{1}}L^{2}}
    \stackrel{d}{\to} {N}\left(0, 2\left(1- s^2_0\right)\right).
\end{align}
\item[c)] If ${3}/{4} < \eta < 1$,
then 
\begin{align}\label{newq-5}
    {n}^{2(1-\eta)}\left(\left(\frac{S_n}{n} - s_0\right) - {n^{-(1-\eta)}}{L} \right)
    \stackrel{a.s.}{\to}  -\frac{\eta_{1}}{4(1-\eta)}L^{2}. 
\end{align}
\end{enumerate} 
\end{corollary}

\subsection{Examples}\label{subsec:examples-ldrw}

We now give some examples of functions that satisfy the assumptions of Theorem~\ref{thm:slln} so that the corresponding one-dimensional generalized elephant random walks converge almost surely. Consequently, we also discuss whether they undergo a phase transition. 

\begin{example}\label{geg-(-2)}
    Let $f$ be given by $f(x) = ax+b$, $a, b \in \mathbb{R}$, $0 \leq b, a+b \leq 1$. For $a = 0, b = 0.5$ \textcolor{black}{(i.e. $f(x) = 0.5$)}, this reduces to the simple symmetric random walk, and for $a = 1, b = 0$ \textcolor{black}{(i.e. $f(x) = x$)}, this reduces to the classical elephant random walk. If $a+2b = 1$, the walk is symmetric \textcolor{black}{(i.e. $f$ satisfies \eqref{eq:sym})}. For this choice of the function $f$, Theorem~\ref{thm:slln} implies that
    \[
        \frac{S_n}{n} \stackrel{a.s.}{\to} \frac{(2p-1)(a+2b-1)}{1-(2p-1)a}.
    \]
    Theorem~\ref{thm:clt} implies that the second-order dynamics of the walk depend on the value of $(2p-1)a$ $(< 1)$. The walk is diffusive (respectively, critical, supercritical) if and only if $(2p-1)a < 1/2$ (respectively, $(2p-1)a = 1/2$, $(2p-1)a > 1/2$).
\end{example}

\begin{example}\label{geg-(-1)}
    This example studies a quadratic function $f$, given by, 
    \[
        f(x) = \begin{cases}
            x^2+\frac{1}{4}, & 0 \leq x \leq \frac{1}{2} \\
            \frac{3}{4}-(1-x)^2, & \frac{1}{2} \leq x \leq 1.
        \end{cases}
    \]
    The walk is symmetric \textcolor{black}{(i.e. $f$ satisfies \eqref{eq:sym})}. Theorem~\ref{thm:slln} implies that, ${S_n}/{n} \stackrel{a.s.}{\to} 0$ and 
    Theorem~\ref{thm:clt} implies that the walk is diffusive (respectively, critical, supercritical) if and only if $p < 3/4$ (respectively, $p = 3/4$, $p > 3/4$). 
\end{example}

The following example illustrates the application of the one-dimensional generalized elephant random walk in modeling the market dynamics, discussed in Section~\ref{subsec:econ-ldrw}. 

\begin{example}\label{geg-0}
For this illustration, using the notations of Section~\ref{subsec:econ-ldrw}, we take $U = -L = 0.5$ and $\pi_D = \pi_S = \pi$, where, $\pi: [0,1] \mapsto [0,1]$ is given by, $\pi(x) = x^3/2$. The corresponding walk is symmetric (i.e., $f$ satisfies \eqref{eq:sym}). Theorem~\ref{thm:slln} implies that $S_n/n \stackrel{a.s.}{\to} 0$ and Theorem~\ref{thm:clt} implies that the walk is diffusive (respectively, critical, supercritical) if and only if $p > 1/6$ (respectively, $p = 1/6$, $p < 1/6$). %
\end{example}

{Examples~\ref{geg-2a} and \ref{geg-2}} are easier to describe in terms of the function $g$ of the equivalent location-dependent random walk (discussed in Section~\ref{sss:rwre}) rather than in terms of $f$. We can easily get the corresponding $f$ using \eqref{eq:gf}. We also recall that, as argued in Section~\ref{sss:rwre}, the walk is symmetric if and only if $g$ is odd. 

\begin{example}\label{geg-2a}
     Let $g$ be a polynomial of the form \[{g(x) = \sum_{i=1}^da_ix^i},\quad \sum_{i=1}^d i|a_i| < 1.\] The restrictions on the coefficients $\{a_i\}_{i=0}^d$ of the polynomial $g$ are sufficient conditions to ensure that $g$ takes values in $[-1,1]$ and the corresponding $f$, given by \eqref{eq:gf}, satisfies the assumptions of Theorem~\ref{thm:slln} with $s_0 = 0$. Thus, for such class of polynomials, by Theorem~\ref{thm:slln}, we get $S_n/n \stackrel{a.s.}{\to} 0$. An application of Theorem~\ref{thm:clt} indicates that the dynamics of the walk depend on the value of $(2p-1)a_1$. The walk is diffusive (respectively, critical, supercritical) if and only if $(2p-1)a_1 < 1/2$ (respectively, $(2p-1)a_1 = 3/4$, $(2p-1)a_1 > 3/4$).
\end{example}

\begin{example}\label{geg-2}
   Let $g$ be given by $g(x) = \phi(x)^k$, where, $k \in \mathbb{N}$ and $\phi$ is any odd function on $[-1,1]$ satisfying $\phi(x) < x$ for all $x \in (0,1]$. Certain choices for the function $\phi(x)$ can be  $\phi(x) = \sin x$, $\phi(x) = \tanh x$ etc. For all odd $k$, $g$ is odd and so the walk is symmetric. For each $k \in \mathbb{N}$, Theorem~\ref{thm:slln} implies that $S_n/n \stackrel{a.s.}{\to} 0$. Using Theorem~\ref{thm:clt}, we get that if $k > 1$, the walk is always diffusive. If $k=1$, the walk is diffusive (respectively, critical, supercritical) if and only if $(2p-1)\phi'(0) < 1/2$ (respectively, $(2p-1)\phi'(0) = 1/2$, $(2p-1)\phi'(0) > 1/2$).
\end{example}

{The final example provides an illustration for Corollary~\ref{cor:supdiff}}.

\begin{example}%
   {Let $f$ be given by
   \[
       f(x) = \frac{1}{2}+3\left(x-\frac{1}{2}\right) +\left(x-\frac{1}{2}\right)^{2}+\operatorname{sgn}\left(x-\frac{1}{2}\right)\left(x-\frac{1}{2}\right)^3,
   \]
   and $11/30 < p < 19 /30$. (The range of $p$ is chosen so as to make the function $h$ take values in $(0,1)$.) Theorem~\ref{thm:slln} implies that, ${S_n}/{n} \stackrel{a.s.}{\to} 0$ and 
    Theorem~\ref{thm:clt} implies that the walk is diffusive (respectively, critical, supercritical) if and only if $p < 7/12$ (respectively, $p = 7/12$, $p > 7/12$). Further, for $p > 7/12$, from Corollary~\ref{cor:supdiff} we get, if $p < {5}/{8}$ (respectively, $p = {5}/{8}$, $p > {5}/{8}$),  \eqref{newq-1} and \eqref{newq-2} (respectively, \eqref{newq-3} and \eqref{newq-4}, \eqref{newq-5}) hold with $s_0 = 0$, $\eta = 3(2p-1)$ and $\eta_1 = 2(2p-1)$.
   }
\end{example}

\section{Multidimensional Generalized Elephant Random Walk}\label{sec:gerw}

In this section, we extend the generalized elephant random walk to higher dimensions. In Section~\ref{subsec:model-gerw}, we describe the dynamics of the multidimensional generalized elephant random walk model. Section~\ref{subsec:special-cases} describes the dynamics of different variations of the elephant random walk that we consider here along with their generalizations and mentions how all these models become special cases of the multidimensional generalized elephant random walk. Finally in Section~\ref{sec:main-results}, we state the main results for the multidimensional generalized elephant random walk model. 

We use the following notations in the rest of the article. For $s \in \mathbb{N}$, denote $[s] := \{1, \ldots, s\}$. For any $s\times 1$ vector (both deterministic and random) $\boldsymbol{V}$ and $\emptyset \neq E = \{j_1 < j_2 < \ldots < j_l\} \subseteq [s]$, $\boldsymbol{V}^{(E)}$ is defined to be a $s\times 1$ vector, given by the following: 
\begin{align}\label{eq:not1}
    \left(\boldsymbol{V}^{(E)}\right)_i = \begin{cases}
        \boldsymbol{V}_i, &\quad i \in E, \\
        0, &\quad i \in [s]\setminus E,
    \end{cases}
\end{align}
and $\boldsymbol{V}_{(E)}$ is defined to be a $l\times 1$ vector, given by the following: 
\begin{align*}
    \left(\boldsymbol{V}_{(E)}\right)_{\alpha} = 
        \boldsymbol{V}_{j_{\alpha}}, &\quad \alpha \in [l].
\end{align*}
Note that $\boldsymbol{V}^{(E)}$ is obtained from $\boldsymbol{V}$ by replacing the coordinates other than those indexed by $E$ with zeros, while for $\boldsymbol{V}_{(E)}$ the coordinates other than those indexed by $E$ are dropped. We define $\boldsymbol{V}^{(\emptyset)}$ to be the zero vector but do not define $\boldsymbol{V}_{(E)}$ when $E=\emptyset$.

Similarly, for any $s\times s$ matrix $\boldsymbol{M}$ and $\emptyset \neq E = \{j_1 < j_2 < \ldots < j_l\} \subseteq [s]$, $\boldsymbol{M}^{(E)}$ is defined to be a $s\times s$ matrix, given by the following: 
\begin{align*}
    \left(\boldsymbol{M}^{(E)}\right)_{i,k} = \begin{cases}
        \boldsymbol{M}_{i,k}, &\quad i,k \in E, \\
        0, &\text{ otherwise },
    \end{cases}
\end{align*}
and $\boldsymbol{M}_{(E)}$ is defined to be a $l\times l$ matrix, given by the following: 
\begin{align*}
    \left(\boldsymbol{M}_{(E)}\right)_{\alpha,\beta} = 
        \boldsymbol{M}_{j_{\alpha},j_{\beta}}, &\quad \alpha,\beta \in [l].
\end{align*}
Note that $\boldsymbol{M}_{(E)}$ is the principal submatrix of $\boldsymbol{M}$ corresponding to the coordinates of $E$, while $\boldsymbol{M}^{(E)}$ is obtained by replacing all other entries with zeros. We define $\boldsymbol{M}^{(\emptyset)}$ to be the zero matrix but do not define $\boldsymbol{M}_{(E)}$ when $E=\emptyset$.

The vectors in this article are column vectors. A row vector is written as the transpose of the corresponding column vector. Let $\boldsymbol{0}_s$ (respectively, $\boldsymbol{1}_s$) denotes the $s$-dimensional vector with $0$ (respectively, $1$) everywhere. Let $\mathbb{I}_s$ be the $s\times s$ identity matrix. Let $\iota = \sqrt{-1}$ denotes the imaginary unit. By $\delta_{\boldsymbol{V}}$, we denote the distribution of the random variable degenerate at the deterministic vector $\boldsymbol{V}$. We denote the trace and transpose of the matrix $\boldsymbol{M}$ by $\operatorname{tr} \boldsymbol{M}$ and $\boldsymbol{M}^{\top}$, respectively. Whenever the dimensions of a vector $\boldsymbol{V}$ (respectively, matrix $\boldsymbol{M}$) is $1$ (respectively, $1 \times 1$), we denote it by $V$ (respectively, $M$). The Euclidean norm is denoted by $\|\cdot\|$.

\subsection{The Model}\label{subsec:model-gerw}

The multidimensional generalized elephant random walk $\left(\boldsymbol{S}_n\right)_{n\geq0}$ is a random walk on $\mathbb{R}^d$, $d \in \mathbb{N}$. For each $n \geq 0$, $\boldsymbol{S}_n$, the location of the walk at time $n$, is given by an affine transformation of $\boldsymbol{\widetilde{S}}_n$, the location at time $n$ of another auxiliary random walk $(\boldsymbol{\widetilde{S}}_n)_{n\geq0}$ on $[0,\infty)^{s}$, $s \in \mathbb{N}$, through 
\begin{align}\label{eq22}
    \boldsymbol{S}_n = \boldsymbol{A}\boldsymbol{\widetilde{S}}_n + n\boldsymbol{b},
\end{align}
where, $\boldsymbol{A}$ is a deterministic $d \times s$ matrix and $\boldsymbol{b}$ is a deterministic $d\times 1$ vector. The matrix $\boldsymbol{A}$ transforms the location $\boldsymbol{\widetilde{S}}_n$ of the auxiliary walk into a location on $\mathbb{R}^d$ and $\boldsymbol{b}$ is the drift component. %

To describe the dynamics of $(\boldsymbol{\widetilde{S}}_n)_{n\geq0}$, let $0 = j_0 < j_1 < j_2 < \ldots < j_{r-2} < j_{r-1} \leq j_r = s$ for $r \in [s+1]$ and define $\pi_i = \{j_{i-1}+1, \ldots, j_i\}$ for $i \in [r]$. If $j_{r-1} = j_r$, we take $\pi_r = \emptyset$. Define  $\boldsymbol{\Pi}^s_r = \{\pi_i\}_{i = 1}^r$ which forms a partition of $[s]$ of size $r$. %
The auxiliary walk $(\boldsymbol{\widetilde{S}}_n)_{n\geq0}$ starts from the origin $\boldsymbol{\widetilde{S}}_0 = \boldsymbol{0}_s$ at time $n = 0$. For $n \geq 0$,
\[
    \boldsymbol{\widetilde{S}}_{n+1} = \boldsymbol{\widetilde{S}}_{n} + \boldsymbol{\widetilde{X}}_{n+1},
\]
where, $\boldsymbol{\widetilde{X}}_{1}$ is a $D_s$-valued random variable where $D_s \subseteq [0,\infty)^s$ is an $s$-dimensional rectangle (possibly unbounded) with the origin as one of the corners.
For $n \geq 1$, given $\boldsymbol{\widetilde{X}}_1, \ldots, \boldsymbol{\widetilde{X}}_{n}$, 
\begin{align}\label{eq12}
           \boldsymbol{\widetilde{X}}_{n+1} = \begin{cases}
               \boldsymbol{Y}^{({\pi}_1)}_{n+1} &\text{ with probability }   \mathcal{P}_1\left(\frac{\boldsymbol{\widetilde{S}}_n}{n}\right), \\ \boldsymbol{Y}^{({\pi}_2)}_{n+1} &\text{ with probability }   \mathcal{P}_2\left(\frac{\boldsymbol{\widetilde{S}}_n}{n}\right), \\ \vdots \\ \boldsymbol{Y}^{({\pi}_{r-1})}_{n+1} &\text{ with probability }   \mathcal{P}_{r-1}\left(\frac{\boldsymbol{\widetilde{S}}_n}{n}\right), \\ 
               \boldsymbol{Y}^{({\pi}_r)}_{n+1} &\text{ with probability } 1-\sum_{j=1}^{r-1}\mathcal{P}_j\left(\frac{\boldsymbol{\widetilde{S}}_n}{n}\right).
           \end{cases}
\end{align}
where, $\boldsymbol{Y}_1, \boldsymbol{Y}_2, \boldsymbol{Y}_3, \ldots$ are independent and identically distributed $D_s$-valued random variables, for $n \geq 1$ and $i \in [r]$, $\boldsymbol{Y}^{(\pi_i)}_{n+1}$ is defined according to~\eqref{eq:not1} and $\mathcal{P}_i: D_s \mapsto [0,1]$, $i \in [r-1]$, satisfying $\sum_{j=1}^{r-1}\mathcal{P}_j(\boldsymbol{x}) < 1$ for all $\boldsymbol{x} \in D_s$. In other words, given $\boldsymbol{\widetilde{X}}_1, \ldots, \boldsymbol{\widetilde{X}}_{n}$, for $i \in [r-1]$, $\boldsymbol{\widetilde{X}}_{n+1}$ is the vector given by $i$-th block of $\boldsymbol{Y}_{n+1}$ appended with zero with probability $\mathcal{P}_i(\boldsymbol{\widetilde{S}}_n/n)$ or the vector given by $r$-th block of $\boldsymbol{Y}_{n+1}$ appended with zero with complementary probability.

\subsection{Some Special Cases}\label{subsec:special-cases}

In this section, we describe how some variations of the
elephant random walk can be generalized in the same way we generalized the classical elephant random walk in Section~\ref{sec:ldrw} and then show that all these models are special cases of the multidimensional generalized elephant random walk.

\subsubsection{One-dimensional Generalized Elephant Random Walk}\label{eg:ldrw}

We first write down the usual one-dimensional generalized elephant random walk in the form described in Section~\ref{subsec:model-gerw}.
\hfill \\\\
\textbf{The Model:} The evolution of this model has already been described in Section~\ref{subsec:model-ldrw}. \\\\
By taking $\widetilde{S}_n = V_n = \#\{1 \leq i \leq n : X_i = 1\}$, one can easily show that it is a multidimensional generalized elephant random walk model with the parameters $s = d = 1$, $r = 2$, ${Y}_1 \sim \delta_{{1}}$, $\boldsymbol{\Pi}^1_2 = \{\{1\},\emptyset\}$, ${A} = 2$, ${b} = -1$ and ${\mathcal{P}_1} = h$, where $h$ is given by \eqref{eq:h}.

\subsubsection{Generalized Minimal Random Walk}\label{eg:gmrw}

This is a generalization of the minimal random walk model, first introduced in \cite{harbola2014memory}. 

\noindent\textbf{The Model:} The walk starts from the origin at time $0$. At time $n = 1$, the elephant moves one step in the positive direction (respectively, does not move) with probability $r \in (0,1)$ (respectively, $1 - r$). For $n \geq 1$, let $V_n$ be the number of $+1$ steps till time $n$ (observe that, here $V_n = S_n$). Let $f : [0,1] \to [0,1]$. At time $n + 1$, $n \geq 1$, the elephant chooses a step $\mathcal{X}_{n+1}$. 
Given $S_n$,  $\mathcal{X}_{n+1} = +1$ (respectively, $\mathcal{X}_{n+1} = 0$) with probability $f(V_n/n)$ (respectively, $1-f(V_n/n)$). If $\mathcal{X}_{n+1} = +1$, the elephant takes the step $+1$ (respectively, $0$) with probability $p \in (0,1)$ (respectively, $1-p$). If $\mathcal{X}_{n+1} = 0$, the elephant takes the step $+1$ (respectively, $0$) with probability $q \in (0,1)$ (respectively, $1-q$). Thus for any $n\geq 0$, the location $S_n$ of the elephant at time $n$  is given by 
\[
    S_{n+1} = S_{n} + X_{n+1},
\]
where $S_0 = 0$, 
\begin{align*}
   X_{1} = \begin{cases} +1 &\text{ with probability } r, \\ 0
   &\text{ with probability } 1-r, \end{cases} \quad 0 < r < 1.
\end{align*}
and $X_{n+1}$, $n \geq 1$, are as follows. For $n \geq 1$, 
\begin{align*}
    \mathcal{X}_{n+1} = \begin{cases} +1 &\text{ with probability } f\left(\frac{S_n}{n}\right), \\ 0
   &\text{ with probability } 1-f\left(\frac{S_n}{n}\right).\end{cases}
\end{align*}
Given $\mathcal{X}_{n+1}$, if $\mathcal{X}_{n+1} = +1$,
\begin{align*}
   X_{n+1} = \begin{cases} +1 &\text{ with probability } p, \\ 0
   &\text{ with probability } 1-p, \end{cases} \quad 0 < p < 1,
\end{align*}
and if $\mathcal{X}_{n+1} = 0$,
\begin{align*}
   X_{n+1} = \begin{cases} +1 &\text{ with probability } q, \\ 0
   &\text{ with probability } 1-q, \end{cases} \quad 0 < q < 1.
\end{align*}
This model reduces to the minimal random walk of \cite{harbola2014memory} for $f(x) = x$. Also, for $p=q$, the steps $X_n$ follow Bernoulli($p$) distribution, whenever $n>1$, and the model reduces to a linear transform of (possibly biased) simply random walk. Again, for $p=1-q$, we obtain the model in Section~\ref{sec:ldrw} as a special case.

By taking $\widetilde{S}_n = V_n$, we can show that it is a multidimensional generalized elephant random walk model with the parameters $s = d = 1$, $r = 2$, ${Y}_1 \sim \delta_{{1}}$, $\boldsymbol{\Pi}^1_2 = \{\{1\}, \{\emptyset\}\}$, ${A} = 1$, ${b} = 0$ and ${\mathcal{P}_1} : [0,1] \to [0,1]$, given by, $\mathcal{P}_1(x) = (p-q)f(x) + q$.

\begin{remark}
Like the one-dimensional generalized elephant random walk, there is an asymmetry in the dynamics of the generalized minimal random walk. Namely, though the probability of choosing the $+1$ step at epoch $n + 1$ is equal to $f(V_n/n)$, the same for choosing the $0$ step at epoch $n + 1$, in general, is not equal to $f(W_n/n)$, where, $W_n = n - V_n = $ is the number of $0$ steps till time $n$. We call the generalized minimal random walk symmetric if $f$ satisfies \eqref{eq:sym}. Examples of such $f$ can be found in Section~\ref{subsec:examples-ldrw}.   
\end{remark}

\subsubsection{Generalized Elephant Random Walk with Random Step Sizes}\label{eg:gerwrs}
This is a generalization of the elephant random walk with random step sizes, first introduced in \cite{erwrs1}. \hfill \\\\
\textbf{The Model:} This walk replaces the simple ($\pm 1$) steps of Model~\ref{eg:ldrw} by steps of random magnitude, while the directions evolve as earlier. The walk starts from the origin at time $0$. Let $Z_1, Z_2, \ldots$ be independent and identically distributed positive real-valued random variables with finite mean. At time $n = 1$, the elephant moves in the positive direction (respectively, negative direction) with probability $q \in (0,1)$ (respectively, $1 - q$), with the magnitude of the step being $Z_1$, independent of the direction. For $n \geq 1$, let $V_n$ be the number of steps in the positive direction till time $n$. Let $f : [0,1] \to [0,1]$. At time $n + 1$, $n \geq 1$, the elephant chooses a direction $\mathcal{X}_{n+1}$. 
Given $V_n$, $\mathcal{X}_{n+1} = +1$ (respectively, $\mathcal{X}_{n+1} = -1$) with probability $f(V_n/n)$ (respectively, $1 - f(V_n/n)$). Then the elephant moves in the direction $\mathcal{X}_{n+1}$ (respectively, $-\mathcal{X}_{n+1}$) with probability $p \in (0,1)$ (respectively, $1-p$), with the magnitude of the step being $Z_{n+1}$, independent of the direction. So, for any $n\geq 0$, the location $S_n$ of the elephant at time $n$ is given by
\[
    S_{n+1} = S_{n} + X_{n+1},
\]
where $S_0 = 0$, given $Z_1$,
\begin{align*}
   X_{1} = \begin{cases} +Z_1 &\text{ with probability } q, \\ -Z_1
   &\text{ with probability } 1 - q, \end{cases} \quad 0 < q < 1.
\end{align*} 
and $X_{n+1}$, $n \geq 1$, are as follows. For $n \geq 1$, $V_{n} = \sum_{i=1}^n \boldsymbol{1}\{X_i > 0\}$,
\begin{align*}
    \mathcal{X}_{n+1} = \begin{cases}+1 &\text{ with probability } f\left(\frac{V_n}{n}\right), \\ -1
   &\text{ with probability } 1-f\left(\frac{V_n}{n}\right).\end{cases}
\end{align*}
Given $\mathcal{X}_{n+1}$ and $Z_{n+1}$, 
\begin{align*}
   X_{n+1} = \begin{cases} +Z_{n+1}\mathcal{X}_{n+1} &\text{ with probability } p, \\ -Z_{n+1}\mathcal{X}_{n-1}
   &\text{ with probability } 1-p, \end{cases} \quad 0 < p < 1.
\end{align*}
This model reduces to the elephant random walk with random step sizes of \cite{erwrs1} for $f(x) = x$. \\\\
By taking \[\boldsymbol{\widetilde{S}_n} = \begin{pmatrix}
    V_n \\ \sum_{i=1}^nZ_i\boldsymbol{1}\{V_n - V_{n-1} = 1\} \\ \sum_{i=1}^nZ_i\boldsymbol{1}\{V_n - V_{n-1} = 0\} 
\end{pmatrix},\] it can be shown that this is a multidimensional generalized elephant random walk model with the parameters $s = 3$, $d = 1$, $r = 2$, $\boldsymbol{Y}_n \stackrel{d}{=} \begin{pmatrix}1 & Z_n & Z_n\end{pmatrix}^{\top}$, $n\geq 1$, $\boldsymbol{\Pi}^3_2 = \{\{1,2\}, \{3\}\}$, $\boldsymbol{A}_{1\times 3} = \begin{bmatrix} 0 & 1 & -1\end{bmatrix}$, $\boldsymbol{b}_{1\times 1} = 0$ and, ${\mathcal{P}}_1 : [0,1]\times[0,\infty)^2 \to [0,1]$ given by $\mathcal{P}_1(\boldsymbol{x}) = (2p-1)f(x_1) + 1-p$.

\begin{remark}
Like the one-dimensional generalized elephant random walk, there is an asymmetry in the dynamics of the generalized elephant random walk with random step sizes. Namely, though the probability of choosing the $+1$ direction at epoch $n + 1$ is equal to $f(V_n/n)$, the same for choosing the $-1$ direction at epoch $n + 1$, in general, is not equal to $f(W_n/n)$, where, $W_n = n - V_n = $ is the number of steps in the negative direction till time $n$. We call the generalized elephant random walk with random step sizes symmetric if $f$ satisfies \eqref{eq:sym}. Examples of such $f$ can be found in Section~\ref{subsec:examples-ldrw}.     
\end{remark}

\subsubsection{$k$-dimensional Generalized Elephant Random Walk}\label{eg:gmerw}

This is a generalization of $k$-dimensional elephant random walk, first introduced in \cite{bercu2019multi}. \hfill \\\\
\textbf{The Model:} The $k$-dimensional walk starts from the origin at time $0$. {For $j \in [2k]$, by the $j$-th direction, we mean the direction of $\boldsymbol{u}_{j}$}, where, \textcolor{black}{\[\boldsymbol{u}_{j} = (-1)^{j+1}(
0  \ldots  0  \underbrace{1}_{\left\lfloor\frac{j+1}{2}\right\rfloor-\text{th position}}  0 \ldots  0
)_{k \times 1},\]}that is, they denote the unit vector successively in $2k$ possible directions. At time $n = 1$, the elephant moves one step in one of $2k$ possible directions with equal probability. For $n \geq 1$, let $V^j_n$, $1 \leq j \leq 2k-1$ be the number of steps in the $j$-th direction till time $n$. Let $f : [0,1] \to [0,1]$. At time $n + 1$, $n \geq 1$, the elephant chooses a direction $\mathcal{X}_{n+1}$. 
Given $\left(V^j_n\right)_{j=1}^{2k-1}$, \begin{align}\label{eq:chi_n}\mathcal{X}_{n+1} = \begin{cases} j &\text{ \text{with probability} }f\left(\frac{V^j_n}{n}\right), \quad j \in [2k-1] \\ 2k &\text{ \text{with probability} }1-\sum_{j=1}^{2k-1}f\left(\frac{V^j_n}{n}\right), \quad j = 2k.\end{cases}\end{align} Then the elephant moves in the direction $\mathcal{X}_{n+1}$ with probability $p \in (0,1)$, or moves in one of the remaining $2k-1$ directions with probability $(1-p)/(2k-1)$.
So, for any $n\geq 0$, the location $\boldsymbol{S}_n$ of the elephant at time $n$ is given by
\[
    \boldsymbol{S}_{n+1} = \boldsymbol{S}_{n} + \boldsymbol{X}_{n+1},
\]
where $\boldsymbol{S}_0 = \boldsymbol{0}_k$, \begin{align*}\boldsymbol{X}_{1} = \boldsymbol{u}_j \text{ with probability } \frac{1}{2k}, \quad 1 \leq j \leq 2k,\end{align*} and and $\boldsymbol{X}_{n+1}$, $n \geq 1$, are as follows. For $n \geq 1$, $V^j_{n} = \sum_{i=1}^n \boldsymbol{1}\{\boldsymbol{X}_i = \boldsymbol{u}_{j}\}$, $1 \leq j \leq 2k-1$ and $\mathcal{X}_{n+1}$ is given by~\eqref{eq:chi_n}.
Given $\mathcal{X}_{n+1}$,
\begin{align*}
   \boldsymbol{X}_{n+1} = \begin{cases} \boldsymbol{u}_{\mathcal{X}_{n+1}} &\text{ w.p. } p, \\ \boldsymbol{u}_{j}
&\text{ w.p. } \frac{1-p}{2k-1}, \quad 1 \leq j \leq 2k, j \neq \mathcal{X}_{n+1}, \end{cases} \quad 0 < p < 1.
\end{align*} 
This model reduces to the $k$-dimensional elephant random walk of \cite{bercu2019multi} for $f(x) = x$.\\\\
By taking $\boldsymbol{\widetilde{S}_n} = \begin{pmatrix}
    V^1_n & V^2_n & \ldots & V^{2k-1}_n
\end{pmatrix}^{\top}$, this can be shown to be a multidimensional generalized elephant random walk model with the parameters $s = 2k-1$, $d = k$, $r = 2k$, $\boldsymbol{Y}_1 \stackrel{d}{=} \delta_{\boldsymbol{1}_{2k-1}}$, $\boldsymbol{\Pi}^{2k-1}_{2k} = \{\{1\}, \{2\}, \ldots, \{2k-1\},\emptyset\}$, {\[\boldsymbol{A}_{k\times 2k-1} = \begin{bmatrix} 1 & -1 & 0 & 0 & 0 & 0 &\ldots & 0 & 0 & 0 \\ 0 & 0 & 1 & -1 & 0 & 0 & \ldots & 0 & 0 & 0\\ \vdots & \vdots & \vdots & \vdots & \vdots & \vdots & \vdots & \vdots & \vdots & \vdots \\ 0 & 0 & 0 & 0 & 0 & 0 & \ldots & 1 & -1 & 0 \\ 1 & 1 & 1 & 1 & 1 & 1 & \ldots &1 & 1 & 2 \end{bmatrix}, \quad \boldsymbol{b}_{k\times 1} = \begin{bmatrix}
    0 \\ 0 \\ \vdots \\ 0 \\ -1 
\end{bmatrix},\]} and, $\boldsymbol{\mathcal{P}} : [0,1]^{2k-1} \to [0,1]^{2k-1}$ given by, 
\begin{align*}{\mathcal{P}}_j(\boldsymbol{x}) = pf(x_j) + \frac{1-p}{2k-1}(1-f(x_j)), \quad 1 \leq j \leq 2k-1.\end{align*}

\begin{remark}
    Like the one-dimensional generalized elephant random walk, there is an asymmetry in the dynamics of the $k$-dimensional generalized elephant random walk. Namely, though the probability of choosing the $j$-th direction at epoch $n + 1$ is equal to $f(V^j_n/n)$ for $1 \leq j \leq 2k-1$, the same for choosing the $2k$-th direction at epoch $n + 1$, in general, is not equal to $f(V^{2k}_n/n)$, where, $V^{2k}_n = n - \sum_{j=1}^{2k-1}V^j_n$ is the number of steps in the $2k$-th direction till time $n$. We call the $k$-dimensional generalized elephant random walk symmetric if for any $x_1, \ldots, x_{2k-1} \in [0,1]$ satisfying $\sum_{i=1}^{2k-1}x_i < 1$, $f$ satisfies \[\sum_{i=1}^{2k-1}f(x_i)+f\left(1-\sum_{i=1}^{2k-1}x_i\right)  = 1.\] It is easy to see that for $k =2$, the $k$-dimensional generalized elephant random walk with a continuous $f$ is symmetric if and only if $f$ is of the form $f(x) = ax+b$ with $a+2kb = a+4b = 1$.
\end{remark}

\begin{remark}
    The multidimensional generalized elephant random walk is indeed a generalized model in the sense that random walk models that are a combination of two or more of the above models (for example, we can consider the $k$-dimensional generalized minimal random walk as a combination of Model~\ref{eg:gmrw} and Model~\ref{eg:gmerw}) can also be described (with appropriate parameters) and analyzed using our model. 
\end{remark}

\subsection{Main Results}\label{sec:main-results}

Define $\boldsymbol{\mu} := \mathbb{E}\left(\boldsymbol{Y}_1\right)$ and $\mathcal{P}_r : D_s \mapsto [0,1]$ be given by $\mathcal{P}_r(\boldsymbol{x}) =  1-\sum_{j=1}^{r-1}\mathcal{P}_j(\boldsymbol{x})$. The behavior of the multidimensional generalized elephant random walk is determined by the function $\boldsymbol{H}: D_s \mapsto [0,\infty)^s$, given by, 
\begin{align}\label{eq:H}
\boldsymbol{H}(\boldsymbol{x}) &:= 
\sum_{i=1}^{r}\mathcal{P}_i(\boldsymbol{x})\boldsymbol{\mu}^{(\pi_i)} 
= \begin{pmatrix}
    \mathcal{P}_1(\boldsymbol{x})\boldsymbol{\mu}_{(\pi_1)} \\ \vdots \\ \mathcal{P}_{r}(\boldsymbol{x})\boldsymbol{\mu}_{(\pi_{r})} 
\end{pmatrix}.
\end{align} 
\begin{remark}
    If $\pi_r = \emptyset$, then the vector representation of $\boldsymbol{H}$ in \eqref{eq:H} should be interpreted as 
\begin{align*}\begin{pmatrix}
    \mathcal{P}_1(\boldsymbol{x})\boldsymbol{\mu}_{(\pi_1)} \\ \vdots \\ \mathcal{P}_{r-1}(\boldsymbol{x})\boldsymbol{\mu}_{(\pi_{r-1})} 
\end{pmatrix}.
\end{align*}
\end{remark}
Our first result states a sufficient condition for almost sure convergence of the multidimensional generalized elephant random walk. 
\begin{theorem}[Almost sure convergence]\label{thm:slln-gerw}
Let $\mathbb{E}\left(\|\boldsymbol{Y}_1\|^2\right) < \infty$. Suppose there exists unique $\boldsymbol{x}_0 \in D_s$ such that $\boldsymbol{H}(\boldsymbol{x}_0) = \boldsymbol{x}_0$ 
and for any closed $C \subseteq D_s\setminus \{\boldsymbol{x}_0\}$, 
\begin{align}\label{eq33}
    \sup_{\boldsymbol{x} \in C} \left\{(\boldsymbol{x}-\boldsymbol{x}_0)^{\top}\left(\boldsymbol{H}(\boldsymbol{x}) - \boldsymbol{x}\right)\right\} < 0.
\end{align}
Then, 
\begin{align}\label{eq:asconv1}
   \frac{\boldsymbol{S}_n}{n} \stackrel{a.s.}{\to} \boldsymbol{A}\boldsymbol{x}_0 + \boldsymbol{b}.
\end{align}
\end{theorem}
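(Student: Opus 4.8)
The plan is to recognize the normalized auxiliary walk $\boldsymbol{\theta}_n := \boldsymbol{\widetilde{S}}_n/n$ as a Robbins--Monro stochastic approximation scheme and to transfer its limit to $\boldsymbol{S}_n/n$ through the affine relation \eqref{eq22}. Since \eqref{eq22} gives $\boldsymbol{S}_n/n = \boldsymbol{A}\boldsymbol{\theta}_n + \boldsymbol{b}$ with $\boldsymbol{A},\boldsymbol{b}$ deterministic, it suffices to prove $\boldsymbol{\theta}_n \stackrel{a.s.}{\to} \boldsymbol{x}_0$; the conclusion \eqref{eq:asconv1} then follows by continuity of $\boldsymbol{x} \mapsto \boldsymbol{A}\boldsymbol{x} + \boldsymbol{b}$. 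Note first that the iterates never leave the (convex) domain $D_s$: each increment $\boldsymbol{Y}^{(\pi_i)}_{n+1}$ lies in $D_s$ (zeroing coordinates of a point of the rectangle $D_s$ keeps it in $D_s$), so $\boldsymbol{\theta}_n$, being an average of such increments, stays in $D_s$, where $\boldsymbol{H}$ and the $\mathcal{P}_i$ are defined.

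First I would derive the recursion. Writing $\boldsymbol{\widetilde{S}}_{n+1} = \boldsymbol{\widetilde{S}}_n + \boldsymbol{\widetilde{X}}_{n+1} = n\boldsymbol{\theta}_n + \boldsymbol{\widetilde{X}}_{n+1}$ and dividing by $n+1$ yields
\[
\boldsymbol{\theta}_{n+1}-\boldsymbol{\theta}_n=\frac{1}{n+1}\bigl(\boldsymbol{\widetilde{X}}_{n+1}-\boldsymbol{\theta}_n\bigr),\qquad n\geq 1.
\]
Let $\mathcal{F}_n := \sigma(\boldsymbol{\widetilde{X}}_1,\ldots,\boldsymbol{\widetilde{X}}_n)$. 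Because $\boldsymbol{Y}_{n+1}$ is independent of $\mathcal{F}_n$ with $\mathbb{E}\boldsymbol{Y}_{n+1} = \boldsymbol{\mu}$, the conditional law \eqref{eq12} gives
\[
\mathbb{E}\bigl[\boldsymbol{\widetilde{X}}_{n+1}\mid\mathcal{F}_n\bigr]=\sum_{i=1}^{r}\mathcal{P}_i(\boldsymbol{\theta}_n)\,\boldsymbol{\mu}^{(\pi_i)}=\boldsymbol{H}(\boldsymbol{\theta}_n),
\]
exactly the mean field of \eqref{eq:H}. Setting $\boldsymbol{\xi}_{n+1} := \boldsymbol{\widetilde{X}}_{n+1} - \boldsymbol{H}(\boldsymbol{\theta}_n)$, we obtain the stochastic approximation recursion
\[
\boldsymbol{\theta}_{n+1}-\boldsymbol{\theta}_n=\frac{1}{n+1}\Bigl[\bigl(\boldsymbol{H}(\boldsymbol{\theta}_n)-\boldsymbol{\theta}_n\bigr)+\boldsymbol{\xi}_{n+1}\Bigr],
\]
with step sizes $\gamma_{n+1} = 1/(n+1)$ (so $\sum_n\gamma_n = \infty$, $\sum_n\gamma_n^2 < \infty$), driving field $\boldsymbol{H}(\boldsymbol{x}) - \boldsymbol{x}$, and martingale-difference noise, $\mathbb{E}[\boldsymbol{\xi}_{n+1}\mid\mathcal{F}_n] = \boldsymbol{0}_s$.

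Next I would feed this into the stochastic approximation convergence lemma of Section~\ref{sec:sa}, using the Lyapunov function $\mathcal{V}_n := \|\boldsymbol{\theta}_n - \boldsymbol{x}_0\|^2$. Expanding the square and taking conditional expectations gives the almost-supermartingale inequality
\[
\mathbb{E}\bigl[\mathcal{V}_{n+1}\mid\mathcal{F}_n\bigr]\leq\bigl(1+C_1\gamma_{n+1}^2\bigr)\mathcal{V}_n+2\gamma_{n+1}(\boldsymbol{\theta}_n-\boldsymbol{x}_0)^{\top}\bigl(\boldsymbol{H}(\boldsymbol{\theta}_n)-\boldsymbol{\theta}_n\bigr)+C_2\gamma_{n+1}^2,
\]
where the moment bound $\mathbb{E}(\|\boldsymbol{Y}_1\|^2) < \infty$ controls the error terms: since $\boldsymbol{H}(\boldsymbol{\theta}_n)+\boldsymbol{\xi}_{n+1} = \boldsymbol{\widetilde{X}}_{n+1}-\boldsymbol{\theta}_n$ and $\|\boldsymbol{\widetilde{X}}_{n+1}\| \leq \|\boldsymbol{Y}_{n+1}\|$, the conditional second moment is bounded by a constant plus a multiple of $\mathcal{V}_n$ (via $\|\boldsymbol{\theta}_n\|^2 \leq 2\mathcal{V}_n + 2\|\boldsymbol{x}_0\|^2$), which produces the summable multiplicative and additive corrections. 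The decisive structural input is hypothesis \eqref{eq33}, which states precisely that the middle drift term $(\boldsymbol{\theta}_n-\boldsymbol{x}_0)^{\top}(\boldsymbol{H}(\boldsymbol{\theta}_n)-\boldsymbol{\theta}_n)$ is $\leq 0$, strictly negative and uniformly bounded away from $0$ on every closed subset of $D_s$ avoiding $\boldsymbol{x}_0$. The Robbins--Siegmund theorem then gives $\mathcal{V}_n \to \mathcal{V}_\infty < \infty$ almost surely together with $\sum_n \gamma_{n+1}\bigl(-(\boldsymbol{\theta}_n-\boldsymbol{x}_0)^{\top}(\boldsymbol{H}(\boldsymbol{\theta}_n)-\boldsymbol{\theta}_n)\bigr) < \infty$. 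As $\sum_n \gamma_n = \infty$, if $\mathcal{V}_\infty(\omega) > 0$ then eventually $\boldsymbol{\theta}_n$ lies in the closed set $\{\boldsymbol{x}\in D_s : \|\boldsymbol{x}-\boldsymbol{x}_0\| \geq \sqrt{\mathcal{V}_\infty(\omega)/2}\}$, on which \eqref{eq33} bounds the drift below by a positive constant, contradicting summability; hence $\mathcal{V}_\infty = 0$, i.e. $\boldsymbol{\theta}_n \stackrel{a.s.}{\to} \boldsymbol{x}_0$.

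The main obstacle will be controlling the iterates when $D_s$ is unbounded. There the error term in the Lyapunov recursion grows like $\mathcal{V}_n$ rather than being uniformly bounded, so one cannot use the simplest (additive) Robbins--Siegmund form and must instead carry the multiplicative factor $(1+C_1\gamma_{n+1}^2)$ with $\sum_n\gamma_n^2 < \infty$; equally delicate is the closing contradiction, where the radius $\sqrt{\mathcal{V}_\infty/2}$ of the avoidance set is random, so the uniform strict negativity of \eqref{eq33} must be invoked on a sample-path-dependent closed set. Verifying that the convergence lemma of Section~\ref{sec:sa} accommodates an unbounded state space and a quadratically growing noise bound, rather than tacitly assuming compactness or bounded increments, is the step I would treat most carefully.
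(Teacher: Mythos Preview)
Your proposal is correct and follows the same overall strategy as the paper: cast $\boldsymbol{\theta}_n=\boldsymbol{\widetilde{S}}_n/n$ as a Robbins--Monro recursion with drift $\boldsymbol{H}(\boldsymbol{x})-\boldsymbol{x}$, martingale-difference noise, and step size $1/(n+1)$, then use the downcrossing condition~\eqref{eq33} to force convergence to $\boldsymbol{x}_0$; the affine link~\eqref{eq22} does the rest. The paper's Section~\ref{sec:sa-ldrw-gerw} derives exactly your recursion (it calls the iterates $\boldsymbol{\Gamma}_n$ and the noise $\boldsymbol{e}_{n+1}$), and Lemma~\ref{lem-noiseanddrift2} records precisely the moment and growth estimates you sketch, namely $\|\boldsymbol{\gamma}(\boldsymbol{x})\|^2+\sigma^2(\boldsymbol{x})\le C(1+\|\boldsymbol{x}\|^2)$.

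The one genuine difference is the last step. The paper does not run the Robbins--Siegmund argument itself; instead it rewrites~\eqref{eq33} as $\sup_{\epsilon\le\|\boldsymbol{x}-\boldsymbol{x}_0\|\le 1/\epsilon}(\boldsymbol{x}-\boldsymbol{x}_0)^{\top}(-\boldsymbol{\gamma}(\boldsymbol{x}))<0$ for every $\epsilon>0$ and invokes Theorem~5.1.1 of Benveniste--M\'etivier--Priouret as a black box. Your approach is more self-contained: you supply the Lyapunov/almost-supermartingale calculation explicitly and close with a pathwise contradiction. That buys you independence from an external reference, at the cost of having to be careful about the point you yourself flag, the unbounded state space. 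One small tightening: in your final contradiction you only bound $\|\boldsymbol{\theta}_n-\boldsymbol{x}_0\|$ from below, but since $\mathcal{V}_n\to\mathcal{V}_\infty<\infty$ you also have an eventual upper bound, so the iterates are trapped in a compact annulus $\{\sqrt{\mathcal{V}_\infty/2}\le\|\boldsymbol{x}-\boldsymbol{x}_0\|\le\sqrt{2\mathcal{V}_\infty}\}$ on which~\eqref{eq33} applies; this is essentially the paper's annular reformulation and removes any worry about using~\eqref{eq33} on an unbounded set.
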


\begin{remark}  
Note that if $\boldsymbol{H}$ is continuous on $D_s$ and 
\[
   (\boldsymbol{x}-\boldsymbol{x}_0)^{\top}\left(\boldsymbol{H}(\boldsymbol{x}) - \boldsymbol{x}\right) < 0 \text{ for all } \boldsymbol{x} \in D_s \setminus \{\boldsymbol{x}_0\},
\]
then \eqref{eq33} automatically holds for all closed set $C \subseteq D_s\setminus \{\boldsymbol{x}_0\}$.    
\end{remark}

If ${\boldsymbol{S}_n}/{n}$ almost surely converges to $\boldsymbol{A}\boldsymbol{x}_0 + \boldsymbol{b}$ for some non-random $\boldsymbol{x}_0 \in {D}_s$, then the higher-order behavior of the multidimensional generalized elephant random walk depends on $J_{\boldsymbol{H}}(\boldsymbol{x}_0)$, the Jacobian matrix of $\boldsymbol{H}$ at $\boldsymbol{x}_0$ (assuming $\boldsymbol{H}$ to be differentiable in a neighborhood of $\boldsymbol{x}_0$). Similar to its one-dimensional counterpart, a phase transition may happen depending on the nature of $J_{\boldsymbol{H}}(\boldsymbol{x}_0)$. First, we need the following assumptions.

\begin{assumption}\label{clt:assump1}
    Let $\boldsymbol{H}$ be differentiable %
    around $\boldsymbol{x}_0$ and \[J_{\boldsymbol{H}}(\boldsymbol{x}_0) = \left(\left(\frac{\partial \boldsymbol{H}_i}{\partial \boldsymbol{x}_j}\mid _{\boldsymbol{x} = \boldsymbol{x}_0}\right)\right)_{ij}\] be the Jacobian matrix of $\boldsymbol{H}$ at $\boldsymbol{x}_0$. Let $J^{\top}_{\boldsymbol{H}}(\boldsymbol{x}_0)$ have $l$ blocks $\boldsymbol{J}_1, \ldots, \boldsymbol{J}_l$ in its Jordan canonical form, $\lambda_1, \ldots, \lambda_l$ be the diagonal elements of the respective Jordan blocks and $\kappa_1, \ldots, \kappa_l$ be the corresponding block-sizes.
    Further, we assume that $\tau := \max \{ \text{Re}(\lambda_j) : 1 \leq j \leq l \} < 1$, and denote $\kappa := \max \left\{\kappa_j : \operatorname{Re}\left(\lambda_j\right) = \tau\right\}$.
\end{assumption}

\begin{remark}
    Let $J^{\top}_{\boldsymbol{H}}(\boldsymbol{x}_0) = \boldsymbol{R}\boldsymbol{J}\boldsymbol{Q}^{\top}$ be the Jordan decomposition of $J^{\top}_{\boldsymbol{H}}(\boldsymbol{x}_0)$ with $\boldsymbol{Q}^{\top} = \boldsymbol{R}^{-1}$. Further, assume 
    \begin{align*}
        \boldsymbol{J} = \begin{bmatrix}
            \boldsymbol{J}_1 & 0 & \ldots & 0 \\ 0 & \boldsymbol{J}_2 & \ldots & 0 \\ \vdots & \vdots & \vdots & \vdots \\ 0 & \ldots & 0 & \boldsymbol{J}_l 
        \end{bmatrix} \quad \text{with} \quad \boldsymbol{J}_j = \begin{bmatrix}
            \lambda_j & 1 & 0 & \ldots & 0 \\ 0 & \lambda_j & 1 & \ldots & 0 \\ \vdots & \vdots & \vdots & \vdots \\ 0 & \ldots & 0 & 0 & \lambda_j
        \end{bmatrix}_{\kappa_j \times \kappa_j}, \quad 1 \leq j \leq l.
    \end{align*}
    We denote corresponding block-decompositions of $\boldsymbol{R}$ and $\boldsymbol{Q}$ in the following way:
    \[
       \boldsymbol{R} = \begin{bmatrix}
            \boldsymbol{R}_1 & \boldsymbol{R}_2 & \ldots & \boldsymbol{R}_l\end{bmatrix}, \quad \boldsymbol{Q} = \begin{bmatrix}
            \boldsymbol{Q}_1 & \boldsymbol{Q}_2 & \ldots & \boldsymbol{Q}_l\end{bmatrix},
    \]
    where each $\boldsymbol{R}_j$ and $\boldsymbol{Q}_j$ have $\kappa_j$ columns for $j \in [l]$. For $j \in [l]$, %
    we denote the first (respectively, the last or $\kappa_j$-th) %
    column of $\boldsymbol{R}_j$ (respectively, $\boldsymbol{Q}_j$) by $\boldsymbol{R}^*_{j}$ (respectively, $\boldsymbol{Q}^*_{j}$). Note that $\boldsymbol{R}^*_{j}$ (respectively, $\boldsymbol{Q}^*_{j}$) is the right (respectively, the left) eigenvector in the $j$-th block.
\end{remark}

\begin{remark}\label{rem:clt:assump1-1d}
    If $s = 1$ (which automatically implies $d=1$), then Assumption~\ref{clt:assump1} is equivalent to assuming that $H$ is differentiable in a neighborhood of $x_0$ with $\tau = H'(x_0) < 1$. Also, in this case, $\kappa = 1$.
\end{remark}

Note that, assumption~\ref{clt:assump1} implies that
\[
\boldsymbol{H}(\boldsymbol{x}) = \boldsymbol{H}(\boldsymbol{x}_0) + J_{\boldsymbol{H}}\left(\boldsymbol{x}_0\right)\left(\boldsymbol{x} - \boldsymbol{x}_0\right) + o\left(\|\boldsymbol{x} - \boldsymbol{x}_0\|\right) \quad \text{as } \boldsymbol{x} \to \boldsymbol{x}_0.
\]
However, we need a slightly stronger condition on $\boldsymbol{H}$ for certain results.
\begin{assumption}\label{clt:assump2} For some $\delta > 0$, we have
    \[\boldsymbol{H}(\boldsymbol{x}) = \boldsymbol{H}(\boldsymbol{x}_0) + J_{\boldsymbol{H}}\left(\boldsymbol{x}_0\right)\left(\boldsymbol{x} - \boldsymbol{x}_0\right)  + o\left(\|\boldsymbol{x} - \boldsymbol{x}_0\|^{1+\delta}\right) \quad \text{as } \boldsymbol{x} \to \boldsymbol{x}_0.\] 
\end{assumption}

Finally, denote $\boldsymbol{\Sigma} = \mathbb{E}\left(\boldsymbol{Y}_1\boldsymbol{Y}^{\top}_1\right)$ and 
\begin{align}\label{eq:sigma}
   \boldsymbol{\Sigma}_0 := \sum_{i=1}^{r}\mathcal{P}_i(\boldsymbol{x}_0)\boldsymbol{\Sigma}^{(\pi_i)}
   - \boldsymbol{x}_0\boldsymbol{x}^{\top}_0 = \begin{bmatrix}
       \mathcal{P}_1(\boldsymbol{x}_0)\boldsymbol{\Sigma}_{(\pi_1)} && \ldots && \boldsymbol{0} 
       \\ \vdots && \ddots && \vdots
       \\ \boldsymbol{0} && \ldots && \mathcal{P}_r(\boldsymbol{x}_0)\boldsymbol{\Sigma}_{(\pi_r)}
   \end{bmatrix} - \boldsymbol{x}_0\boldsymbol{x}^{\top}_0.
\end{align}

\begin{remark}
If $\pi_r = \emptyset$, then the matrix representation of $\boldsymbol{\Sigma}_0$ in \eqref{eq:sigma} should be interpreted as 
\begin{align*}\begin{bmatrix}
       \mathcal{P}_1(\boldsymbol{x}_0)\boldsymbol{\Sigma}_{(\pi_1)} && \ldots && \boldsymbol{0} 
       \\ \vdots && \ddots && \vdots
       \\ \boldsymbol{0} && \ldots && \mathcal{P}_{r-1}(\boldsymbol{x}_0)\boldsymbol{\Sigma}_{(\pi_{r-1})}
   \end{bmatrix} - \boldsymbol{x}_0\boldsymbol{x}^{\top}_0.
\end{align*}
\end{remark}

The following result depicts the behavior of ﬂuctuations of the walk around the almost sure
limit of Theorem~\ref{thm:slln-gerw}. 

\begin{theorem}[Fluctuations around the almost sure limit]\label{thm:clt-gerw}
Let the assumptions of Theorem~\ref{thm:slln-gerw} hold so that almost surely ${\boldsymbol{S}_n}/{n}$ converges to $\boldsymbol{A}\boldsymbol{x}_0 + \boldsymbol{b}$ for some $\boldsymbol{x}_0 \in {D}_s$. Moreover, let Assumption~\ref{clt:assump1} hold. Then we have the following.

\begin{enumerate}
\item[a)] Diffusive regime: If $\tau < {1}/{2}$, 
\begin{align}\label{eq:diffconv}
    \sqrt{n}\left(\frac{\boldsymbol{S}_n}{n} - \boldsymbol{A}\boldsymbol{x}_0 - \boldsymbol{b}\right) \stackrel{d}{\to} N\left(0,\boldsymbol{A}\boldsymbol{\Sigma}_1\boldsymbol{A}^{\top}\right),
\end{align}
where,
\begin{align}\label{eq:sigma1}
   \boldsymbol{\Sigma}_1 = \int_{0}^{\infty}e^{\left(J_{\boldsymbol{H}}\left(\boldsymbol{x}_0\right) -\mathbb{I}_s/2\right)u}\boldsymbol{\Sigma}_0\left(e^{\left(J_{\boldsymbol{H}}\left(\boldsymbol{x}_0\right) -\mathbb{I}_s/2\right)u}\right)^{\top} du.
\end{align}
\end{enumerate}

For $\tau \geq 1/2$, let additionally Assumption~\ref{clt:assump2} hold.

\begin{enumerate}
\item[b)] Critical regime: If $\tau = {1}/{2}$, 
\begin{align}\label{eq:cricconv}
    \frac{\sqrt{n}}{(\log n)^{{\kappa} - 1/2}}\left(\frac{\boldsymbol{S}_n}{n} - \boldsymbol{A}\boldsymbol{x}_0 - \boldsymbol{b}\right) \stackrel{d}{\to} N\left(0,\boldsymbol{A}\boldsymbol{\Sigma}_2\boldsymbol{A}^{\top}\right),
\end{align}
where,
\begin{align}\label{eq:sigma2}
   \boldsymbol{\Sigma}_2 = \lim_{n\to\infty}\frac{1}{(\log n)^{2\kappa - 1}}\int_{0}^{\log n}e^{\left(J_{\boldsymbol{H}}\left(\boldsymbol{x}_0\right) -\mathbb{I}_s/2\right)u}\boldsymbol{\Sigma}_0\left(e^{\left(J_{\boldsymbol{H}}\left(\boldsymbol{x}_0\right) -\mathbb{I}_s/2\right)u}\right)^{\top} du.
\end{align}
\item[c)] Supercritical regime: If $1/2 < \tau < 1$, 
then there exist random variables ${\xi}_1, {\xi}_2, \ldots$ such that
\begin{align}\label{eq:sdiffconv}
    \frac{n^{1-\tau}}{(\log n)^{{\kappa} - 1}}\left(\frac{\boldsymbol{S}_n}{n} - \boldsymbol{A}\boldsymbol{x}_0 - \boldsymbol{b}\right) - \sum_{j : \operatorname{Re}\left(\lambda_j\right) = \tau, \kappa_j = \kappa}e^{\iota\operatorname{Im}\left(\lambda_j\right)\log n}{\xi}_j\boldsymbol{A}\boldsymbol{R}^*_{j} \stackrel{a.s.}{\to} \boldsymbol{0}.
\end{align}
\end{enumerate}
\end{theorem}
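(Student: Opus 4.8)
\textbf{Proof proposal for Theorem~\ref{thm:clt-gerw}.}

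\emph{Setup via stochastic approximation.} The plan is to recognize that the auxiliary walk $(\boldsymbol{\widetilde{S}}_n)_{n\geq0}$ satisfies a Robbins--Monro type recursion. Writing $\boldsymbol{\theta}_n := \boldsymbol{\widetilde{S}}_n/n$, a direct computation from $\boldsymbol{\widetilde{S}}_{n+1} = \boldsymbol{\widetilde{S}}_n + \boldsymbol{\widetilde{X}}_{n+1}$ gives
\begin{equation*}
\boldsymbol{\theta}_{n+1} = \boldsymbol{\theta}_n + \frac{1}{n+1}\bigl(\boldsymbol{H}(\boldsymbol{\theta}_n) - \boldsymbol{\theta}_n\bigr) + \frac{1}{n+1}\Delta\boldsymbol{M}_{n+1},
\end{equation*}
where $\Delta\boldsymbol{M}_{n+1} := \boldsymbol{\widetilde{X}}_{n+1} - \mathbb{E}(\boldsymbol{\widetilde{X}}_{n+1}\mid \mathcal{F}_n)$ is a martingale difference and $\mathbb{E}(\boldsymbol{\widetilde{X}}_{n+1}\mid\mathcal{F}_n) = \boldsymbol{H}(\boldsymbol{\theta}_n)$ by the definition \eqref{eq:H} of $\boldsymbol{H}$. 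First I would compute the conditional covariance of $\Delta\boldsymbol{M}_{n+1}$ and show it converges to $\boldsymbol{\Sigma}_0$ of \eqref{eq:sigma}: since $\boldsymbol{\theta}_n \to \boldsymbol{x}_0$ almost surely by Theorem~\ref{thm:slln-gerw}, and using $\mathbb{E}(\|\boldsymbol{Y}_1\|^2)<\infty$, the noise covariance stabilizes at the value read off from $\boldsymbol{\Sigma}$ restricted to the blocks $\pi_i$, minus $\boldsymbol{x}_0\boldsymbol{x}_0^\top$. This identifies $\boldsymbol{\Sigma}_0$ as the asymptotic noise covariance, which is exactly what should drive the Gaussian limit.

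\emph{Linearization and reduction to the supporting lemmas.} Next I would linearize the drift around $\boldsymbol{x}_0$. Under Assumption~\ref{clt:assump1}, $\boldsymbol{H}(\boldsymbol{\theta}_n) - \boldsymbol{\theta}_n = (J_{\boldsymbol{H}}(\boldsymbol{x}_0) - \mathbb{I}_s)(\boldsymbol{\theta}_n - \boldsymbol{x}_0) + o(\|\boldsymbol{\theta}_n - \boldsymbol{x}_0\|)$, so that the centered error $\boldsymbol{e}_n := \boldsymbol{\theta}_n - \boldsymbol{x}_0$ obeys a linear stochastic recursion governed by the matrix $J_{\boldsymbol{H}}(\boldsymbol{x}_0) - \mathbb{I}_s$, whose eigenvalues have real parts $\operatorname{Re}(\lambda_j) - 1$, and in particular the critical exponent comparing the drift contraction rate to the step size $1/n$ is precisely $\tau$ versus $1/2$. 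This is where the three regimes separate: the eigenvalue gap $\tau - 1/2$ determines whether the martingale noise or the deterministic contraction dominates. At this point the strategy is to invoke the general stochastic-approximation limit theorems — which I expect to be stated and proved as supporting lemmas in Section~\ref{sec:sa} — applied to the recursion for $\boldsymbol{e}_n$ with noise covariance $\boldsymbol{\Sigma}_0$ and drift matrix $J_{\boldsymbol{H}}(\boldsymbol{x}_0)$. The diffusive case (a) follows from a martingale CLT giving asymptotic covariance $\boldsymbol{\Sigma}_1$ as the solution of a Lyapunov-type integral \eqref{eq:sigma1}; the critical case (b) requires the refined normalization by $(\log n)^{\kappa - 1/2}$ coming from the largest Jordan block of size $\kappa$ among the critical eigenvalues, producing $\boldsymbol{\Sigma}_2$; and the superdiffusive case (c) is an almost-sure convergence statement where the normalized error, after subtracting the oscillatory terms $e^{\iota\operatorname{Im}(\lambda_j)\log n}\boldsymbol{A}\boldsymbol{\xi}_j$, vanishes.

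\emph{Transfer to $\boldsymbol{S}_n$ and the Jordan block bookkeeping.} Finally, I would transfer the conclusions for $\boldsymbol{e}_n = \boldsymbol{\theta}_n - \boldsymbol{x}_0$ back to $\boldsymbol{S}_n$ via the affine relation \eqref{eq22}: since $\boldsymbol{S}_n/n - \boldsymbol{A}\boldsymbol{x}_0 - \boldsymbol{b} = \boldsymbol{A}(\boldsymbol{\theta}_n - \boldsymbol{x}_0) = \boldsymbol{A}\boldsymbol{e}_n$, applying the continuous linear map $\boldsymbol{A}$ converts the covariance $\boldsymbol{\Sigma}_i$ into $\boldsymbol{A}\boldsymbol{\Sigma}_i\boldsymbol{A}^\top$ and the random vectors $\boldsymbol{\xi}_j$ into $\boldsymbol{A}\boldsymbol{\xi}_j$, exactly matching the stated limits. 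The main obstacle, I expect, will be the \textbf{critical and superdiffusive regimes with nontrivial Jordan structure}: when $J_{\boldsymbol{H}}(\boldsymbol{x}_0)$ is not diagonalizable, the matrix exponential $e^{(J_{\boldsymbol{H}}(\boldsymbol{x}_0) - \mathbb{I}_s/2)u}$ acquires polynomial-in-$u$ factors whose degree is controlled by the block sizes $\kappa_j$. Pinning down the correct power $(\log n)^{\kappa - 1/2}$ in case (b) and $(\ln n)^{\kappa - 1}$ in case (c), as well as isolating the oscillatory complex-eigenvalue contributions $e^{\iota\operatorname{Im}(\lambda_j)\log n}$ and showing the existence of the limiting random variables $\boldsymbol{\xi}_j$, demands careful asymptotic analysis of these matrix exponentials and a martingale convergence argument in the superdiffusive case. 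Everything below the eigenvalue threshold $\tau$ (i.e.\ eigenvalues with smaller real part or smaller block size) should be shown to contribute only lower-order terms and hence be negligible after the stated normalization. The symmetric one-dimensional checks in Remark~\ref{rem:clt:assump1-1d} (where $\kappa=1$) provide a useful sanity check that the exponents collapse to the familiar $\sqrt{n/\log n}$ and $n^{1-\eta}$ scalings of Theorem~\ref{thm:clt}.
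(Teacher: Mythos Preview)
Your proposal is correct and follows essentially the same route as the paper: cast $\boldsymbol{\widetilde{S}}_n/n$ as a stochastic approximation, verify that the martingale-difference noise has asymptotic covariance $\boldsymbol{\Sigma}_0$ and satisfies a Lindeberg condition, then apply general stochastic-approximation limit theorems in the three regimes and transfer back via the affine map $\boldsymbol{A}$. The only discrepancy is one of sourcing: the supporting lemmas in Section~\ref{sec:sa} do \emph{not} prove the limit theorems themselves but only verify the hypotheses (Lindeberg, covariance stabilization, moment bounds), and the paper then invokes Theorems~2.1--2.3 of Zhang~\cite{zhang2016central} as a black box for the diffusive, critical, and superdiffusive conclusions --- so the Jordan-block bookkeeping and the construction of the $\boldsymbol{\xi}_j$ that you flag as the main obstacle are entirely outsourced rather than carried out in the paper.
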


\begin{remark}\label{rem:new3}
    Using the formula just before Theorem 2.2 of \cite{zhang2016central}, we get the following alternate form of $\boldsymbol{\Sigma}_2$:
    \[
      \boldsymbol{\Sigma}_2 = \frac{1}{((\kappa-1)!)^2(2\kappa-1)}\sum_{\substack{j_1, j_2 : \lambda_{j_1 } = \lambda_{j_1}, \\ \operatorname{Re}\left(\lambda_{j_1}\right) = 1/2, \kappa_{j_1} = \kappa_{j_2} = \kappa}} \boldsymbol{Q}^*_{j_1} \left(\boldsymbol{R}^*_{{j_1}}\right)^{\top} \boldsymbol{\Sigma}_0 \boldsymbol{R}^*_{{j_2}}\left(\boldsymbol{Q}^*_{{j_2}}\right)^{\top}.
    \]
\end{remark}

\begin{remark}\label{rem:sd}
 If all the eigenvalues $\lambda_j$ of $J_{\boldsymbol{H}}\left(\boldsymbol{x}_0\right)$ are real (this is the case when $J_{\boldsymbol{H}}\left(\boldsymbol{x}_0\right)$ is real symmetric, e.g. in Models~\ref{eg:ldrw}, \ref{eg:gmrw}, \ref{eg:gerwrs} and \ref{eg:gmerw}) and $1/2 < \tau < 1$, then we conclude that for the {random vector $\boldsymbol{\xi} = \sum_{j:\lambda_j=\tau, \kappa_j=\kappa} \xi_j \boldsymbol{R}^*_j$} such that
 \begin{align}\label{eq:re-eig}
    \frac{n^{1-\tau}}{(\log n)^{{\kappa} - 1}}\left(\frac{\boldsymbol{S}_n}{n} - \boldsymbol{A}\boldsymbol{x}_0 - \boldsymbol{b}\right) \stackrel{a.s.}{\to}\boldsymbol{A}{\boldsymbol{\xi}}.
\end{align}
In particular, when $s = 1$ and hence $\kappa = 1$ (e.g., Models~\ref{eg:ldrw} and \ref{eg:gmrw}), then 
\begin{align}\label{eq:re-eig1}
   {n^{1-\tau}}\left(\frac{{S}_n}{n} -{A}{x}_0 - {b}\right) \stackrel{a.s.}{\to}{A}{\xi}.
\end{align}
\end{remark}

\subsubsection{Some Additional Results in the Case $s=1$} When the underlying auxiliary random walk $(\boldsymbol{\widetilde{S}}_n)$ is one-dimensional (i.e. $s = 1$, e.g., Models~\ref{eg:ldrw} and \ref{eg:gmrw}), Theorems~\ref{thm:lil-gerw-cric} and \ref{thm:super-dev1m} describe the rate of almost sure convergences in \eqref{eq:asconv1} and \eqref{eq:re-eig1}, respectively. 

\begin{theorem}[Law of iterated logarithm]\label{thm:lil-gerw-cric}
    Let $s = 1$, $\mathbb{E}\left({Y}_1^{2+\epsilon}\right) < \infty$ for some $\epsilon > 0$ and the assumptions of Theorem~\ref{thm:slln-gerw} hold so that almost surely ${{S}_n}/{n}$ converges to ${A}{x}_0 + {b}$ for some ${x}_0 \in {D}_1$. Moreover, let Assumption~\ref{clt:assump1} (in particular, Remark~\ref{rem:clt:assump1-1d}) hold with $\tau \leq 1/2$. Then the following hold.
\begin{enumerate}
\item[a)] Diffusive regime: if $\tau < 1/2$, we have almost surely
\begin{align}\label{eq:lil-gerw1}
    &\limsup_{n \to \infty}\sqrt{\frac{n}{2\log\log n}} \left(\frac{{S}_n}{n} - {A}{x}_0 - {b}\right) \nonumber
    \\ = - &\liminf_{n \to \infty}\sqrt{\frac{n}{2\log\log n}} \left(\frac{{S}_n}{n} - {A}{x}_0 - {b}\right) = A\sqrt{\frac{x_0\mu^{-1}{\Sigma}-x^2_0}{1-2\tau}}. 
\end{align}
\item[b)] Critical regime: if $\tau = 1/2$ and ${H}$ is further twice differentiable at ${x}_0$, we have almost surely
\begin{align}\label{eq:lil-gerw2}
    &\limsup_{n \to \infty}\left(\frac{n}{2\log n\log\log\log n}\right)^{1/2} \left(\frac{{S}_n}{n} - {A}{x}_0 - {b}\right) \nonumber \\ = - &\liminf_{n \to \infty}\left(\frac{n}{2\log n\log\log\log n}\right)^{1/2} \left(\frac{{S}_n}{n} - {A}{x}_0 - {b}\right) = A\sqrt{{x_0\mu^{-1}{\Sigma}-x^2_0}}. 
\end{align}
\end{enumerate}
\end{theorem}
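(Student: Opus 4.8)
The plan is to pass from $S_n$ to the scalar auxiliary walk $\widetilde S_n$, recast its time-normalized version as a stochastic approximation recursion, and read off the law of the iterated logarithm from a martingale law of the iterated logarithm. Since $S_n = A\widetilde S_n + nb$, we have $S_n/n - (Ax_0+b) = A\xi_n$ with $\xi_n := \widetilde S_n/n - x_0$, so it suffices to prove both displays for $\xi_n$ and multiply the constants by $A$. Writing $\mathcal F_n := \sigma(\widetilde X_1,\dots,\widetilde X_n)$, the recursion $\widetilde S_{n+1} = \widetilde S_n + \widetilde X_{n+1}$ gives, after dividing by $n+1$,
\begin{equation*}
\xi_{n+1} - \xi_n = \frac{1}{n+1}\bigl(H(\widetilde S_n/n) - \widetilde S_n/n\bigr) + \frac{1}{n+1}\,\Delta_{n+1}, \qquad \Delta_{n+1} := \widetilde X_{n+1} - H(\widetilde S_n/n),
\end{equation*}
where $(\Delta_{n+1})$ is an $(\mathcal F_n)$-martingale difference. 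Because $s=1$ forces $r=2$, $\pi_1=\{1\}$, $\pi_2=\emptyset$ (as in Models~\ref{eg:ldrw} and~\ref{eg:gmrw}), one has $\widetilde X_{n+1} = Y_{n+1}$ with probability $\mathcal P_1(\widetilde S_n/n)$ and $\widetilde X_{n+1}=0$ otherwise, whence $\E(\widetilde X_{n+1}\mid\mathcal F_n)=\mathcal P_1(\widetilde S_n/n)\mu = H(\widetilde S_n/n)$ and $\E(\Delta_{n+1}^2\mid\mathcal F_n) = \mathcal P_1(\widetilde S_n/n)\Sigma - H(\widetilde S_n/n)^2$. Since $\widetilde S_n/n\to x_0$ a.s.\ by Theorem~\ref{thm:slln-gerw} and $\mathcal P_1(x_0)=x_0/\mu$, this conditional variance converges a.s.\ to $\Sigma_0 := x_0\mu^{-1}\Sigma - x_0^2$, the scalar version of \eqref{eq:sigma} and exactly the quantity under the square roots in \eqref{eq:lil-gerw1} and \eqref{eq:lil-gerw2}.

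Next I would linearize and introduce an integrating factor. By Remark~\ref{rem:clt:assump1-1d}, $H(\widetilde S_n/n) - \widetilde S_n/n = (\tau-1)\xi_n + R_n$ with $R_n = o(\xi_n)$ in the diffusive regime, and, using twice differentiability, $R_n = \tfrac12 H''(x_0)\xi_n^2 + o(\xi_n^2)$ in the critical regime. Setting $\alpha_n := \prod_{k=2}^{n}\bigl(1-(1-\tau)/k\bigr)^{-1}$, so that $\alpha_n \asymp n^{1-\tau}$, and $M_n := \alpha_n\xi_n$, the defining relation $\alpha_{n+1}(1-(1-\tau)/(n+1)) = \alpha_n$ cancels the deterministic drift and yields
\begin{equation*}
M_{n+1} = M_n + \frac{\alpha_{n+1}}{n+1}\,\Delta_{n+1} + \frac{\alpha_{n+1}}{n+1}\,R_n,
\end{equation*}
exhibiting $M_n$ as a martingale plus a remainder. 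Its predictable quadratic variation is $\langle M\rangle_n = \sum_k (\alpha_{k+1}/(k+1))^2\,\E(\Delta_{k+1}^2\mid\mathcal F_k)$; using $\alpha_{k+1}/(k+1)\asymp k^{-\tau}$ and the a.s.\ limit $\Sigma_0$ above, $\langle M\rangle_n$ grows like a constant multiple of $n^{1-2\tau}$ when $\tau<1/2$ and like a constant multiple of $\log n$ when $\tau=1/2$, the constants combining with the $\alpha_n$-asymptotics so that only $\Sigma_0/(1-2\tau)$, respectively $\Sigma_0$, survives in the final answer.

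I would then invoke a martingale law of the iterated logarithm of Stout/Heyde--Scott type, whose hypotheses are the a.s.\ convergence of the normalized conditional variance (already in hand) and a large-increment/Lindeberg condition; the latter is verified from $\E(Y_1^{2+\epsilon})<\infty$, which bounds $\sum_k \E(|(\alpha_{k+1}/(k+1))\Delta_{k+1}|^{2+\epsilon})\langle M\rangle_k^{-(2+\epsilon)/2}$ and thereby controls rare large increments. This gives $\limsup_n M_n/\sqrt{2\langle M\rangle_n\log\log\langle M\rangle_n} = -\liminf_n M_n/\sqrt{2\langle M\rangle_n\log\log\langle M\rangle_n} = 1$ a.s. Substituting $\alpha_n\asymp n^{1-\tau}$ and the growth of $\langle M\rangle_n$ and simplifying the iterated logarithms ($\log\log(n^{1-2\tau})\sim\log\log n$ in the diffusive case and $\log\log(c\log n)\sim\log\log\log n$ in the critical case) converts these into the normalizations $(n/2\log\log n)^{1/2}$ and $(n/2\log n\log\log\log n)^{1/2}$ and the constants $\sqrt{\Sigma_0/(1-2\tau)}$ and $\sqrt{\Sigma_0}$; multiplying by $A$ produces \eqref{eq:lil-gerw1} and \eqref{eq:lil-gerw2}. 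This mirrors the route for the classical-type statement in Theorem~\ref{thm:lil}, now carrying the general $\mu$, $\Sigma$, $A$, $b$.

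The main obstacle is showing the remainder $\sum_{k\le n}(\alpha_{k+1}/(k+1))R_k$ is a.s.\ negligible at the iterated-logarithm scale, because controlling $R_k$ requires an a priori a.s.\ rate for $\xi_k$ that one is in effect trying to prove. In the critical regime the factor $R_k = O(\xi_k^2)$ is genuinely second order: even a crude polynomial bound $|\xi_k| = O(k^{-1/2+\epsilon})$ makes $\sum_k k^{-1/2}|R_k|$ summable, so the remainder is $O(1)$ and harmless, which is exactly why twice differentiability is assumed here. The diffusive regime is the delicate case: with only $R_k = o(\xi_k)$, a crude bound is insufficient, and one must first establish the sharp order $|\xi_k| = O(\sqrt{\log\log k/k})$ (via a self-improving/bootstrap estimate on the same recursion) and then exploit the genuine $o(\cdot)$ to conclude that the remainder is an arbitrarily small fraction of $\sqrt{\langle M\rangle_n\log\log\langle M\rangle_n}$. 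Making this feedback rigorous, so that the remainder does not perturb the LIL constant, is the crux of the argument.
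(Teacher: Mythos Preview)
Your approach differs substantially from the paper's. The paper does not carry out a martingale LIL from scratch; instead, once the recursion \eqref{eq-sa2} is in place and Lemmas~\ref{lem-noiseanddrift2}--\ref{lem-noise2} verify the standing hypotheses (step-size conditions, martingale-difference noise, conditional variance convergence, $(2+\epsilon)$-moment bound), the proof simply computes $\sigma^2(x_0)=x_0\mu^{-1}\Sigma - x_0^2$, cites Theorem~1 of \cite{gaposkin1975law} for $\tau<1/2$ and Theorem~1 of \cite{krasulina1996estimate} for $\tau=1/2$, reads off the LIL for $\Gamma_n - x_0$, and transports it to $S_n/n$ via the affine map $S_n/n = A\Gamma_n + b$. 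Your integrating-factor/martingale-LIL/remainder-bootstrap programme is essentially an outline of how one would \emph{prove} those cited stochastic-approximation LIL theorems, so you are rederiving rather than invoking them.

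What each buys: the paper's route is much shorter and offloads the delicate remainder analysis you correctly flagged---the feedback between the a priori rate on $\xi_k$ and the $R_k$ sum in the diffusive case---entirely to the existing literature. Your route is more self-contained and makes transparent exactly where the $(2+\epsilon)$-moment and, in the critical regime, the twice-differentiability enter, but the diffusive-regime remainder bound you leave as ``the crux'' is genuinely nontrivial and would require a careful bootstrap (or a stopping-time truncation) to close; as written, that part is a sketch rather than a proof. A minor side point: $s=1$ does not literally force $r=2$ (one may have $r=1$, $\pi_1=\{1\}$, giving $H\equiv\mu$ and $\tau=0$), though this i.i.d.\ case is trivially covered by the same analysis.
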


\begin{remark}\label{rem:new2}
    When $s = 1$, using \eqref{eq:H}, we have $H(x) = \mathcal{P}_1(x)\mu \leq \mu$ for all $x \in D_1$. Therefore, in this case, for all $x \in D_1$, $\mu H(x) \leq \mu^2 \leq \Sigma$ and so ${H(x)\mu^{-1}{\Sigma}-H(x)^2} \geq 0$ as $H(x) \geq 0$. In particular, ${x_0\mu^{-1}{\Sigma}-x^2_0} \geq 0$.
\end{remark}

\begin{theorem}\label{thm:super-dev1m}
Let $s = 1$, $\mathbb{E}\left({Y}_1^{2+\epsilon}\right) < \infty$ for some $\epsilon > 0$ and the assumptions of Theorem~\ref{thm:slln-gerw} hold so that almost surely ${{S}_n}/{n}$ converges to ${A}{x}_0 + {b}$ for some ${x}_0 \in {D}_1$. Moreover, let Assumption~\ref{clt:assump1} (in particular, Remark~\ref{rem:clt:assump1-1d}) hold with $1/2 < \tau < 1$. Further, assume that ${H}$ is $(m+1)$-times differentiable in a neighborhood of $x_0$ 
for some $m \geq 1$ and ${\xi}$ be as in \eqref{eq:re-eig1}.
Then there exist constants $b_1 = 1, b_2, \ldots, b_{m+1}$, recursively given by
\begin{align*}b_{j+1} &= -\frac{1}{j(1-\tau)}\sum_{i=2}^{j+1}\frac{H^{(i)}(x_0)}{i!}\sum_{(c_1, \ldots, c_i) \in \mathcal{P}_{i,j+1}}\nu_{(c_1, \ldots, c_i)}b_{c_1}\ldots b_{c_i}, \quad j = 1, \ldots, m,\nonumber\end{align*} 
where, for $1 \leq i \leq t$, $\mathcal{P}_{i,t}$ and $\nu_{(c_1, \ldots, c_i)}$ are given by \eqref{eq:combn}, 
such that the following hold.
\begin{enumerate}
\item[a)] If \[ m \geq \frac{\tau - 1/2}{1-\tau}, \text{ then for } m_0 = \left\lfloor\frac{\tau - 1/2}{1-\tau}\right\rfloor,\]
almost surely,
\begin{align} \label{eq:slil-gerw}
    &\quad\quad\limsup_{n\to\infty}\sqrt{\frac{n}{2\log \log n}}\left(\left(\frac{{S}_n}{n} - {A}{x}_0 - {b}\right) - {A}\sum_{j = 0}^{m_0} b_{j+1}\left(\frac{\xi}{n^{1-\tau}}\right)^{j+1}
    \right) \nonumber\\
    &=-\liminf_{n\to\infty}\sqrt{\frac{n}{2\log \log n}}\left(\left(\frac{{S}_n}{n} - {A}{x}_0 - {b}\right) - {A}\sum_{j = 0}^{m_0} b_{j+1}\left(\frac{\xi}{n^{1-\tau}}\right)^{j+1} \right) \nonumber\\
   &= A\sqrt{\frac{{{x_0\mu^{-1}{\Sigma}-x^2_0}}}{2\tau -1}},
\end{align}
and 
\begin{align} \label{eq:sclt-gerw}
    \sqrt{n}\left(\left(\frac{{S}_n}{n} - {A}{x}_0 - {b}\right) - {A}\sum_{j = 0}^{m_0} b_{j+1}\left(\frac{\xi}{n^{1-\tau}}\right)^{j+1}\right) \stackrel{d}{\to} {N}\left(0, \frac{{A^2}\left({{x_0\mu^{-1}{\Sigma}-x^2_0}}\right)}{2\tau -1}\right).
\end{align}
\item[b)] If \[ m < \frac{\tau - 1/2}{1-\tau},\] 
then almost surely
\begin{align} \label{eq:s-gerw}
   \left(\frac{{S}_n}{n} - {A}{x}_0 - {b}\right) - {A}\sum_{j = 0}^{m} b_{j+1}\left(\frac{\xi}{n^{1-\tau}}\right)^{j+1} 
    = o\left({n}^{-(1-\tau){(m+1)}}\right)
\end{align}
\end{enumerate}
\end{theorem}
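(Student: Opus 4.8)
The plan is to exploit the Robbins--Monro structure of the auxiliary walk and to peel off the expansion one order at a time, in direct analogy with the proof of Theorem~\ref{thm:super-dev1}, the scalar $H'(x_0)=\tau$ now playing the role of $\eta$ and the centred increments of the general $D_1$-valued steps $Y_k$ replacing the $\pm1$ increments. Since $s=d=1$, \eqref{eq22} gives $S_n/n-Ax_0-b=A\,z_n$ with $z_n:=\widetilde{S}_n/n-x_0$, so it suffices to expand $z_n$ and multiply by $A$ at the end. Writing $\theta_n=\widetilde{S}_n/n$ and $\Delta M_{n+1}:=\widetilde{X}_{n+1}-H(\theta_n)$, the identity $\theta_{n+1}-\theta_n=\frac{1}{n+1}(\widetilde{X}_{n+1}-\theta_n)$ yields
\[
   z_{n+1}=z_n+\frac{1}{n+1}\bigl(H(\theta_n)-\theta_n\bigr)+\frac{1}{n+1}\Delta M_{n+1},
\]
where $(\Delta M_{n+1})$ is a martingale-difference sequence whose conditional second moment converges almost surely to $\Sigma_0=x_0\mu^{-1}\Sigma-x_0^2$ by \eqref{eq:sigma}, and $\mathbb{E}(Y_1^{2+\epsilon})<\infty$ supplies a uniform Lyapunov bound. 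A Taylor expansion about $x_0$, using $H(x_0)=x_0$ and $H'(x_0)=\tau$, gives $H(\theta_n)-\theta_n=-(1-\tau)z_n+\sum_{k=2}^{m+1}\frac{H^{(k)}(x_0)}{k!}z_n^{\,k}+o(z_n^{\,m+1})$, so $z_n$ solves a linear stochastic-approximation recursion of contraction rate $1-\tau$ perturbed by a higher-order deterministic drift and the martingale noise. By \eqref{eq:re-eig1} (Remark~\ref{rem:sd}, valid here because $s=1$ forces a real eigenvalue and $\kappa=1$), $n^{1-\tau}z_n\to\xi$ almost surely; this is the $j=0$ term, $b_0=1$.

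The heart of the argument is an induction on $j$. Suppose the residual $R_n^{(j-1)}:=z_n-\sum_{i=0}^{j-1}b_i\xi^{\,i+1}n^{-(i+1)(1-\tau)}$ has been shown to be of almost-sure order $n^{-j(1-\tau)}$ with a known leading coefficient. Substituting this into the recursion, the Taylor polynomial produces a new deterministic forcing term of order $n^{-(j+1)(1-\tau)-1}$; solving the linear recursion $v_{n+1}=(1-\frac{1-\tau}{n+1})v_n+(\text{forcing})$, whose particular solution is of order $n^{-(j+1)(1-\tau)}$, and matching coefficients forces a unique $b_j$ cancelling that forcing, namely the stated $b_j=-H^{(j+1)}(x_0)/\big((j+1)!(1-\tau)\big)$, and defines $R_n^{(j)}$ of the next smaller order. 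I would iterate this while the current deterministic order still dominates the intrinsic noise scale $n^{-1/2}$, i.e.\ while $(j+1)(1-\tau)<\tfrac12$; the iteration then stops precisely at $j=m_0=\lfloor(\tau-\tfrac12)/(1-\tau)\rfloor$ in case (a), where enough derivatives are available, and at $j=m$ in case (b), where the smoothness of $H$ runs out first, giving at once the deterministic bound $R_n^{(m)}=o(n^{-(1-\tau)(m+1)})$ of \eqref{eq:s-gerw}.

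For case (a) the final step is to identify $R_n^{(m_0)}$, modulo almost surely negligible deterministic remainders of order $o(n^{-1/2})$, with the rescaled tail martingale $-n^{-(1-\tau)}\sum_{k>n}k^{(1-\tau)-1}\Delta M_k$. Its conditional variance is $n^{-2(1-\tau)}\Sigma_0\sum_{k>n}k^{-2\tau}\sim \Sigma_0\,n^{-1}/(2\tau-1)$, so $\sqrt{n}\,R_n^{(m_0)}$ has asymptotic variance $\Sigma_0/(2\tau-1)=(x_0\mu^{-1}\Sigma-x_0^2)/(2\tau-1)$; multiplying by $A$ yields the constants in \eqref{eq:sclt-gerw} and \eqref{eq:slil-gerw}. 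The central limit theorem \eqref{eq:sclt-gerw} then follows from a martingale CLT and the law of iterated logarithm \eqref{eq:slil-gerw} from a martingale LIL, both supplied by the supporting lemmas of Section~\ref{sec:sa}, with the Lindeberg/Lyapunov hypotheses verified from $\mathbb{E}(Y_1^{2+\epsilon})<\infty$.

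I expect the main obstacle to lie in the bookkeeping of the inductive step: one must track how the nonlinear contributions $z_n^{\,k}$ and their cross-products feed into each successive order, and simultaneously verify that the martingale noise injected at every intermediate level remains strictly below the deterministic term being peeled off, so that subtracting a deterministic multiple of $n^{-(j+1)(1-\tau)}$ is justified right up to the terminal level, at which the noise finally dominates. The second delicate point is the terminal limit theorem itself: establishing the Lindeberg condition for the tail martingale and showing that replacing $\theta_n$ by $x_0$ in its conditional variance is asymptotically harmless, so that the variance $\Sigma_0/(2\tau-1)$ emerges exactly.
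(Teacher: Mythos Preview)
Your approach is sound in outline but takes a genuinely different route from the paper. The paper does \emph{not} carry out the inductive peeling itself; instead, after rewriting $\widetilde S_n/n$ as the stochastic-approximation process~\eqref{eq-sa2} and using Lemmas~\ref{lem-noiseanddrift2} and~\ref{lem-noise2} to verify the moment and noise hypotheses, it simply invokes the existing Robbins--Monro expansion theorems of Major--R\'ev\'esz \cite{major1973limit} (for the CLT and the almost-sure order bound) and Krasulina \cite{krasulina1996estimate} (for the LIL), matching the various numbered theorems of those references to the subcases $\gamma'(x_0)\in(\tfrac{1}{2(k+1)},\tfrac{1}{2k})$, $\gamma'(x_0)=\tfrac{1}{2(k+1)}$, and $0<\gamma'(x_0)<\tfrac{1}{2(m+1)}$. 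The entire expansion, the identification of the $b_j$, and the terminal limit theorems are already packaged in those references, so the paper's proof is essentially a verification-and-citation argument. Your proposal, by contrast, is to \emph{re-derive} the content of \cite{major1973limit,krasulina1996estimate} in situ: the inductive substitution, the order-matching that produces $b_j=-H^{(j+1)}(x_0)/((j+1)!(1-\tau))$, and the tail-martingale representation $R_n^{(m_0)}\approx-n^{-(1-\tau)}\sum_{k>n}k^{-\tau}\Delta M_k$ with variance $\Sigma_0/(2\tau-1)$. This is precisely the mechanism those papers use, so your plan is correct and self-contained, at the cost of redoing nontrivial work that the paper outsources. One minor point: you write ``in direct analogy with the proof of Theorem~\ref{thm:super-dev1}'', but in the paper the logical dependence runs the other way (Theorem~\ref{thm:super-dev1} is deduced from Theorem~\ref{thm:super-dev1m} via Proposition~\ref{prop:ldrw}); and the lemmas of Section~\ref{sec:sa} supply only the noise verifications, not the martingale CLT/LIL themselves, so you would still need to invoke standard martingale limit theorems at the terminal step.
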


\begin{remark}%
    {In case of one-dimensional generalized elephant random walk of Section~\ref{eg:ldrw}, if we take $f(x) = x^2$, the walk is always diffusive. This is a special case of the generalized minimal random walk considered in Section~\ref{eg:gmrw} with $f(x)=x^2$ again, and further requiring $p=1-q$. However, if we considered the generalized minimal random walk with $f(x)=x^2$ and only requiring $p>q$, we can obtain richer asymptotic behavior. (A similar analysis can be carried out for $p<q$, which we leave out for the interested reader.) 
    
    From Theorem~\ref{thm:slln-gerw} we get \[\frac{S_n}{n} \stackrel{a.s.}{\to} s_{p,q} := \frac{1-\sqrt{1-4q(p-q)}}{2(p-q)}.\] Using Theorem~\ref{thm:clt-gerw}, we get that the walk is diffusive (respectively, critical, supercritical) if and only if $q(p-q) < 3/16$ (respectively, $q(p-q) = 3/16$, $q(p-q) > 3/16$). In particular, if $q(p-q) < {3}/{16}$, 
\begin{align*}
    \sqrt{n}\left(\frac{S_n}{n} - s_{p,q}\right) \stackrel{d}{\to} N\left(0,\frac{2q(p-q) - (1-(p-q))\left(1-\sqrt{1-4q(p-q)}\right)}{2(p-q)^2\left(2\sqrt{1-4q(p-q)}-1\right)}\right).
\end{align*}
However, if $q(p-q) = 3/16$, we have $s_{p,q} = \frac1{4(p-q)}$ and
\begin{align*}
    \sqrt{\frac{n}{\log n}}\left(\frac{S_n}{n} - s_{p,q}\right) \stackrel{d}{\to} N\left(0,\frac{4(p-q)-1}{16(p-q)^2}\right),
\end{align*}
and if $q(p-q) > 3/16$, there exists a finite random variable $W$ (that may depend on $p$, $q$ and $r$) such that
\begin{align*}
    n^{{\sqrt{1-4q(p-q)}}}\left(\frac{S_n}{n} - s_{p,q}\right) \stackrel{a.s.}{\to} W.
\end{align*}
Note that $q(p-q) \le p^2/4 <1/4$. Thus, for $3/16 < q(p-q) < 1/4$, choosing $k \geq 1$ satisfying \[\frac{1}{4} - \left(\frac{1}{4k}\right)^2 \leq q(p-q) < \frac{1}{4} - \left(\frac{1}{4k+4}\right)^2,\] an application of Theorem~\ref{thm:super-dev1m} (in particular, \eqref{eq:sclt-gerw}) yields,
\begin{align*}
    &\sqrt{n}\left(\left(\frac{{S}_n}{n} - s_{p,q}\right) - \sum_{j = 0}^{k-1} b_{j+1}\left(\frac{W}{n^{{\sqrt{1-4q(p-q)}}}}\right)^{j+1}\right) \\ &\stackrel{d}{\to} N\left(0,\frac{2q(p-q) - (1-(p-q))\left(1-\sqrt{1-4q(p-q)}\right)}{2(p-q)^2\left(1-2\sqrt{1-4q(p-q)}\right)}\right),
\end{align*}
where the sequence $\left(b_j\right)_{j \geq 1}$ is recursively defined as: 
\[
   b_1 = 1, \quad b_{j+1} = -\frac{p-q}{j\sqrt{1-4q(p-q)}} \sum_{l=1}^j b_lb_{j+1-l}, \quad j \geq 1.
\]}
\end{remark}

\section{Stochastic Approximation}\label{sec:sa}

In this section, we recall the tool of stochastic approximation, set up the relevant notations, and discuss some results, which are useful to prove our main Theorems.

\subsection{Stochastic Approximation: An Overview}\label{sec:sa-ldrw-gerw}

Let $\psi: \mathbb{R} \to \mathbb{R}$ be an unknown monotone (without loss of generality, increasing) function and we want to find a $\theta_0 \in \mathbb{R}$ that is a solution to the equation $\psi(\theta) = 0$. If we can observe the value of the function at all given points, then there are various
rapidly convergent methods available for solving this problem such as Newton's method. However, the situation is more involved when the functional value can only be observed with some random noise. To solve this problem, the following procedure was suggested in~\cite{robbins1951stochastic}. The method involves constructing a random sequence $\{\Theta_n\}_{n \geq 1}$ which is expected to converge to $\theta_0$ almost surely. At time $n = 1$, $\Theta_1$ is assigned some arbitrary value. Suppose at time $n \geq 2$, given that we have already constructed $\Theta_{n-1}$, we can only observe $\Lambda_{n-1} = \psi(\Theta_{n-1}) + \epsilon_n$, where $\epsilon_n$ is the random noise at time $n$. It is assumed that $\mathbb{E}\left(\epsilon_n \mid \Theta_1, \ldots, \Theta_{n-1}\right) = 0$ for all $n \geq 2$. Based on $\Lambda_{n-1}$, $\Theta_n$ is constructed as follows:
\begin{align}\label{eq:basic_sa}
     \Theta_n = \Theta_{n-1} - a_{n} \Lambda_{n-1},
\end{align}
where $\{a_n\}_{n\geq 2}$ is a sequence of positive numbers satisfying,
\begin{align}\label{eq:a_n}
    \sum_{n=2}^{\infty} a_n = \infty, \quad \sum_{n=2}^{\infty} a^2_n < \infty.
\end{align}
Note that the square summability condition above further gives $a_n\to0$.
This algorithm is known as stochastic approximation in the literature. The intuition behind the algorithm is as follows. Since \[\mathbb{E}\left(\Theta_n - \Theta_{n-1} \mid \Theta_1, \ldots, \Theta_{n-1}\right) = -a_n\psi(\Theta_{n-1}) \gtrless 0, \text{ accordingly as } \Theta_{n-1} \lessgtr \theta_0,\] the procedure, on average, forces the sequence $\Theta_n$ to move toward $\theta_0$. However, we must ensure that the jumps $\Theta_n - \Theta_{n-1}$ are damped, for otherwise, the sequence will oscillate around $\theta_0$, and that they do not decrease too rapidly. These are guaranteed by the conditions \eqref{eq:a_n}. Also, the square-summability condition of $\{a_n\}_{n\geq 2}$ in \eqref{eq:a_n} helps in obtaining an appropriate $L^2$-bounded martingale. It is shown in \cite{robbins1951stochastic} that $\Theta_n$ converges to $\theta_0$ almost surely under conditions \eqref{eq:a_n}, provided certain restrictions are imposed on~$\psi$. 

We can describe the above stochastic approximation algorithm as a stochastic process. Let $\left(\Omega, \mathcal{T}\right)$ be a measurable space with a filtration $\{\mathcal{T}_n\}_{n\geq 1}$. Let $\boldsymbol{\psi} : \mathcal{D} \to \mathbb{R}^l$ be a function (not necessarily monotone) on $\mathcal{D} \subseteq \mathbb{R}^l$. We say the adapted process $\left(\boldsymbol{\Theta}_n, \mathcal{T}_n\right)_{n \geq 1}$ is a stochastic approximation process on $\mathcal{D}$ if it satisfies the following recurrence relation
\begin{align}\label{eq:sa-main}
    \boldsymbol{\Theta}_{n+1} = \boldsymbol{\Theta_n} - a_n\left(\boldsymbol{\psi}(\boldsymbol{\Theta}_n) + \boldsymbol{\epsilon}_{n+1}\right), \quad n \geq 1, \quad \boldsymbol{\Theta}_1 \in \mathcal{D}, 
\end{align}
where $\{a_n\}_{n\geq 1}$ is a sequence of non-random positive numbers (called the step-size sequence) and the $\left\{\mathcal{T}_{n}\right\}_{n \geq 2}$-adapted process {$\left\{\boldsymbol{\epsilon}_{n}\right\}_{n \geq 2}$} is a random noise sequence. The function {$\boldsymbol{\psi}$} is known as the {regression function or the drift}. %
To study the convergence of stochastic approximation processes, we need %
further appropriate properties of the step-size sequence and the random noise sequence. %
Under appropriate assumptions, almost surely $\boldsymbol{\Theta_n}$ converges to $\boldsymbol{\theta}_0$ for some $\boldsymbol{\theta}_0 \in D$ satisfying $\boldsymbol{\psi}(\boldsymbol{\theta_0})=0$ (see, for example, Theorem 1 of Section 5 of \cite{MR1082341} (Part II)). Further, under additional assumptions, the limiting behavior of appropriately scaled $\boldsymbol{\Theta_n} - \boldsymbol{\theta}_0$ is known (see, for example, Theorem 2.1, Theorem 2.2 and Theorem 2.3 of \cite{zhang2016central}). In dimension $l = 1$, additional convergence properties of stochastic approximation processes can be derived which we consider in Section~\ref{sec:sa1d}.

\subsection{One-dimensional Stochastic Approximation Processes}\label{sec:sa1d}

Throughout this section, we assume that the stochastic approximation process is one-dimensional (i.e., $l=1$) and satisfies the following assumptions. 
\begin{assumption}\label{assum:n1}
    The step-size sequence $\{a_n\}_{n\geq 1}$ is given by $a_n = {1}/{(n+1)}$, there exist $\theta_0 \in \mathcal{D}$ with $\psi(\theta_0) = 0$ such that $\Theta_n \stackrel{a.s.}{\to}  \theta_0$, and the drift $\psi$ is differentiable at $\theta_0$ with $\psi'(\theta_0) >  0$.
\end{assumption}
Moreover, the noise sequence $\left({\epsilon}_{n}\right)_{n \geq 2}$ satisfy the following assumptions.
\begin{assumption}\label{assum:n2}
The noise sequence $\left({\epsilon}_{n}, \mathcal{T}_{n}\right)_{n \geq 2}$ is an $L^2$ martingale-difference sequence, such that %
for some $s > 0$, we have
\begin{align}\label{def:Sigma}
\lim_{n\to\infty}\mathbb{E}\left({\epsilon}_{n+1}^2 \mid \mathcal{T}_{n}\right) = s^2 %
\end{align}
and, for some $\beta > 0$, 
\begin{align*}%
        \sup_{n \geq 1} \mathbb{E}\left( |{\epsilon}_{n+1}|^{2+\beta} \mid \mathcal{T}_{n}\right) < \infty.
\end{align*}
\end{assumption}
Sometimes we need a little more than differentiability of $\psi$ at $\theta_0$.
\begin{assumption}\label{assum:n3}
    For some $\delta > 0$, we have
    \[\psi(\theta) = \psi(\theta_0) + \psi'(\theta_0)\left(\theta - \theta_0\right)  + o\left(|\theta - \theta_0|^{1+\delta}\right) \quad \text{as } \theta \to \theta_0.\] 
\end{assumption}
Then we have the following result regarding the fluctuation ${\Theta_n} - {\theta}_0$ of $\Theta_n$ around $\theta_0$.
\begin{theorem}\label{thm:sa1} Under Assumptions~\ref{assum:n1} and \ref{assum:n2}, ${\Theta_n} - {\theta}_0$ exhibit the following different behaviors, depending on the value of $\psi'(\theta_0)$.
\begin{itemize}
    \item[a)] If  $\psi'(\theta_0) > 1/2$, we have, almost surely,
    \begin{align} \label{eq:limsups1}
    &\limsup_{n\to \infty} \sqrt{\frac{n}{2\log\log n}}\left(\Theta_n - \theta_0\right) \nonumber %
    \\ = -&\liminf_{n\to \infty} \sqrt{\frac{n}{2\log\log n}}\left(\Theta_n - \theta_0\right) = \sqrt{\frac{s^2}{2 \psi'(\theta_0) - 1}},%
    \end{align}
    and
   \begin{align}\label{eq:diffclt26}
       \sqrt{n}\left(\Theta_n - \theta_0\right) \stackrel{d}{\to} N\left(0, \frac{s^2}{2 \psi'(\theta_0) - 1}\right),
    \end{align}
 where $s^2 > 0$ is given by~\eqref{def:Sigma}.
 
 \item[b)] If  $\psi'(\theta_0) = 1/2$ and further Assumption~\ref{assum:n3} holds, we have, almost surely,
    \begin{align}\label{eq:limsups2a}
    &\limsup_{n\to \infty} \left(\frac{n}{2\log n \log\log\log n}\right)^{1/2}\left(\Theta_n - \theta_0\right) \nonumber %
    \\ =-& \liminf_{n\to \infty} \left(\frac{n}{2\log n \log\log\log n}\right)^{1/2}\left(\Theta_n - \theta_0\right) = s,%
    \end{align}
    and
    \begin{align}\label{eq:critclt26}
       \sqrt{\frac{n}{\log n}}\left(\Theta_n - \theta_0\right) \stackrel{d}{\to} N\left(0, s^2\right),
    \end{align}
 where $s^2 > 0$ is given by~\eqref{def:Sigma}.
 \item[c)] If  $\psi'(\theta_0) < 1/2$ and further Assumption~\ref{assum:n3} holds, then there exists a finite random variable $Z$ such that
\begin{align}\label{eq:sdiffclt26}n^{\psi'(\theta_0)}\left({\Theta_n} - {\theta}_0\right) \stackrel{a.s.}{\to} Z.\end{align}
\end{itemize}
\end{theorem}

\begin{proof}
    If  $\psi'(\theta_0) > 1/2$, Assumptions~\ref{assum:n1} and~\ref{assum:n2} imply that $\{{\Theta}_n\}_{n \geq 1}$ satisfies all the assumptions of Theorem 1 of \cite{gaposkin1975law}. Also see the Remark before the quoted Theorem. Then using the result on the limit superior of certain scaled martingale in Lemma 2 of \cite{gaposkin1975law}, we obtain the value of the limit superior in~\eqref{eq:limsups1} from Theorem~1 of~\cite{gaposkin1975law}. %
Further, using the result on the limit inferior of the same scaled martingale in Lemma 2 of \cite{gaposkin1975law}, and arguing as in Theorem 1 of \cite{gaposkin1975law}, we obtain the value of the limit inferior in~\eqref{eq:limsups1}. %

In case $\psi'(\theta_0) = 1/2$, Assumptions~\ref{assum:n1}, \ref{assum:n2} and \ref{assum:n3} imply that $\{{\Theta}_n\}_{n \geq 1}$ satisfies all the assumptions of Theorem 1 of \cite{krasulina1996estimate}. Also see the Remark following the quoted Theorem. Again, using the result on the limit superior of certain scaled martingale in Lemma 2 of \cite{krasulina1996estimate}, we obtain the value of the limit superior in~\eqref{eq:limsups2a} from Theorem 1 of \cite{krasulina1996estimate}. %
Further, we get the result on the limit inferior of the said martingale from Theorem 1 of \cite{heyde}, and repeating the argument of Theorem~1 of~\cite{krasulina1996estimate} we obtain the limit inferior. %

{In a similar way, if $\psi'(\theta_0) > 1/2$ (respectively, $\psi'(\theta_0) = 1/2$, $\psi'(\theta_0) < 1/2$), \eqref{eq:diffclt26} (respectively, \eqref{eq:critclt26}, \eqref{eq:sdiffclt26}) follows from the display just after Equation (1) (respectively, Theorem 1, Theorem 2) of \cite{major1973limit}. These results also follow (see the proof of Theorem~\ref{thm:clt-gerw}) from Theorems 2.3, 2.1 and 2.2 of \cite{zhang2016central}, respectively.}
\end{proof}

If $\psi'(\theta_0) < 1/2$, then we can write \eqref{eq:sdiffclt26} equivalently, for $n\to\infty$, almost surely,
\begin{align}\label{eq:1-exp}
    {\Theta_n} - {\theta}_0 = \frac{Z}{n^{\psi'(\theta_0)}} + o\left(n^{-\psi'(\theta_0)}\right).
\end{align}
The exact rate of convergence in \eqref{eq:1-exp} is also known in the case $1/4 < \psi'(\theta_0) < 1/2$. 

\begin{theorem} \label{thm:weak0}
Let Assumptions~\ref{assum:n1} - \ref{assum:n3} hold. If further
$1/4 < \psi'(\theta_0) < 1/2$, then
\begin{equation} \label{eq:weak}
    \sqrt{n}\left(\Theta_n - \theta_0 - n^{-\psi'(\theta_0)}Z\right) \stackrel{d}{\to} N\left(0, \frac{s^2}{1-2\psi'(\theta_0)}\right),
\end{equation}
and, almost surely,
\begin{align}\label{eq:limsups2}
 &\limsup_{n\to \infty} \sqrt{\frac{n}{2\log\log n}}\left(\Theta_n - \theta_0 - \frac{Z}{n^{\psi'(\theta_0)}}\right) \nonumber %
\\ =-& \liminf_{n\to \infty} \sqrt{\frac{n}{2\log\log n}}\left(\Theta_n - \theta_0 - \frac{Z}{n^{\psi'(\theta_0)}}\right)=  \sqrt{\frac{s^2}{1-2 \psi'(\theta_0)}},%
\end{align}
 where $s^2 > 0$ is given by~\eqref{def:Sigma}.
\end{theorem}
\begin{proof}
{Proceeding as in the proof of Theorem~\ref{thm:sa1}, we get~\eqref{eq:weak} (respectively, \eqref{eq:limsups2}) from Theorem 3 of \cite{major1973limit} (respectively, Theorem 2 of \cite{krasulina1996estimate}). In Theorem 2 of \cite{krasulina1996estimate}, only the limit superior in~\eqref{eq:limsups2} is proved using the limit superior of certain scaled martingale sequence, given in Lemma 2 of \cite{krasulina1996estimate}. However, using the limit inferior of the corresponding martingale sequence (see Theorem 1 of \cite{heyde}), we get the limit inferior in \eqref{eq:limsups2}}.
\end{proof}

Note that we have the exact rate of convergence of $\Theta_n - \theta_0$ to $0$ for $\psi'(\theta_0) \ge 1/2$. For $\psi'(\theta_0)<1/2$, we can interpret~\eqref{eq:1-exp} as an expansion of order one for $\Theta_n - \theta_0$, while further refinement of the error term is given by Theorem~\ref{thm:weak0} for $1/4 < \psi'(\theta_0) < 1/2$. If we assume higher order differentiability of $\psi$, we can get analogous higher order expansion for $\Theta_n - \theta_0$. Such results are given by Theorem $(3k+1)$, Theorem $(3k+2)$ and Theorem $(3k+3)$ of \cite{major1973limit} as well as Theorem $(2k+1)$ and Theorem $(2k+2)$ of \cite{krasulina1996estimate}, where $k+1$ ($k \geq 0$) is the order of differentiability of $\psi$. However, \cite{krasulina1996estimate} and \cite{major1973limit} prove these theorems only for the cases $k = 0$ and $k = 1$.
To the best of our knowledge, a complete proof of these results for all $k > 1$ is missing in the literature. For the sake of completeness, we present an appropriately rephrased version of these results along with their proofs for all $k$. We begin with some notations. 
Let %
$\{b_t\}_{t\geq 1}$ be recursively given by (assuming appropriate differentiability of $\psi$)
\begin{equation} \label{eq: def b}
b_1 = 1, \quad b_t = \frac{1}{\psi'(\theta_0)(t-1)}\sum_{i=2}^{t}\frac{\psi^{(i)}(\theta_0)}{i!}\sum_{(c_1, \ldots, c_i) \in \mathcal{P}_{i,t}}\nu_{(c_1, \ldots, c_i)}b_{c_1}\ldots b_{c_i}, \quad t \geq 2,
\end{equation}
where for $1 \leq i \leq t$, $\mathcal{P}_{i,t}$ and $\nu_{(c_1, \ldots, c_i)}$ are given by \eqref{eq:combn}.
We also need the following lemmas.
\begin{lemma} Let Assumptions~\ref{assum:n1} - \ref{assum:n3} hold with $\psi'(\theta_0) < 1/2$ and $Z$ be as given in \eqref{eq:sdiffclt26}. Then, we have, as $n\to\infty$,
    \begin{align}\label{eq:supp1}
    n^{\psi'(\theta_0)}\left(\Theta_n - \theta_0\right) - b_1 Z &= A_n + B_n + O\left(n^{-1}\right),
\end{align}
where the sequences $(A_n)_{n \geq 1}$ and $(B_n)_{n \geq 1}$ are given by,
\begin{align}
    A_n &= \sum_{j=n}^{\infty}\frac{1}{{(j+1)}^{1-\psi'(\theta_0)}}\left(1+O\left(\frac{1}{j}\right)\right)R(\Theta_j - \theta_0),  \label{eq:suppno1} \\ B_n &= \sum_{j=n}^{\infty}\frac{1}{{(j+1)}^{1-\psi'(\theta_0)}}\left(1+O\left(\frac{1}{j}\right)\right)\epsilon_{j+1},\label{eq:suppn2}
\end{align}
with {$R(\theta) := \psi(\theta_0 +\theta) - \psi'(\theta_0)\theta$} and the bounds in~\eqref{eq:suppno1} and~\eqref{eq:suppn2} are uniform in $j$.
\end{lemma}
\begin{proof}
From~\eqref{eq:sa-main},we have for all $n,m \geq 1$,
\begin{align*}%
    &\left({\Theta}_{n+m+1} - \theta_0\right) \\ &= \left({\Theta_{n+m}} - \theta_0\right)\left(1 - \frac{\psi'(\theta_0)}{n+m+1}\right) - \frac{1}{n+m+1}\bigg( R(\Theta_{n+m}- \theta_0) + {\epsilon}_{n+m+1}\bigg)\\ 
    &= \left({\Theta_n} - \theta_0\right)\prod_{j=n}^{n+m}\left(1-\frac{\psi'(\theta_0)}{j+1}\right) \\ & \hspace{3cm}- \sum_{i=0}^m\frac{1}{n+1+i}\bigg(R(\Theta_{n+i}- \theta_0) + {\epsilon}_{n+i+1}\bigg)\prod_{j=n+i+1}^{n+m}\left(1-\frac{\psi'(\theta_0)}{j+1}\right).
\end{align*}
Using the fact that 
$\sum_{j=1}^{n}(1/j) = \gamma + \log n + O\left(1/n\right)$ as $n\to\infty$,
we have, for $-1 \leq i \leq m$,
\begin{align*}
    \log \prod_{j=n+i+1}^{n+m}\left(1-\frac{\psi'(\theta_0)}{j+1}\right) =
    & \sum_{j=n+i}^{n+m} \log \left(1-\frac{\psi'(\theta_0)}{j+1}\right)\\
    = & - \psi'(\theta_0) \sum_{j=n+i}^{n+m} \frac1{j+1} + 
    \sum_{j=n+i}^{n+m} O\left( \frac{1}{(j+1)^2}\right)\\
    = & - \psi'(\theta_0) \log \left(\frac{n+m+1}{n+i+1}\right) + O\left( \frac1{n+i}\right),
\end{align*}
giving
\[
   \prod_{j=n+i+1}^{n+m}\left(1-\frac{\psi'(\theta_0)}{j+1}\right) = \left(\frac{n+i+1}{n+m+1}\right)^{\psi'(\theta_0)}\left(1+O\left(\frac{1}{n+i}\right)\right),
\]
where the bound is uniform in $i$ and $m$. Hence
\begin{align*}%
    &{n}^{\psi'(\theta_0)}\left({\Theta_n} - \theta_0\right)\left(1+O\left(\frac{1}{n}\right)\right) - (n+m+1)^{\psi'(\theta_0)}\left({\Theta}_{n+m+1} - \theta_0\right) \\ =&
    \sum_{i=0}^m\left(\frac{1}{n+i+1}\right)^{1-\psi'(\theta_0)}\left(1+O\left(\frac{1}{n+i}\right)\right)\bigg(R(\Theta_{n+i}- \theta_0) + {\epsilon}_{n+i+1}\bigg) \end{align*}
{Letting $m \to \infty$} and using \eqref{eq:1-exp}, we have the required result.
\end{proof}

The following lemma immediately follows from Martingale Central Limit Theorem and Martingale Law of Iterated Logarithms (see the proofs of Theorem 3 of \cite{major1973limit} and Theorem 2 of \cite{krasulina1996estimate}, respectively). 
\begin{lemma} Assume that $\left(B_n\right)_{n\geq 1}$ is given by \eqref{eq:suppn2} and Assumptions~\ref{assum:n1} and \ref{assum:n2} hold with $\psi'(\theta_0) < 1/2$. Then, with $s^2>0$ given by~\eqref{def:Sigma}, we have
\begin{align}\label{eq:suppnn1}
   n^{1/2 - \psi'(\theta_0)}B_n \stackrel{d}{\to} N\left(0, \frac{s^2}{1-2\psi'(\theta_0)}\right),
\end{align}
and almost surely,
\begin{equation}\label{eq:suppnn2}
   \limsup_{n\to\infty}\frac{n^{1/2 - \psi'(\theta_0)}B_n}{\sqrt{2\log\log n}} \nonumber%
   =-\liminf_{n\to\infty}\frac{n^{1/2 - \psi'(\theta_0)}B_n}{\sqrt{2\log\log n}} = \sqrt{\frac{s^2}{1-2\psi'(\theta_0)}}.
\end{equation}
Consequently, if $k$ is such that $0 < \psi'(\theta_0) < \frac{1}{2(k+1)}$, then almost surely
\begin{align}\label{eq:supp7n}
   n^{k\psi'(\theta_0)}B_n = o(1).
\end{align}
\end{lemma}

We are now ready to consider $k$-th order expansion of $\Theta_n-\theta_0$ analogous to~\eqref{eq:1-exp}. We first introduce some notations. Consider  $\psi$ to be $k$-times differentiable. As $\psi(\theta_0) = 0$, we write 
\begin{align}\label{eq:supp0}
   R(\theta) &= C_{k}(\theta) + {U_{k}(\theta)}, \quad \text{with} \quad {U_{k}(\theta)} = O(\theta^{k+1}) \quad \text{as $\theta\to 0$ and} \nonumber\\
   C_{k}(\theta) &= \frac{\psi^{(2)}(\theta_0)}{2!}\theta^2 + \ldots + \frac{\psi^{(k)}(\theta_0)}{k!}\theta^{k}.
\end{align}

\begin{theorem}\label{thm:A1}
Let Assumptions~\ref{assum:n1} and~\ref{assum:n2} hold and $\psi$ be $k$-times differentiable for some $k \geq 1$. For $k=1$, we additionally assume Assumption~\ref{assum:n3}. 

Then, for $k>1$, when $0 < \psi'(\theta_0) < \frac{1}{2(k-1)}$ holds, we almost surely have, as $n\to\infty$,
\begin{align}
   n^{(k-1)\psi'(\theta_0)}A_n &= \sum_{t=2}^{k}n^{(k-t) \psi'(\theta_0)}b_t Z^{t} + o(1). \label{eq:supp6} \intertext{For $k\ge 1$, when further $0 < \psi'(\theta_0) < \frac{1}{2k}$ holds, we almost surely have, as $n\to\infty$,}
    \Theta_n - \theta_0 &= \sum_{j = 1}^{k} b_j \left(\frac{Z}{n^{\psi'(\theta_0)}}\right)^j + o\left(n^{-{k}\psi'(\theta_0)}\right), \label{eq:nn1}
\end{align}
where the coefficients $b_n$'s are given by~\eqref{eq: def b}, together with~\eqref{eq:combn}.
\end{theorem}
\begin{proof}
   We shall use induction on $k$. For the base case $k=1$, we need to show only~\eqref{eq:supp6}, which follows from \eqref{eq:1-exp} (Theorem 2 of \cite{major1973limit}). Next we shall prove both the induction statements~\eqref{eq:supp6} and~\eqref{eq:nn1} for $(k+1)$ with $(k+1)$-times differentiable $\psi$ with $k\geq 1$.
   
   We first establish~\eqref{eq:supp6} for $(k+1)$. We assume $0 < \psi'(\theta_0) < \frac{1}{2k}$ and~\eqref{eq:nn1} holds almost surely by the induction hypothesis. Note that
\begin{align}\label{eq:supp2}
    n^{k\psi'(\theta_0)}A_n &=  n^{k\psi'(\theta_0)}\sum_{j=n}^{\infty}\frac{C_{k+1}(\Theta_j - \theta_0)}{{(j+1)}^{1-\psi'(\theta_0)}} \nonumber \\ 
    & \qquad + O\left(n^{k\psi'(\theta_0)}\sum_{j=n}^{\infty}\frac{(\Theta_j - \theta_0)^{k+2}}{{j}^{1-\psi'(\theta_0)}}\right) + O\left(n^{k\psi'(\theta_0)}\sum_{j=n}^{\infty}\frac{(\Theta_j - \theta_0)^{2}}{{j}^{2-\psi'(\theta_0)}}\right).
\end{align}
Now, using \eqref{eq:nn1}, we get
\begin{multline}\label{eq:supp3}
n^{k\psi'(\theta_0)}\sum_{l=n}^{\infty}\frac{(\Theta_l - \theta_0)^{k+2}}{{l}^{1-\psi'(\theta_0)}} = n^{k\psi'(\theta_0)}\sum_{l=n}^{\infty}\frac{\left(\sum_{j = 1}^k b_j %
    Z^j l^{-j\psi'(\theta_0)} + o(l^{-k\psi'(\theta_0)})\right)^{k+2}}{{l}^{1-\psi'(\theta_0)}}
    \\ = O\left(n^{k\psi'(\theta_0)}\sum_{l=n}^{\infty}\frac{1}{{l}^{1 + (k+1)\psi'(\theta_0)}}\right)
    = O\left(\frac{1}{n^{\psi'(\theta_0)}}\right).
\end{multline}
Further, since $0<k \psi'(\theta_0) < 1/2 < 1$, we get
\begin{multline}\label{eq:supp4}
n^{k\psi'(\theta_0)}\sum_{l=n}^{\infty}\frac{(\Theta_l - \theta_0)^{2}}{{l}^{2-\psi'(\theta_0)}} = n^{k\psi'(\theta_0)}\sum_{l=n}^{\infty}\frac{\left(\sum_{j = 1}^k b_j 
Z^j l^{-j \psi'(\theta_0)} + o(l^{-k\psi'(\theta_0)})\right)^{2}}{{l}^{2-\psi'(\theta_0)}}
    \\ = O\left(n^{k\psi'(\theta_0)}\sum_{l=n}^{\infty}\frac{1}{{l}^{2+\psi'(\theta_0)}}\right)
    = O\left(\frac{1}{n^{\psi'(\theta_0)+ 1-k\psi'(\theta_0)}}\right) %
    = O\left(\frac{1}{n^{\psi'(\theta_0)}}\right).
\end{multline}
Finally, we have
\begin{align}
&n^{k\psi'(\theta_0)}\sum_{l=n}^{\infty}\frac{C_{k+1}(\Theta_l - \theta_0)}{{(l+1)}^{1-\psi'(\theta_0)}} = \sum_{i=2}^{k+1}\frac{\psi^{(i)}(\theta_0)}{i!}n^{k\psi'(\theta_0)}\sum_{l=n}^{\infty}\frac{(\Theta_l - \theta_0)^i}{{(l+1)}^{1-\psi'(\theta_0)}} \nonumber\\
    =& \sum_{i=2}^{k+1}\frac{\psi^{(i)}(\theta_0)}{i!}n^{k\psi'(\theta_0)}\sum_{l=n}^{\infty}\frac{\left(\sum_{j = 1}^k b_j Z^j l^{-j \psi'(\theta_0)}+ o(l^{-k\psi'(\theta_0)})\right)^i}{{l}^{1-\psi'(\theta_0)}} \left( 1+ O\left(\frac1l\right) \right) \label{eq:int1}\\
    =& \sum_{i=2}^{k+1}\frac{\psi^{(i)}(\theta_0)}{i!} \left[\sum_{t=i}^{k+1}\sum_{(c_1, \ldots, c_i) \in \mathcal{P}_{i,t}}\nu_{(c_1, \ldots, c_i)} b_{c_1}\ldots b_{c_i} Z^{t}n^{k\psi'(\theta_0)}
    \sum_{l=n}^{\infty}\frac{1}{{l}^{1 + (t-1) \psi'(\theta_0)}} %
    \right. \nonumber \\  & \hspace{1cm} + O\left( n^{k\psi'(\theta_0)} \sum_{l=n}^\infty \frac1{l^{2+\psi'(\theta_0}} \right) + \left. o\left(n^{k\psi'(\theta_0)}\sum_{l=n}^{\infty}\frac{1}{{l}^{1 + k \psi'(\theta_0)}}\right)\right] \nonumber 
    \\ =& \sum_{t=2}^{k+1}n^{k\psi'(\theta_0)} \sum_{l=n}^{\infty}\frac{1}{{l}^{1+(t-1)\psi'(\theta_0)}}%
    \psi'(\theta_0) (t-1) b_t Z^{t} + O\left(\frac{1}{n^{1-(k-1)\psi'(\theta_0)}}\right) + o\left(1\right) \nonumber \\ 
    =& \sum_{t=2}^{k+1} n^{(k+1-t)\psi'(\theta_0)}
    b_t Z^{t}
    + 
    O\left(\frac{1}{n^{1-(k-1)\psi'(\theta_0)}}\right) 
    + o\left(1\right) \label{eq:int2} \\ 
    =& \sum_{t=2}^{k+1}n^{(k+1-t)\psi'(\theta_0)}b_t Z^{t} +  o\left(1\right),  \label{eq:supp5}
\end{align}
where we use~\eqref{eq:nn1} in~\eqref{eq:int1}, the fact 
\begin{align} \label{eq:karamata error}
\sum_{l=n}^\infty l^{-(1+w)} = \frac1w n^{-w} + O\left(n^{-(1+w)}\right) \quad \text{for $w>0$}
\end{align}
in~\eqref{eq:int2} and the fact
$0\le (k-1) \psi'(\theta_0) < k \psi'(\theta_0) < 1/2 < 1$ for $k\ge 1$ in~\eqref{eq:supp5}. Combining~\eqref{eq:supp2},~\eqref{eq:supp3},~\eqref{eq:supp4} and~\eqref{eq:supp5} we get the induction statement~\eqref{eq:supp6} for $(k+1)$.

As $\psi$ is differentiable,~\eqref{eq:supp1} holds almost surely. We now further assume $0 < \psi'(\theta_0) < \frac1{2(k+1)}$,~\eqref{eq:supp7n} holds almost surely. Thus, using~\eqref{eq:supp1},~\eqref{eq:supp7n} and~\eqref{eq:supp6} and $\psi'(\theta_0) < {1}/{k}$, we also obtain the induction statement~\eqref{eq:nn1} for $(k+1)$.
\end{proof}

\begin{remark} \label{rem:2diff}
    In Theorem~\ref{thm:A1}, for $k\ge 2$, the function $\psi$ is at least twice differentiable and thus, Assumption~\ref{assum:n3} holds.
\end{remark}

The following result extends Theorem~\ref{thm:weak0}, which holds for $1/4 < \psi'(\theta_0) < 1/2$, and gives the exact rate of convergence in~\eqref{eq:nn1}.

\begin{theorem}\label{thm:A2}
Let Assumptions~\ref{assum:n1} and~\ref{assum:n2} hold and $\psi$ be $k$-times differentiable for some $k \geq 2$. %
If further $\frac{1}{2(k+1)} < \psi'(\theta_0) \leq \frac{1}{2k}$ holds, 
then
\begin{align}\label{eq:suppwc}
& \sqrt{n}\left(\Theta_n - \theta_0 - \sum_{j = 1}^{k}b_j \left(\frac{Z}{n^{\psi'(\theta_0)}}\right)^j\right) %
\stackrel{d}{\to} {N}\left(0, \frac{s^2}{1-2 \psi'(\theta_0)}\right),
\end{align}
and, almost surely
\begin{align}\label{eq:supplil}
 &\limsup_{n\to \infty} \sqrt{\frac{n}{2\log\log n}}\left(\Theta_n - \theta_0 - \sum_{j = 1}^{k} b_j \left(\frac{Z}{n^{\psi'(\theta_0)}}\right)^j\right) \nonumber %
\\ =-& \liminf_{n\to \infty} \sqrt{\frac{n}{2\log\log n}}\left(\Theta_n - \theta_0 - \sum_{j = 1}^{k} b_j \left(\frac{Z}{n^{\psi'(\theta_0)}}\right)^j\right) = \sqrt{\frac{s^2}{1-2 \psi'(\theta_0)}}.%
\end{align}
Here the coefficients $b_n$'s are given by~\eqref{eq: def b}, together with~\eqref{eq:combn}, while the limiting variance $s^2>0$ is given by~\eqref{def:Sigma}.
\end{theorem}
\begin{proof}

We first consider $\frac{1}{2(k+1)} < \psi'(\theta_0) < \frac{1}{2k}$ and thus~\eqref{eq:nn1} holds. However, we need a sharper estimate of the error term. Since $k\ge 2$, we rewrite~\eqref{eq:nn1} as
\begin{align}\label{eq:nn1 O}
    \Theta_n - \theta_0 = \sum_{j = 1}^{k} b_j \left(\frac{Z}{n^{\psi'(\theta_0)}}\right)^j + o\left(n^{-{k}\psi'(\theta_0)}\right) = \sum_{j = 1}^{k-1} b_j \left(\frac{Z}{n^{\psi'(\theta_0)}}\right)^j + O\left(n^{-{k}\psi'(\theta_0)}\right).
\end{align}
Since $\psi$ is $k$-times differentiable, arguing as in~\eqref{eq:supp3}, we get
\begin{multline} \label{eq:supp2 new}
    n^{(k-1) \psi'(\theta_0)}A_n =  n^{(k-1) \psi'(\theta_0)} \sum_{j=n}^{\infty} \frac{C_{k}(\Theta_j - \theta_0)}{{(j+1)}^{1-\psi'(\theta_0)}} \\ 
    + O\left(n^{(k-1) \psi'(\theta_0)} \sum_{j=n}^{\infty}\frac{(\Theta_j - \theta_0)^{k+1}}{{j}^{1-\psi'(\theta_0)}}\right) + O\left(n^{(k-1) \psi'(\theta_0)} \sum_{j=n}^{\infty}\frac{(\Theta_j - \theta_0)^{2}}{{j}^{2-\psi'(\theta_0)}}\right).
\end{multline}

Again arguing as in~\eqref{eq:supp5}, but now using~\eqref{eq:nn1 O}, and further using~\eqref{eq:karamata error}, we get
\begin{align}\label{eq:suppn5}
&n^{(k-1) \psi'(\theta_0)} \sum_{l=n}^{\infty} \frac{C_{k}(\Theta_l - \theta_0)}{{(l+1)}^{1-\psi'(\theta_0)}} %
\nonumber\\
= &\sum_{i=2}^{k}\frac{\psi^{(i)}(\theta_0)}{i!}n^{(k-1) \psi'(\theta_0)} \sum_{l=n}^{\infty}\frac{\left(\sum_{j = 1}^{k-1} b_j {Z}^j l^{-j \psi'(\theta_0)} + O(l^{-k\psi'(\theta_0)})\right)^i}{{(l+1)}^{1-\psi'(\theta_0)}} \nonumber
    \\  =& \sum_{i=2}^{k}\frac{\psi^{(i)}(\theta_0)}{i!}\left[\sum_{t=i}^{k}\sum_{(c_1, \ldots, c_i) \in \mathcal{P}_{i,t}}\nu_{(c_1, \ldots, c_i)}b_{c_1}\ldots b_{c_i}Z^{t}n^{(k-1)\psi'(\theta_0)}\sum_{l=n}^{\infty}\frac{{l}^{-t\psi'(\theta_0)}}{{(l+1)}^{1-\psi'(\theta_0)}}%
    \right. \nonumber \\  & \hspace{1cm} + \left. O\left(n^{(k-1)\psi'(\theta_0)}\sum_{l=n}^{\infty}\frac{{l}^{-(k+1)\psi'(\theta_0)}}{{(l+1)}^{1-\psi'(\theta_0)}}\right)\right] \nonumber \\
    =& \sum_{t=2}^{k}n^{(k-t) \psi'(\theta_0)} b_t Z^{t} +
    O\left(\frac{1}{n^{1-(k-2)\psi'(\theta_0)}}\right) 
    + O\left(\frac{1}{n^{\psi'(\theta_0)}}\right) \nonumber %
    \\ =& \sum_{t=2}^{k}n^{(k-t)\psi'(\theta_0)}b_t Z^{t} +  O\left(\frac{1}{n^{\psi'(\theta_0)}}\right) %
\end{align}
where in the last step we use $\psi'(\theta_0) < \frac1{2k} < \frac1{k-1}$.
Combining~\eqref{eq:supp2 new},~\eqref{eq:suppn5},~\eqref{eq:supp3}, and~\eqref{eq:supp4}, for $\frac{1}{2(k+1)} < \psi'(\theta_0) < \frac{1}{2k}$ we get,
\begin{align}\label{eq:supp6-1}
   n^{(k-1)\psi'(\theta_0)}A_n = \sum_{t=2}^{k}n^{(k-t)\psi'(\theta_0)}b_t Z^{t} + O\left({n^{-\psi'(\theta_0)}}\right).
\end{align}
Thus, for $\frac{1}{2(k+1)} < \psi'(\theta_0) < \frac{1}{2k}$, we obtain~\eqref{eq:suppwc} (respectively,~\eqref{eq:supplil}) from~\eqref{eq:supp1},~\eqref{eq:supp6-1} 
and~\eqref{eq:suppnn1} (respectively~\eqref{eq:suppnn2}).

Finally, in case $\psi'(\theta_0) = \frac{1}{2k} < \frac{1}{2(k-1)}$, using \eqref{eq:supp1}, \eqref{eq:supp6} and the fact $1/2 - k\psi'(\theta_0) = 0$, we get, almost surely,
\[
\sqrt{n} \left( \Theta_n - \theta_0 - \sum_{j=1}^k b_j \left( \frac{Z}{n^{\psi'(\theta_0}} \right)^t \right) = o(1) + n^{\frac12 - \psi'(\theta_0)} B_n + O \left( \frac1{n^{1+\psi'(\theta_0)}} \right).
\]
Further, using \eqref{eq:suppnn1} (respectively, \eqref{eq:suppnn2}) we get \eqref{eq:suppwc} (respectively, \eqref{eq:supplil}). 
\end{proof}

\begin{remark}
    As noted in Remark~\ref{rem:2diff}, the twice differentiability of the function $\psi$ also in Theorem~\ref{thm:A2} guarantees Assumption~\ref{assum:n3} and it is not assumed separately.
\end{remark}

\begin{remark}
    As also noted in Remark~\ref{rem:0}, if the function $\psi$ is differentiable of order $k= \left\lfloor \frac1{2\psi'(\theta_0)} \right\rfloor$, then we get the exact order in the expansion of $\Theta_n - \theta_0$ given in Theorem~\ref{thm:A2}. However, if the function $\psi$ is not smooth enough, we shall only have Theorem~\ref{thm:A1}.
\end{remark}

\subsection{Auxiliary Random Walk and Stochastic Approximation}\label{sec:proofs-lemmas}

The auxiliary random walk that we use in our model, namely $\boldsymbol{\widetilde{S}}_n$, after appropriate scaling, is a stochastic approximation process. For $n\geq 1$, as $\boldsymbol{\widetilde{S}}_{n+1} = \boldsymbol{\widetilde{S}}_n + \boldsymbol{\widetilde{X}}_{n+1}$, we have
\begin{align}\label{pf-eq2}
    \frac{\boldsymbol{\widetilde{S}}_{n+1}}{n+1} &= \frac{\boldsymbol{\widetilde{S}}_{n}}{n} + \frac{1}{n+1}\left(\boldsymbol{\widetilde{X}}_{n+1} - \frac{\boldsymbol{\widetilde{S}}_{n}}{n}\right) \nonumber \\ &
    = \frac{\boldsymbol{\widetilde{S}}_{n}}{n} + \frac{1}{n+1}\left(\boldsymbol{H}\left(\frac{\boldsymbol{\widetilde{S}}_{n}}{n}\right) - \frac{\boldsymbol{\widetilde{S}}_{n}}{n} + \boldsymbol{\widetilde{X}}_{n+1} - \boldsymbol{H}\left(\frac{\boldsymbol{\widetilde{S}}_{n}}{n}\right)\right),
\end{align}
where $\boldsymbol{H}$ is given by \eqref{eq:H}. With $D_s$ as in Section~\ref{subsec:model-gerw}, define, $\boldsymbol{\gamma}: D_s \mapsto \mathbb{R}^s$ by $\boldsymbol{\gamma}(\boldsymbol{x}) := \boldsymbol{x} - \boldsymbol{H}(\boldsymbol{x})$ and for $n \geq 1$, let $\boldsymbol{\Gamma}_{n} = {\boldsymbol{\widetilde{S}}_n}/{n}$, $\mathcal{G}_{n} = \sigma\{\boldsymbol{\widetilde{X}}_1, \ldots, \boldsymbol{\widetilde{X}}_n\}$, $\alpha_n = 1/(n+1)$ and $\boldsymbol{e}_{n+1} = \boldsymbol{H}\left({\boldsymbol{\widetilde{S}}_{n}}/{n}\right) - \boldsymbol{\widetilde{X}}_{n+1}$. From \eqref{pf-eq2}, we get for $n \geq 1$,
\begin{align}\label{eq-sa2}
   \boldsymbol{\Gamma}_{n+1} = \boldsymbol{\Gamma}_{n} - \alpha_n\left(\boldsymbol{\gamma}(\boldsymbol{\Gamma}_{n}) + \boldsymbol{e}_{n+1}\right).
\end{align}

Thus the process $\left(\boldsymbol{\Gamma}_{n}, \mathcal{G}_{n}\right)$ is a stochastic approximation process on ${D}_s$ with drift $\boldsymbol{\gamma}$, step size $\left\{\alpha_n\right\}_{n\geq 1}$ and random noise $\left\{\boldsymbol{e}_{n}\right\}_{n\geq 2}$. Observe that, $\{\alpha_n\}_{n\geq 1}$ satisfies 
\begin{align*}%
\sum_{n=1}^{\infty} \alpha_n = \infty, \quad \sum_{n=1}^{\infty} \alpha^2_n < \infty.
\end{align*}
Some desired properties of $\left\{\boldsymbol{e}_{n}\right\}_{n\geq 2}$ are described in Lemma~\ref{lem-noiseanddrift2}%
, which are required to prove our main results. %
Its proof heavily uses the notations defined in Section~\ref{sec:gerw} (in particular, Section~\ref{subsec:model-gerw} and Section~\ref{sec:main-results}) and Section~\ref{sec:sa-ldrw-gerw}. 

\begin{lemma}[]\label{lem-noiseanddrift2}
{Assume that $\mathbb{E}\left(\|\boldsymbol{Y}_1\|^2\right) < \infty$. Then the random noise $\left\{\boldsymbol{e}_{n}\right\}_{n\geq 2}$ of the %
process $\left(\boldsymbol{\Gamma}_{n}, \mathcal{G}_{n}\right)$, given by~\eqref{eq-sa2}, satisfies the following properties. 
\begin{enumerate}
\item{}\label{item2} The conditional distribution of $\boldsymbol{\widetilde{X}}_{n+1}$ given $\mathcal{G}_{n}$ is a measurable function of $\boldsymbol{\Gamma}_{n}$ alone and so the same holds for $\boldsymbol{e}_{n+1}$ as well. More precisely, for any bounded measurable function $g : D_s \to \mathbb{R}$, 
\begin{align}\label{eq:g}
   \mathbb{E}\left(g\left(\boldsymbol{\widetilde{X}}_{n+1}\right) \mid \mathcal{G}_{n}\right) = \sum_{i=1}^{r}\mathcal{P}_i\left(\boldsymbol{\Gamma}_{n}\right)\mathbb{E}\left(g\left(\boldsymbol{Y}^{(\pi_i)}\right)\right).
\end{align}
Moreover, $\left(\boldsymbol{e}_{n}\right)_{n \geq 2}$ is an $L^2$ martingale-difference sequence with respect to the filtration $\left(\mathcal{G}_{n}\right)_{n \geq 2}$. 

Define the functions $\sigma^2$ and $\boldsymbol{\Sigma}$ on ${D}_s$ as: \begin{align}\label{eq:lem-Sigma}
\sigma^2\left(\boldsymbol{x}\right) := \mathbb{E}\left(\|\boldsymbol{e}_{n+1}\|^2 \mid \boldsymbol{\Gamma}_{n} = \boldsymbol{x}\right), \quad \boldsymbol{\Sigma}\left(\boldsymbol{x}\right) := \mathbb{E}\left(\boldsymbol{e}_{n+1}\boldsymbol{e}^{\top}_{n+1} \mid \boldsymbol{\Gamma}_{n} = \boldsymbol{x}\right).
\end{align}
Then we have
\begin{align*}%
\boldsymbol{\Sigma}\left(\boldsymbol{x}\right) &= \sum_{i=1}^{r}\mathcal{P}_i\left(\boldsymbol{x}\right)\boldsymbol{\Sigma}^{(\pi_i)}
  -  \boldsymbol{H}\left(\boldsymbol{x}\right)\left(\boldsymbol{H}\left(\boldsymbol{x}\right)\right)^{\top},
\intertext{and}
\sigma^2\left(\boldsymbol{x}\right) &= \sum_{i=1}^{r}\mathcal{P}_i\left(\boldsymbol{x}\right)\operatorname{tr}\boldsymbol{\Sigma}^{(\pi_i)} - \left(\boldsymbol{H}\left(\boldsymbol{x}\right)\right)^{\top}\boldsymbol{H}\left(\boldsymbol{x}\right).
\end{align*}
For some constant $C > 0$ and for all $\boldsymbol{x} \in {D}_s$, we also have \begin{align*}%
\mathbb{E}\left(\|\boldsymbol{\gamma}(\boldsymbol{\Gamma}_n) + \boldsymbol{e}_{n+1}\|^2 \mid \boldsymbol{\Gamma}_n = \boldsymbol{x}\right) \leq \|\boldsymbol{\gamma}(\boldsymbol{x})\|^2 + \sigma^2(\boldsymbol{x}) \leq C(1 + \|\boldsymbol{x}\|^2).
\end{align*}

Additionally, if $\boldsymbol{\Gamma}_n \stackrel{a.s.}{\to} \boldsymbol{x}_0$ for some $\boldsymbol{x}_0 \in {D}_s$ and $\boldsymbol{\mathcal{P}}$ is continuous at $\boldsymbol{x}_0$, then
\begin{align}\label{eq:lem-Sigmaconv}
\sigma^2\left(\boldsymbol{\Gamma}_{n}\right) \stackrel{a.s.}{\to} \sigma^2\left(\boldsymbol{x}_0\right), \quad \boldsymbol{\Sigma}\left(\boldsymbol{\Gamma}_{n}\right) \stackrel{a.s.}{\to} \boldsymbol{\Sigma}\left(\boldsymbol{x}_0\right).
\end{align}
\item{}\label{item3} The following Lindeberg condition is satisfied: for all $\delta > 0$,
    \begin{align}\label{eq:lind}
    \frac{1}{n}\sum_{k=1}^{n}\mathbb{E}\left( \|\boldsymbol{e}_{k+1}\|^2 \mathbf{1}\left\{\|\boldsymbol{e}_{k+1}\| \geq \delta\sqrt{n}\right\}\mid \mathcal{G}_{k}\right) \stackrel{a.s.}{\to} 0.
    \end{align}
If we additionally have $\mathbb{E}\left(\|\boldsymbol{Y}_1\|^{2+\epsilon}\right) < \infty$ for some $\epsilon > 0$, then
\begin{align}\label{eq:sup}
        \sup_{n \geq 1} \mathbb{E}\left( \|\boldsymbol{e}_{n+1}\|^{2+\epsilon} \mid \mathcal{G}_{n}\right) < \infty.
\end{align}
\end{enumerate}}
\end{lemma}

\begin{proof} 
{%
The assertion that the conditional distribution of $\boldsymbol{\widetilde{X}}_{n+1}$ given $\mathcal{G}_{n}$ is a measurable function of $\boldsymbol{\Gamma}_{n}$ alone and \eqref{eq:g} follow from \eqref{eq12}. Next observe that $\|\boldsymbol{\widetilde{X}}_{n+1}\| \leq \|\boldsymbol{Y}_{n+1}\|$ and for any $\boldsymbol{x} \in {D}_s$,
\begin{align}\label{eq:h-ineq}
    \|\boldsymbol{H}(\boldsymbol{x})\| &= \left\|\sum_{i=1}^{r}\mathcal{P}_i(\boldsymbol{x})\boldsymbol{\mu}^{(\pi_i)}\right\| \leq \|\boldsymbol{\mu}\| \sum_{i=1}^{r} \mathcal{P}_i(\boldsymbol{x}) = \|\boldsymbol{\mu}\|. 
\end{align}
Thus, for any $n \geq 1$,
\begin{align}\label{eq:e-bound}
    \|\boldsymbol{e}_{n+1}\| &\leq \left\|\boldsymbol{H}\left({\boldsymbol{\Gamma}_n}\right)\right\| + \|\boldsymbol{\widetilde{X}}_{n+1}\| \leq \|\boldsymbol{\mu}\| + \|\boldsymbol{Y}_{n+1}\|,
\end{align}
and so
\[\mathbb{E}\left(\|\boldsymbol{e}_{n+1}\|\right) \leq \|\boldsymbol{\mu}\| +  \mathbb{E}\left(\|\boldsymbol{Y}_1\|\right)< \infty.\]}

Also, from 
\eqref{eq12}, it follows that 
\begin{align*}
    \mathbb{E}\left(\boldsymbol{e}_{n+1} \mid \mathcal{G}_{n}\right) = \mathbb{E}\left(\boldsymbol{H}({\boldsymbol{\Gamma}_n}) - \boldsymbol{\widetilde{X}}_{n+1} \Big| \boldsymbol{\widetilde{S}}_{n}\right) = \boldsymbol{H}({\boldsymbol{\Gamma}_n}) -  \sum_{i=1}^{r}\mathcal{P}_i(\boldsymbol{x})\boldsymbol{\mu}^{(\pi_i)} 
    = 0.
\end{align*}
Thus 
$\left(\boldsymbol{e}_{n}\right)_{n \geq 2}$ is a martingale-difference sequence with respect to $\left(\mathcal{G}_{n}\right)_{n \geq 2}$.
Using~\eqref{eq:h-ineq}, 
for any $n \geq 1$, we get
\begin{align}\label{eq:e-bound2}
    \|\boldsymbol{e}_{n+1}\|^2 &\leq 2\left(\left\|\boldsymbol{H}\left({\boldsymbol{\Gamma}_n}\right)\right\|^2 + \|\boldsymbol{\widetilde{X}}_{n+1}\|^2\right)
   \leq 2\left(\|\boldsymbol{\mu}\|^2 + \|\boldsymbol{Y}_{n+1}\|^2\right),
\end{align}
which makes the martingale difference $L^2$-bounded.

{From \eqref{eq12}, we further have
\begin{align*}
\boldsymbol{\Sigma}\left(\boldsymbol{x}\right) = \mathbb{E}\left(\boldsymbol{e}_{n+1}\boldsymbol{e}^{\top}_{n+1} \mid \boldsymbol{\Gamma}_{n} = \boldsymbol{x}\right) &= \mathbb{E}\left( \boldsymbol{\widetilde{X}}_{n+1}\boldsymbol{\widetilde{X}}^{\top}_{n+1} \Big| \boldsymbol{\Gamma}_{n} = \boldsymbol{x}\right) - 
\boldsymbol{H}\left(\boldsymbol{x}\right)\left(\boldsymbol{H}\left(\boldsymbol{x}\right)\right)^{\top}
        \\ &= \sum_{i=1}^{r}\mathcal{P}_i\left(\boldsymbol{x}\right)\boldsymbol{\Sigma}^{(\pi_i)}
        -  \boldsymbol{H}\left(\boldsymbol{x}\right)\left(\boldsymbol{H}\left(\boldsymbol{x}\right)\right)^{\top}.
\end{align*}
As $\mathbb{E}\left(\|\boldsymbol{e}_{n+1}\|^2 \mid \boldsymbol{\Gamma}_{n} = \boldsymbol{x}\right) = \operatorname{tr} \mathbb{E}\left(\boldsymbol{e}_{n+1}\boldsymbol{e}^{\top}_{n+1} \mid \boldsymbol{\Gamma}_{n} = \boldsymbol{x}\right)$,
we have 
\begin{align*}%
\sigma^2\left(\boldsymbol{x}\right) &= \operatorname{tr} \boldsymbol{\Sigma} (\boldsymbol{x}) 
 = \sum_{i=1}^{r}\mathcal{P}_i\left(\boldsymbol{x}\right)\operatorname{tr}\boldsymbol{\Sigma}^{(\pi_i)} - \left(\boldsymbol{H}\left(\boldsymbol{x}\right)\right)^{\top}\boldsymbol{H}\left(\boldsymbol{x}\right).
  \end{align*}}
Further, 
$\sigma^2\left(\boldsymbol{x}\right) \leq \sum_{i=1}^{r}\mathcal{P}_i\left(\boldsymbol{x}\right)\operatorname{tr}\boldsymbol{\Sigma}^{(\pi_i)} 
  \leq \mathbb{E}\left(\|\boldsymbol{Y}_1\|^2\right)$. %

{Also, for all $\boldsymbol{x} \in {D}_s$,
\begin{align*}
    \|\boldsymbol{\gamma}\left(\boldsymbol{x}\right)\|^2 = \|\boldsymbol{x}-\boldsymbol{H}(\boldsymbol{x})\|^2 \leq 2\left(\|\boldsymbol{x}\|^2 + \|\boldsymbol{H}(\boldsymbol{x})\|^2\right) \leq 2\|\boldsymbol{x}\|^2 + 2 \|\boldsymbol{\mu}\|^2.
\end{align*}
Thus, for all $\boldsymbol{x} \in {D}_s$, we get a constant $C > 0$ such that
\[
  \mathbb{E}\left(\|\boldsymbol{\gamma}(\boldsymbol{\Gamma}_n) + \boldsymbol{e}_{n+1}\|^2 \mid \boldsymbol{\Gamma}_n = \boldsymbol{x}\right) \leq \|\boldsymbol{\gamma}\left(\boldsymbol{x}\right)\|^2 +  \sigma^2\left(\boldsymbol{x}\right) \leq C(1+\|\boldsymbol{x}\|^2).
\]}%
{The fact that $\boldsymbol{\Gamma}_n \stackrel{a.s.}{\to} \boldsymbol{x}_0$ and the continuity of $\boldsymbol{\mathcal{P}}$ at $\boldsymbol{x}_0$ imply $\boldsymbol{\Sigma}\left(\boldsymbol{\Gamma}_n\right) \stackrel{a.s.}{\to} \boldsymbol{\Sigma}\left(\boldsymbol{x}_0\right)$. Since $\operatorname{tr}$ is continuous operator, $\boldsymbol{\Sigma}\left(\boldsymbol{\Gamma}_n\right) \stackrel{a.s.}{\to} \boldsymbol{\Sigma}\left(\boldsymbol{x}_0\right)$ implies ${\sigma^2}\left(\boldsymbol{\Gamma}_n\right) \stackrel{a.s.}{\to} {\sigma^2}\left(\boldsymbol{x}_0\right)$.}

Now fix $\delta > 0$. Then, using \eqref{eq:e-bound},~\eqref{eq:e-bound2} and finite second moment of $\|\boldsymbol{Y}_1\|$, we get,
\begin{align*}%
&\frac{1}{n}\sum_{k=1}^{n}\mathbb{E}\left( \|\boldsymbol{e}_{k+1}\|^2 \mathbf{1}\left\{\|\boldsymbol{e}_{k+1}\| \geq \delta\sqrt{n}\right\}\mid \mathcal{G}_{k}\right) \nonumber
\\ \leq& \frac{2}{n}\sum_{k=1}^{n}\mathbb{E}\left( \left(\|\boldsymbol{\mu}\|^2 + \|\boldsymbol{Y}_{k+1}\|^2\right) \mathbf{1}\left\{\|\boldsymbol{\mu}\| + \|\boldsymbol{Y}_{k+1}\| \geq \delta\sqrt{n}\right\}\mid \mathcal{G}_{k}\right) \nonumber
\\ =& 2\mathbb{E}\left( \left(\|\boldsymbol{\mu}\|^2 + \|\boldsymbol{Y}_{1}\|^2\right) \mathbf{1}\left\{\|\boldsymbol{\mu}\| + \|\boldsymbol{Y}_{1}\| \geq \delta\sqrt{n}\right\}\right) \to 0. \nonumber
\end{align*}
{The assumption that $\mathbb{E}\left(\|\boldsymbol{Y}_1\|^{2+\epsilon}\right) < \infty$, combined with \eqref{eq:e-bound}, prove \eqref{eq:sup}.}
\end{proof}

\begin{remark}
    When $s=1$, from Remark~\ref{rem:new2}, we have $H(x) = \mathcal{P}_1(x)\mu$. In this case, again from Remark~\ref{rem:new2}, 
    \begin{align}\label{eq:new1}
        \sigma^2(x) = \mathcal{P}_1(x)\Sigma - H(x)^2 = H(x)\mu^{-1}\Sigma - H(x)^2 (\geq 0).
    \end{align}
\end{remark}

\section{Proofs of the Main Results}\label{sec:proofs} 

Finally we provide the proofs of all the main results in this section. The proofs heavily use the notations defined in Section~\ref{sec:ldrw} (in particular, Sections~\ref{subsec:model-ldrw} and~\ref{subsec:results-ldrw}), Section~\ref{sec:gerw} (in particular, Sections~\ref{subsec:model-gerw} and~\ref{sec:main-results}) and Section~\ref{sec:sa} (in particular, Sections~\ref{sec:sa-ldrw-gerw} and~\ref{sec:proofs-lemmas}).

\subsection{Proofs of the Main Results of Section~\ref{sec:ldrw}}\label{sec:proofs-ldrw}

We first prove the results from Section~\ref{sec:ldrw}. They will follow from the results of Section~\ref{sec:gerw}, which we prove in Section~\ref{sec:proofs-gerw} independently. Towards this end, we summarize some properties of Model~\ref{eg:ldrw} in the following Proposition, whose proof is immediate.

\begin{proposition}\label{prop:ldrw}
    The one-dimensional generalized elephant random walk can be parameterized as a multidimensional generalized elephant random walk as described in Model~\ref{eg:ldrw}. 
    Thus, in this case, 
    \begin{enumerate}
        \item[a)] $s = d = 1$, $r = 2$, $\boldsymbol{\Pi}^1_2 = \{\{1\},\emptyset\}$, $A = 2$ and $b = -1$.
        \item[b)] ${Y}_1 \stackrel{d}{=} \delta_{{1}}$, %
        $\mu = \mathbb{E}\left({Y}_1\right) = 1$ and $\Sigma = \mathbb{E}\left(Y_1Y_1^{\top}\right) = \mathbb{E}\left(Y_1^2\right) = 1$.
        \item[c)] ${\mathcal{P}_1} = h$, where $h$ is given by \eqref{eq:h} and so $D_s = D_1 = (0,1)$ and ${H}: (0,1) \to (0,1)$ is given by
\begin{align*}
{H}(x) &= \mathcal{P}_1({x}){\mu}^{(\pi_1)} = h(x).
\end{align*} 
\end{enumerate}    
\end{proposition}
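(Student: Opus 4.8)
The plan is to verify directly that the dynamics of the one-dimensional generalized elephant random walk from Section~\ref{subsec:model-ldrw} coincide with those of the multidimensional model of Section~\ref{subsec:model-gerw} under the proposed identification, and then to read off $\boldsymbol{\mu}$, $\boldsymbol{\Sigma}$ and $\boldsymbol{H}$ from their definitions. As both walks are one-dimensional, there is no genuine analytic content: the proof is a coordinate-by-coordinate matching of the two transition rules, the only subtlety being the correct handling of the projection notation \eqref{eq:not1} and of the empty block $\pi_r = \emptyset$.

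First I would set $\widetilde{S}_n := V_n = \#\{1 \le i \le n : X_i = +1\}$ and choose $s = d = 1$, $r = 2$, $Y_n \equiv 1$ (so $Y_1 \sim \delta_1$), $\boldsymbol{\Pi}^1_2 = \{\pi_1, \pi_2\}$ with $\pi_1 = \{1\}$ and $\pi_2 = \emptyset$, and $\mathcal{P}_1 = h$ for $h$ as in \eqref{eq:h} (so that $\mathcal{P}_2 = \mathcal{P}_r = 1 - h$). Under these choices the conventions of \eqref{eq:not1} give $Y^{(\pi_1)}_{n+1} = Y_{n+1} = 1$ and $Y^{(\pi_2)}_{n+1} = Y^{(\emptyset)}_{n+1} = 0$, so \eqref{eq12} degenerates to the scalar rule $\widetilde{X}_{n+1} = 1$ with probability $\mathcal{P}_1(V_n/n) = h(V_n/n)$ and $\widetilde{X}_{n+1} = 0$ with probability $1 - h(V_n/n)$.

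Next I would confirm that this recovers the original increments. Since $V_{n+1} - V_n = \mathbf{1}\{X_{n+1} = +1\}$, the event $\{\widetilde{X}_{n+1} = 1\}$ matches $\{X_{n+1} = +1\}$, and the identity $h(x) = pf(x) + (1-p)(1-f(x))$ contained in \eqref{eq:h} shows that $h(V_n/n)$ is precisely the conditional probability of $X_{n+1} = +1$ prescribed in \eqref{eq2}. Hence $V_n$ evolves exactly as the auxiliary walk $\widetilde{S}_n$. The affine relation \eqref{eq22} is then the elementary identity $S_n = V_n - W_n = 2V_n - n$, i.e. $A\widetilde{S}_n + nb$ with $A = 2$ and $b = -1$, establishing part (a); the initial step is absorbed into $\widetilde{X}_1 = V_1$, a $\{0,1\}$-valued random variable, with the drift functions $\mathcal{P}_1 = h$ and $H$ considered on $D_1 = (0,1)$.

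Finally I would compute the induced quantities. As $Y_1 = 1$ almost surely, every moment is finite, $\mu = \E(Y_1) = 1$ and $\Sigma = \E(Y_1^2) = 1$, which is part (b). For part (c), substituting $\mathcal{P}_1 = h$, $\mu^{(\pi_1)} = 1$ and $\mu^{(\pi_2)} = 0$ into the definition \eqref{eq:H} of $\boldsymbol{H}$ (with the $\pi_r = \emptyset$ convention dropping the second summand) yields $H(x) = \mathcal{P}_1(x)\,\mu^{(\pi_1)} = h(x)$ on $D_1 = (0,1)$. I expect no real obstacle here; what demands the most care is simply interpreting the projections $Y^{(\{1\})}$ and $Y^{(\emptyset)}$ and the last-block convention correctly, together with the probability rewriting recorded in \eqref{eq:h}.
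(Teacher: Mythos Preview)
Your proposal is correct. The paper does not actually supply a proof of this proposition: it is stated as a summary of ``immediate properties'' following Model~\ref{eg:ldrw}, which already records the identification $\widetilde{S}_n = V_n$ and the parameter choices $s=d=1$, $r=2$, $Y_1\sim\delta_1$, $\boldsymbol{\Pi}^1_2=\{\{1\},\emptyset\}$, $A=2$, $b=-1$, $\mathcal{P}_1=h$ with the phrase ``one can easily show''. Your write-up simply fills in the routine verification the paper leaves implicit, and your handling of the projection conventions $Y^{(\{1\})}=1$, $Y^{(\emptyset)}=0$ and of the identity $h(x)=pf(x)+(1-p)(1-f(x))$ from \eqref{eq:h} is exactly what is needed.
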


\begin{proof}[Proof of Theorem~\ref{thm:slln}]
By Proposition~\ref{prop:ldrw}, we have $\mathbb{E}\left(\|{Y}_1\|^{2}\right) < \infty$ and ${H} = h$. So ${x}_0 = y_0 = (s_0+1)/2$ is the unique fixed point of ${H}$. Also, \eqref{eq3} implies that \eqref{eq33} is satisfied with $D_1 = (0,1)$, $H = h$ and $x_0 = y_0$. With $A = 2$ and $b = -1$, \eqref{eq:asconv1} of Theorem~\ref{thm:slln-gerw} implies \eqref{eq:ldrw-conv}. This completes the proof.
\end{proof}

\begin{proof}[Proof of Theorem~\ref{thm:lil}] 
By Proposition~\ref{prop:ldrw}, we get $s=1$, $\mathbb{E}\left({Y}_1^{2+\epsilon}\right) < \infty$ for all $\epsilon > 0$, ${H} = h$, $A = 2$ and $b = -1$. As the assumptions of Theorem~\ref{thm:slln} are satisfied, %
the assumptions of Theorem~\ref{thm:slln-gerw} hold with $x_0 = y_0 = (s_0+1)/2$. Also $\tau = H'(x_0) = h'((s_0+1)/2) = \eta$ implies Assumption~\ref{clt:assump1} (equivalently, Remark~\ref{rem:clt:assump1-1d}) hold with $\tau \leq 1/2$. In case $\eta < 1/2$, we have $\tau < 1/2$ and 
\begin{align}\label{eq:n1}
   A\sqrt{\frac{x_0\mu^{-1}{\Sigma}-x^2_0}{1-2\tau}} = 2\sqrt{\frac{\frac{s_0+1}{2} -  \left(\frac{s_0+1}{2}\right)^2}{1-2\eta}} = \sqrt{\frac{1-s^2_0}{1-2\eta}}.
\end{align}

Consequently, \eqref{eq:lil-gerw1} of Theorem~\ref{thm:lil-gerw-cric} and \eqref{eq:n1} imply \eqref{eq:lil-ldrw1}. Similarly, in case $\eta = 1/2$, we have $\tau = 1/2$. In this case, as $h$ is twice differentiable at at $(s_0+1)/2$, ${H}$ is also twice differentiable at ${x}_0$. Also,
\begin{align}\label{eq:n2}
   A\sqrt{{x_0\mu^{-1}{\Sigma}-x^2_0}} = 2\sqrt{{\frac{s_0+1}{2} -  \left(\frac{s_0+1}{2}\right)^2}} = \sqrt{{1-s^2_0}}.
\end{align}
Thus, we get \eqref{eq:lil-ldrw2} from \eqref{eq:lil-gerw2} of Theorem~\ref{thm:lil-gerw-cric} and \eqref{eq:n2}, completing the proof.
\end{proof}

\begin{proof}[Proof of Theorem~\ref{thm:clt}] 
As the assumptions of Theorem~\ref{thm:slln} are satisfied, %
the assumptions of Theorem~\ref{thm:slln-gerw} hold with $H = h$ and $x_0 = y_0 = (s_0+1)/2$. Consequently, $\tau = H'(x_0) = h'((s_0+1)/2) = \eta$ and so Assumption~\ref{clt:assump1} (equivalently, remark~\ref{rem:clt:assump1-1d}) are satisfied with $\tau = \eta < 1$ and $\kappa = 1$. By Proposition~\ref{prop:ldrw}, $A = 2$, $b = -1$, $\Sigma = 1$ and $\mathcal{P}_1(x_0) = h(y_0) = y_0 = (s_0+1)/2$. So, 
\[{\Sigma}_0 = \mathcal{P}_1({x}_0){\Sigma}^{(\pi_1)}
   - {x}^2_0 = \frac{s_0+1}{2} -  \left(\frac{s_0+1}{2}\right)^2 = \frac{1 - s_0^2}{4}. 
\]
Thus, in case $\eta < 1/2$, we have $\tau < 1/2$, and in \eqref{eq:sigma1},
\begin{align}\label{eq:m1}
   \Sigma_1 = \int_{0}^{\infty}{\Sigma}_0e^{\left(2\eta - 1\right)u}du = \frac{1 - s_0^2}{4(1-2\eta)}.
\end{align}

Consequently, \eqref{eq:clt-ldrw1} is implied by \eqref{eq:diffconv} of Theorem~\ref{thm:clt-gerw} and \eqref{eq:m1}. When $\eta \geq 1/2$, twice differentiability of $h$ at $(s_0+1)/2$ implies Assumption~\ref{clt:assump2} is 
satisfied. In case $\eta = 1/2$, we have $\tau = 1/2$, and in \eqref{eq:sigma2},
\begin{align}\label{eq:m2}
   \Sigma_2 = \lim_{n \to \infty} \frac{1}{\log n}\int_{0}^{\log n}{\Sigma}_0e^{\left(2\eta - 1\right)u}du = \frac{1 - s_0^2}{4}.
\end{align}
Consequently, \eqref{eq:clt-ldrw2} is implied by \eqref{eq:cricconv} of Theorem~\ref{thm:clt-gerw} and \eqref{eq:m2}. In case $1/2 < \eta < 1$, we have $1/2 < \tau < 1$ and we get \eqref{eq:clt-ldrw3} using \eqref{eq:sdiffconv} of Theorem~\ref{thm:clt-gerw} (equivalently, \eqref{eq:re-eig1} of Remark~\ref{rem:sd}) with $L$ given by \begin{align}\label{eq:L}L = A{\xi} = 2\xi.\end{align} This completes the proof.
\end{proof}

\begin{proof}[Proof of Theorem~\ref{thm:super-dev1}] 
By Proposition~\ref{prop:ldrw}, we have $s=1$, $\mathbb{E}\left({Y}_1^{2+\epsilon}\right) < \infty$ for all $\epsilon > 0$, ${H} = h$, $A = 2$ and $b = -1$. 
As the assumptions of Theorem~\ref{thm:slln} are satisfied, %
the assumptions of Theorem~\ref{thm:slln-gerw} hold with $x_0 = y_0 = (s_0+1)/2$. 
Also $\tau = H'(x_0) = h'((s_0+1)/2) = \eta$ implies Assumption~\ref{clt:assump1} (equivalently, Remark~\ref{rem:clt:assump1-1d}) hold with $1/2 < \tau = \eta < 1$. As $h$ is $(m+1)$-times differentiable around $(s_0+1)/2$, $H$ is $(m+1)$-times differentiable around $x_0$. Also, \begin{align}\label{eq:lim-var}A\sqrt{\frac{x_0\mu^{-1}{\Sigma}-x^2_0}{2\tau-1}} = 2\sqrt{\frac{\frac{s_0+1}{2} -  \left(\frac{s_0+1}{2}\right)^2}{2\eta-1}} = \sqrt{\frac{1-s^2_0}{2\eta-1}}.\end{align} 

Thus, in case $m \geq (\eta - 1/2)/(1-\eta) = (\tau - 1/2)/(1-\tau)$, \eqref{eq:slil-ldrw} (respectively, \eqref{eq:sclt-ldrw}) is obtained from \eqref{eq:slil-gerw} (respectively, \eqref{eq:sclt-gerw}) of Theorem~\ref{thm:super-dev1m} and \eqref{eq:lim-var}, with $m_0 = \lfloor(\tau - 1/2)/(1-\tau)\rfloor = \lfloor(\eta - 1/2)/(1-\eta)\rfloor$, $L$ given by \eqref{eq:L} and $\{\beta_j\}_{j=1}^{m+1}$ is recursively given by \begin{align}\label{eq:beta}\beta_1 = \frac{b_1}{A^0} = 1, &\beta_{j+1} = \frac{b_{j+1}}{A^j} =-\frac{1}{A^jj(1-\tau)}\sum_{p=2}^{j+1}\frac{H^{(p)}(x_0)}{p!}\sum_{(c_1, \ldots, c_p) \in \mathcal{P}_{p,j+1}}\nu_{(c_1, \ldots, c_p)}b_{c_1}\ldots b_{c_p} \\&= -\frac{1}{j(1-\eta)}\sum_{p=2}^{j+1}\frac{h^{(p)}\left(\frac{s_0+1}{2}\right)}{2^{p-1}p!}\sum_{(c_1, \ldots, c_p) \in \mathcal{P}_{p,j+1}}\nu_{(c_1, \ldots, c_p)}\beta_{c_1}\ldots \beta_{c_p}, j = 1, \ldots, m.\nonumber\end{align} 
where, for $1 \leq p \leq t$, $\mathcal{P}_{p,t}$ and $\nu_{(c_1, \ldots, c_p)}$ are given by \eqref{eq:combn}.
Similarly, in case $m < (\eta - 1/2)/(1-\eta) = (\tau - 1/2)/(1-\tau)$, \eqref{eq:s-ldrw} is obtained from \eqref{eq:s-gerw} of Theorem~\ref{thm:super-dev1m}, with $L$ given by \eqref{eq:L} and $\beta_j$, $j = 0, \ldots, m$ given by \eqref{eq:beta}. 
\end{proof}

\subsection{Proofs of the Main Results of Section~\ref{sec:gerw}}\label{sec:proofs-gerw}

We now prove the main results of the model given in Section~\ref{sec:gerw}. We prove these results independently of the results from Section~\ref{sec:ldrw}.

\begin{proof}[Proof of Theorem~\ref{thm:slln-gerw}]
Note that there exists a unique root $\boldsymbol{x}_0 \in {D}_s$ of the equation $\boldsymbol{\gamma}(\boldsymbol{x}) = \boldsymbol{0}$ and \eqref{eq33} implies that for all 
$\epsilon > 0$ 
\begin{align}\label{eq-relaxcontpf2}
     \sup_{\epsilon \leq \|\boldsymbol{x} - \boldsymbol{x}_0\| \leq \frac{1}{\epsilon}}  \left(\boldsymbol{x} - \boldsymbol{x}_0\right)^{\top}\left(-\boldsymbol{\gamma}(\boldsymbol{x})\right) < 0.
\end{align}
In view of \eqref{eq-sa2}, \eqref{eq-relaxcontpf2} and Lemma~\ref{lem-noiseanddrift2}, $\{\boldsymbol{\Gamma}_n\}_{n \geq 1}$ satisfies all the assumptions of Theorem 1 of Section 5 of \cite{MR1082341} (Part II), using which we conclude that almost surely $\boldsymbol{\Gamma}_{n}$ converges to $\boldsymbol{x}_0$. Consequently, we get that almost surely ${\boldsymbol{S}_n}/{n} = \boldsymbol{A}\boldsymbol{\Gamma}_{n} + \boldsymbol{b}$ converges to $\boldsymbol{A}\boldsymbol{x}_0 + \boldsymbol{b}$, completing the proof.
\end{proof}

\begin{proof}[Proof of Theorem~\ref{thm:clt-gerw}] Since the assumptions of Theorem~\ref{thm:slln-gerw} hold, almost surely $\boldsymbol{\Gamma}_{n}$ converges to $\boldsymbol{x}_0$ which is as described in Theorem~\ref{thm:slln-gerw}. Note that Assumption~\ref{clt:assump1} imply that $\boldsymbol{\gamma}$ is differentiable at $\boldsymbol{x}_0$ and all the eigenvalues of $J_{\boldsymbol{\gamma}}\left(\boldsymbol{x}_0\right) = \mathbb{I}_s - J_{\boldsymbol{H}}\left(\boldsymbol{x}_0\right)$ have positive real parts. Thus,
\begin{align*}
       \rho := \min \left\{\operatorname{Re}\left(\zeta\right) : \zeta \in \{1 - \lambda_1, \ldots, 1 - \lambda_l\} \right\} = 1 - \tau > 0.
\end{align*}
So, $\boldsymbol{\gamma}$ and its zero $\boldsymbol{x}_0$ satisfy Assumption 2.1 of \cite{zhang2016central}. 
From \eqref{eq:lem-Sigma} and \eqref{eq:lem-Sigmaconv}, we get
\begin{align}\label{eq:Sigma-proof}
       \frac{1}{n}\sum_{k=1}^{n}\mathbb{E}\left(\boldsymbol{e}_{k+1}\boldsymbol{e}^{\top}_{k+1} \mid \boldsymbol{\Gamma}_{k}\right) \stackrel{a.s.}{\to} \boldsymbol{\Sigma}\left(\boldsymbol{x}_0\right)
\end{align}

As $\boldsymbol{H}\left(\boldsymbol{x}_0\right) = \boldsymbol{x}_0$, we have that $\boldsymbol{\Sigma}\left(\boldsymbol{x}_0\right) = \boldsymbol{\Sigma}_0$. Thus, \eqref{eq:lind} and \eqref{eq:Sigma-proof} imply that $\{\boldsymbol{e}_{n}\}_{n\geq2}$ satisfies Assumption 2.3 of \cite{zhang2016central}. In case $\tau < 1/2$, we have $\rho > 1/2$. Hence, in view of \eqref{eq-sa2}, $\{\boldsymbol{\Gamma}_n\}_{n \geq 1}$ satisfies all the assumptions of Theorem 2.3 of \cite{zhang2016central} (with the remainder sequence $\left\{\boldsymbol{r}_{n+1}\right\}_{n \geq 1}$ of \cite{zhang2016central} being sequence of all zeros), which implies, 
\begin{align}\label{eq:diff-conv-proof}
    \sqrt{n}\left(\boldsymbol{\Gamma}_{n} - \boldsymbol{x}_0\right) \stackrel{d}{\to} N\left(0,\boldsymbol{\Sigma}_1\right).
\end{align}
When $\tau \geq 1/2$, Assumption~\ref{clt:assump2} imply  
\begin{align*}
   \boldsymbol{\gamma}(\boldsymbol{x}) = \boldsymbol{\gamma}(\boldsymbol{x}_0) + J_{\boldsymbol{\gamma}}\left(\boldsymbol{x}_0\right)%
   \left(\boldsymbol{x} - \boldsymbol{x}_0\right)  + o\left(\|\boldsymbol{x} - \boldsymbol{x}_0\|^{1+\delta}\right) \quad \text{as } \boldsymbol{x} \to \boldsymbol{x}_0,
\end{align*}
and thus Assumption 2.2 of \cite{zhang2016central} is satisfied.
In case $\tau = 1/2$, we have $\rho = 1/2$. 
Hence, in view of \eqref{eq-sa2}, $\{\boldsymbol{\Gamma}_n\}_{n \geq 1}$ satisfies all the assumptions of Theorem 2.1 of \cite{zhang2016central} (with $\left\{\boldsymbol{r}_{n+1}\right\}_{n \geq 1}$ of \cite{zhang2016central} being sequence of all zeros), which implies, 
\begin{align}\label{eq:cric-conv-proof}
    \frac{\sqrt{n}}{(\log n)^{\kappa - 1/2}}\left(\boldsymbol{\Gamma}_{n} - \boldsymbol{x}_0\right) \stackrel{d}{\to} N\left(0,\boldsymbol{\Sigma}_2\right).
\end{align}

Now observe that \eqref{eq:lem-Sigma} and \eqref{eq:lem-Sigmaconv} imply that 
\[
       \sum_{k=1}^{n}\mathbb{E}\left(\boldsymbol{e}_{k+1}\boldsymbol{e}^{\top}_{k+1} \mid \boldsymbol{\Gamma}_{k}\right) = \operatorname{O}\left(n\right) \quad \text{a.s.},
    \]
and, consequently, $\{\boldsymbol{e}_{n}\}_{n\geq2}$ satisfies (2.10) of \cite{zhang2016central}. In case $\tau > 1/2$,  we have $\rho < 1/2$. Hence, in view of \eqref{eq-sa2}, $\{\boldsymbol{\Gamma}_n\}_{n \geq 1}$ satisfies all the assumptions of Theorem 2.2 of \cite{zhang2016central} (with $\left\{\boldsymbol{r}_{n+1}\right\}_{n \geq 1}$ of \cite{zhang2016central} being sequence of all zeros). Hence we conclude there exist random variables ${\xi}_1, {\xi}_2, \ldots$ such that
\begin{align}\label{eq:sdiff-conv-proof}
    \frac{n^{1-\tau}}{(\log n)^{\kappa - 1}}\left(\boldsymbol{\Gamma}_{n} - \boldsymbol{x}_0\right) - \sum_{j : \operatorname{Re}\left(\lambda_j\right) = \tau, \kappa_j = \kappa}e^{\iota\operatorname{Im}\left(\lambda_j\right)\log n}{\xi}_j\boldsymbol{R}^*_{j} \stackrel{a.s.}{\to} 0. %
\end{align}
As ${\boldsymbol{S}_n}/{n} = \boldsymbol{A}\boldsymbol{\Gamma}_{n} + \boldsymbol{b}$, \eqref{eq:diff-conv-proof} (respectively, \eqref{eq:cric-conv-proof}, \eqref{eq:sdiff-conv-proof}) implies \eqref{eq:diffconv} (respectively, \eqref{eq:cricconv}, \eqref{eq:sdiffconv}). This completes the proof.
\end{proof}

\begin{proof}[{Proof of Theorem~\ref{thm:lil-gerw-cric}}] Since the assumptions of Theorem~\ref{thm:slln-gerw} hold, almost surely ${\Gamma}_{n}$ converges to ${x}_0$ which is as described in Theorem~\ref{thm:slln-gerw}. Also, Assumption~\ref{clt:assump1} (in particular, Remark~\ref{rem:clt:assump1-1d}) imply that ${\gamma}$ is differentiable in a neighborhood of ${x}_0$ and ${\gamma}'({x}_0) = 1 - \tau \geq 1/2$. Moreover, as $H(x_0) = x_0$, from \eqref{eq:new1},%
it follows that
\begin{align}\label{eq:sigma-proof}
    \sigma^2\left({x}_0\right) &= %
    x_0\mu^{-1}\Sigma - x_0^2. %
\end{align} So, in case of $\tau < 1/2$, we have ${\gamma}'({x}_0) > 1/2$. Therefore, in view of \eqref{eq-sa2} and Lemma~\ref{lem-noiseanddrift2}, %
$\{{\Gamma}_n\}_{n \geq 1}$ satisfies all the assumptions of part a) of Theorem~\ref{thm:sa1}. As ${{S}_n}/{n} = {A}{\Gamma}_{n} + {b}$, \eqref{eq:lil-gerw1} follows from part a) of Theorem~\ref{thm:sa1}.

In case of $\tau = 1/2$, we have ${\gamma}'({x}_0) = 1/2$. Also, as ${H}$ is twice differentiable at ${x}_0$ in this case, so is $\gamma$. Hence, in view of \eqref{eq-sa2} and Lemma~\ref{lem-noiseanddrift2}, %
$\{{\Gamma}_n\}_{n \geq 1}$ satisfies all the assumptions of part b) of Theorem~\ref{thm:sa1}, which, along with the fact that ${{S}_n}/{n} = {A}{\Gamma}_{n} + {b}$, imply \eqref{eq:lil-gerw2}.
\end{proof}

\begin{proof}[{Proof of Theorem~\ref{thm:super-dev1m}}] Since the assumptions of Theorem~\ref{thm:slln-gerw} hold, almost surely ${\Gamma}_{n}$ converges to ${x}_0$ which is as described in Theorem~\ref{thm:slln-gerw}. Assumption~\ref{clt:assump1} (in particular, Remark~\ref{rem:clt:assump1-1d}) imply that ${\gamma}$ is differentiable around ${x}_0$ and $0 < {\gamma}'({x}_0) = 1 - \tau < 1/2$. Also, $\gamma$ is $(m+1)$-times differentiable around $x_0$. If $m \geq (\tau-1/2)/(1-\tau)$, there exists $l \in \mathbb{N}\cup\{0\}$ such that 

\[ \frac{\tau-1/2}{1-\tau} + l \leq m < \frac{\tau-1/2}{1-\tau} + l + 1
\] In case 
\[\frac{\tau-1/2}{1-\tau} + l < m < \frac{\tau-1/2}{1-\tau} + l + 1, \text{ we have }  \frac{1}{2(m-l + 1)}< {\gamma}'({x}_0) < \frac{1}{2(m-l)}.\]

In view of \eqref{eq-sa2} and Lemma~\ref{lem-noiseanddrift2}, %
$\{{\Gamma}_n\}_{n \geq 1}$ satisfies all the assumptions of Theorem~\ref{thm:A2} %
(with $k = m-l-1$). Thus \eqref{eq:suppwc} implies that
\begin{align}\label{eq:sdiffclt-pf1}
    \sqrt{n}\left(\left({\Gamma}_{n} - {x}_0\right) - \sum_{j = 0}^{m-l-1} b_{j+1}\left(\frac{{\xi}}{n^{{\gamma}'({x}_0)}}\right)^{j+1}\right) \stackrel{d}{\to} {N}\left(0, \frac{{\sigma}^2(x_0)}{1 - 2{\gamma}'({x}_0)}\right),
\end{align}
and \eqref{eq:supplil} implies that %
almost surely,
\begin{align}\label{eq:sdifflil-pf1}
    &\limsup_{n\to\infty}\sqrt{\frac{n}{2\log \log n}}\left(\left({\Gamma}_{n} - {x}_0\right) - \sum_{j = 0}^{m-l-1} b_{j+1}\left(\frac{{\xi}}{n^{{\gamma}'({x}_0)}}\right)^{j+1}
    \right) \nonumber\\
    =-&\liminf_{n\to\infty}\sqrt{\frac{n}{2\log \log n}}\left(\left({\Gamma}_{n} - {x}_0\right) - \sum_{j = 0}^{m-l-1} b_{j+1}\left(\frac{{\xi}}{n^{{\gamma}'({x}_0)}}\right)^{j+1} \right)
   = \sqrt{\frac{{\sigma}^2(x_0)}{1 - 2{\gamma}'({x}_0)}}, 
\end{align}
where, $\sigma^2(x_0)$ is given by \eqref{eq:sigma-proof}, and %
the sequence $\{b_j\}_{j=1}^{m+1}$ is recursively given by 
\begin{align}\label{eq:coeff}b_1 = 1, \quad b_{j+1} = \frac{1}{j\gamma'(x_0)}\sum_{p=2}^{j+1}\frac{\gamma^{(p)}(x_0)}{p!}\sum_{(c_1, \ldots, c_p) \in \mathcal{P}_{p,j+1}}\nu_{(c_1, \ldots, c_p)}b_{c_1}\ldots b_{c_p},\end{align}
where, for $1 \leq p \leq t$, $\mathcal{P}_{p,t}$ and $\nu_{(c_1, \ldots, c_p)}$ are given by \eqref{eq:combn}.
In case 
\[ m = \frac{\tau-1/2}{1-\tau} + l, \text{ we have } {\gamma}'({x}_0) = \frac{1}{2(m-l+1)}.\] 

Thus, using \eqref{eq-sa2} and Lemma~\ref{lem-noiseanddrift2} %
we conclude that $\{{\Gamma}_n\}_{n \geq 1}$ satisfies all the assumptions of Theorem~\ref{thm:A2} %
(with $k = m-l$). %
Thus \eqref{eq:suppwc} implies that
\begin{align}\label{eq:sdiffclt-pf2}
    \sqrt{n}\left(\left({\Gamma}_{n} - {x}_0\right) - \sum_{j = 0}^{m-l} b_{j+1}\left(\frac{{\xi}}{n^{{\gamma}'({x}_0)}}\right)^{j+1} \right)
    \stackrel{d}{\to} {N}\left(0, \frac{{\sigma}^2(x_0)}{1-2\gamma'(x_0)}\right),
\end{align}
and \eqref{eq:supplil} implies that, almost surely,
\begin{align}\label{eq:sdifflil-pf2}
    \limsup_{n\to\infty}\sqrt{\frac{n}{2\log \log n}}&\left(\left({\Gamma}_{n} - {x}_0\right) - \sum_{j = 0}^{m-l} b_{j+1}\left(\frac{{\xi}}{n^{{\gamma}'({x}_0)}}\right)^{j+1}
    \right) \nonumber\\
    =-\liminf_{n\to\infty}\sqrt{\frac{n}{2\log \log n}}&\left(\left({\Gamma}_{n} - {x}_0\right) - \sum_{j = 0}^{m-l} b_{j+1}\left(\frac{{\xi}}{n^{{\gamma}'({x}_0)}}\right)^{j+1} \right)
    = \sqrt{\frac{{\sigma}^2(x_0)}{1-2\gamma'(x_0)}}.\hspace{0.75cm}& 
\end{align}
Finally, in case \[m < \frac{\tau-1/2}{1-\tau}, \text{ we have } 0 < {\gamma}'({x}_0) < \frac{1}{2(m+1)}.\] From \eqref{eq-sa2} and Lemma~\ref{lem-noiseanddrift2} %
we see that $\{{\Gamma}_n\}_{n \geq 1}$ satisfies all the assumptions of Theorem~\ref{thm:A1} %
(with $k = m$). Thus \eqref{eq:nn1} implies that%
\begin{align}\label{eq:sconv-proof}
    n^{\gamma'(x_0)(m+1)}\left(\left({\Gamma}_{n} - {x}_0\right) - \sum_{j = 0}^{m-1} b_{j+1}\left(\frac{{\xi}}{n^{{\gamma}'({x}_0)}}\right)^{j+1} \right)
    \stackrel{a.s.}{\to}  b_{m+1}{\xi}^{m+1}  .
\end{align}
As ${{S}_n}/{n} = {A}{\Gamma}_{n} + {b}$ and $\gamma(x) = x - H(x)$, 
\eqref{eq:sdifflil-pf1} and \eqref{eq:sdifflil-pf2} (respectively, \eqref{eq:sdiffclt-pf1} and \eqref{eq:sdiffclt-pf2}, \eqref{eq:sconv-proof}) imply \eqref{eq:slil-gerw} (respectively, \eqref{eq:sclt-gerw}, \eqref{eq:s-gerw}). This completes the proof.
\end{proof}

\section*{Acknowledgements}

We are grateful to Jean Bertoin and Rahul Roy for providing us with the references \cite{ldpir} and \cite{wu2012asymptotics}, respectively.

\providecommand{\bysame}{\leavevmode\hbox to3em{\hrulefill}\thinspace}
\providecommand{\MR}{\relax\ifhmode\unskip\space\fi MR }
\providecommand{\MRhref}[2]{%
  \href{http://www.ams.org/mathscinet-getitem?mr=#1}{#2}
}
\providecommand{\href}[2]{#2}

\end{document}